\title{Quasi-Galois points, II: Arrangements}
\author{Satoru Fukasawa, Kei Miura and Takeshi Takahashi}
\subjclass[2020]{14H50, 14H05, 12F10}
\keywords{plane curve, projection, Galois point, quasi-Galois point}
\address{Faculty of Science, Yamagata University,  
Kojirakawa-machi 1-4-12, Yamagata 990-8560, Japan}
\email{s.fukasawa@sci.kj.yamagata-u.ac.jp} 
\thanks{The work of the first author was partially supported by JSPS KAKENHI Grant Numbers 25800002, JP16K05088 and JP19K03438.} 
\address{Department of Mathematics, National Institute of Technology, Ube College, Ube, Yamaguchi 755-8555, Japan}
\email{kmiura@ube-k.ac.jp}
\thanks{The work of the second author was partially supported by JSPS KAKENHI Grant Numbers 26400057 and JP18K03230.} 
\address{Education Center for Engineering and Technology, Faculty of Engineering, Niigata University, Niigata 950-2181, Japan}
\email{takeshi@eng.niigata-u.ac.jp}
\thanks{The work of the third author was partially supported by JSPS KAKENHI Grant Numbers 25400059, JP16K05094 and JP19K03441.}
\newtheorem{theorem}{Theorem}[section]
\newtheorem{proposition}[theorem]{Proposition}
\newtheorem{corollary}[theorem]{Corollary}
\newtheorem{lemma}[theorem]{Lemma} 
\newtheorem{fact}[theorem]{Fact}
\theoremstyle{definition}
\newtheorem{remark}[theorem]{Remark}
\begin{document}
\begin{abstract}  
In Part I, the present authors introduced the notion of a quasi-Galois point, for investigating the automorphism groups of plane curves.
In this second part, the number of quasi-Galois points for smooth plane curves is described. 
In particular, sextic or quartic curves with many quasi-Galois points are characterized. 
\end{abstract}
\maketitle

\section{Introduction}  
In Part I \cite{fmt}, the present authors introduced the notion of a {\it quasi-Galois point} for a plane curve $C \subset \mathbb{P}^2$, for investigating the automorphism group ${\rm Aut}(C)$ of $C$. 
In this second part, we describe the arrangement of quasi-Galois points. 
It is inferred that quasi-Galois points are useful to classify algebraic curves. 

Let $C \subset \mathbb{P}^2$ be a smooth plane curve of degree $d \ge 4$ over an algebraically closed field $K$ of characteristic zero, and let $P \in \mathbb{P}^2$. 
We define the set $G[P]$ as the group consisting of all birational transformations of $C$ preserving the fibers of the projection $\pi_P$. 
If $|G[P]| \ge 2$, then we say that $P$ is a {\it quasi-Galois point}. 
This is a generalization of the Galois point, which was introduced by Hisao Yoshihara (\cite{fukasawa, miura-yoshihara, yoshihara}).

In this second part, the number of quasi-Galois points for smooth plane curves is determined. 
The number $\delta[n]$ (resp., $\delta'[n]$) of quasi-Galois points $P \in C$ (resp., $P \in \mathbb{P}^2 \setminus C$) with $|G[P]|=n$ is determined explicitly for any $n \ge 3$, in Theorems \ref{inner}, \ref{outer} and \ref{outer n=3}. 
Furthermore, when $d=4$ or $6$, all possibilities of $\delta'[d/2]$ are determined (Theorems \ref{sextic} and \ref{quartic}).

Quasi-Galois points are related to reflections or finite unitary reflection groups in group theory. 
In fact, a generator of the associated group $G[P]$ at a quasi-Galois point is represented by a reflection (see \cite[Theorem 2.3]{fmt} or Fact \ref{standard form}). 
Finite unitary reflection groups are well studied, and were classified in 1950s when such groups are ``irreducible'' (Shephard--Todd \cite{shephard-todd}, \cite[Theorem 8.29]{lehrer-taylor}).  
Proofs of our results do not depend on the results of them. 
Some parts of our proofs of Theorems \ref{outer} and \ref{outer n=3} are related to the methods of Mitchell in \cite[Sections 5 and 6]{mitchell}. 
Our proofs come from the point of view of algebraic geometry.

\section{Preliminaries} 
We introduce the system $(X:Y:Z)$ of homogeneous coordinates on $\mathbb P^2$. 
If $P \in C$, then the (projective) tangent line at $P$ is denoted by $T_PC$. 
For a projective line $\ell \subset \mathbb P^2$ and a point $P \in C \cap \ell$, the intersection multiplicity of $C$ and $\ell$ at $P$ is denoted by $I_P(C, \ell)$.  
The line passing through points $P$ and $Q$ is denoted by $\overline{PQ}$, when $P \ne Q$, and the projection from a point $P \in \mathbb P^2$ by $\pi_P$, which is the rational map from $C$ to $\mathbb P^1$ represented by $Q \mapsto \overline{PQ}$. 
If $Q \in C$, the ramification index of $\pi_P$ at $Q$ is denoted by $e_Q$.   
We note the following elementary fact.  
 
\begin{fact} \label{index}
Let $P \in \mathbb P^2$, and let $Q \in C$. 
Then, for $\pi_P$ we have the following.
\begin{itemize}
\item[(1)] If $P=Q$, then $e_P=I_P(C, T_PC)-1$.  
\item[(2)] If $P \ne Q$, then $e_Q=I_Q(C, \overline{PQ})$.   
\end{itemize} 
\end{fact}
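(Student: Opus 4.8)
The plan is to reduce both assertions to computations of orders of vanishing in the discrete valuation ring $\mathcal{O}_{C,Q}$, which is legitimate because $C$ is smooth. I will use two standard facts freely. First, for a morphism to $\mathbb{P}^1$ the ramification index $e_Q$ of $\pi_P$ at $Q$ equals $\operatorname{ord}_Q(\pi_P^\ast s)$, where $s$ is any local uniformizer of $\mathbb{P}^1$ at $\pi_P(Q)$; in particular $e_Q=1$ exactly when $\pi_P$ is unramified at $Q$. Second, since $C$ is smooth at $Q$, the intersection multiplicity $I_Q(C,\ell)$ of $C$ with a line $\ell=\{L=0\}$ equals $\operatorname{ord}_Q(L|_C)$, the order of vanishing along $C$ of the linear form $L$ read in an affine chart about $Q$. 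Both statements then become a matter of writing $\pi_P^\ast s$ as the restriction of a suitable linear form and reading off its order.

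For (2), I would choose homogeneous coordinates with $P=(1:0:0)$, so that $\pi_P$ is the coordinate projection $(X:Y:Z)\mapsto(Y:Z)$. Writing $Q=(a:b:c)$ with, say, $c\ne 0$, the line $\overline{PQ}$ has equation $cY-bZ=0$ and $\pi_P(Q)=(b:c)$. A uniformizer of $\mathbb{P}^1$ there is $s=(cY-bZ)/(cZ)$, and since $Z$ is a unit at $Q$ one gets $e_Q=\operatorname{ord}_Q(\pi_P^\ast s)=\operatorname{ord}_Q(cY-bZ)=I_Q(C,\overline{PQ})$, as claimed.

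For (1), the key observation is that although $\pi_P$ is a priori undefined at $P$, it extends to a morphism on the smooth curve $C$ sending $P$ to the point of $\mathbb{P}^1$ that corresponds to the tangent direction $T_PC$. I would place $P$ at the origin of an affine chart with $T_PC$ the axis $Y=0$, take a local uniformizer $t$ with $x=t$, and expand $y=y(t)=a_m t^m+\cdots$, where $m=\operatorname{ord}_t(y(t))=I_P(C,T_PC)$ by the second recalled fact. In the pencil coordinate $v=Y/X$ near the tangent direction, $\pi_P$ is $t\mapsto y(t)/t=a_m t^{m-1}+\cdots$, whose order is $m-1$; hence $e_P=I_P(C,T_PC)-1$.

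I expect the only genuine subtlety to lie in (1): one must justify that $\pi_P$ extends across $P$ and identify its value there as the tangent direction, and then account correctly for the single power of $t$ that cancels. Conceptually this cancellation is exactly the fact that, when projecting from a point of the curve, the forced intersection of $C$ with $\overline{PQ}$ at $P=Q$ itself is ``free'' and must be discarded, so that only the excess contact $I_P(C,T_PC)-1$ registers as ramification; this also exhibits (1) as the limiting case $Q\to P$ of (2). The remaining verifications are routine order-of-vanishing bookkeeping.
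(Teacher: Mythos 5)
Your proof is correct. Note that the paper itself offers no argument here: Fact~\ref{index} is stated as ``elementary'' and used without proof, so there is nothing to compare against beyond the standard verification, which is exactly what you give. Both computations check out --- in (2) the pullback of the uniformizer $(cY-bZ)/(cZ)$ has order $I_Q(C,\overline{PQ})$ at $Q$ because $Z$ is a unit there, and in (1) the extension of $\pi_P$ across $P$ via $t\mapsto y(t)/t$ correctly accounts for the cancelled factor of $t$, yielding $e_P=I_P(C,T_PC)-1$; your closing remark that this cancellation reflects the ``free'' intersection of $\overline{PQ}$ with $C$ at $P$ itself is the right conceptual gloss.
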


If $|G[P]| \ge 2$, then the fixed field $K(C)^{G[P]}$ is an intermediate field of $K(C)/\pi_P^*K(\mathbb{P}^1)$, and we have a Galois covering $C \rightarrow C/G[P]$. 

\begin{remark}
The order $|G[P]|$ divides the degree of $\pi_P$. 
\end{remark}

In general, the following fact holds for a Galois covering $\theta:C \rightarrow C'$ with a Galois group $G$ between smooth curves, where $G(P)$ is the stabilizer subgroup of $P$ (see \cite[III. 7.2, 8.2]{stichtenoth}). 
\begin{fact} \label{Galois covering} 
Let $\theta: C \rightarrow C'$ be a Galois covering of degree $d$, and let $G$ be the Galois group. 
Then: 
\begin{itemize}
\item[(1)] The order of $G(P)$ is equal to $e_P$ at $P$ for any point $P \in C$.\item[(2)] Let $P, Q \in C$. If $\theta(P)=\theta(Q)$, then $e_P=e_Q$. 
\end{itemize} 
\end{fact}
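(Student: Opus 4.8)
The plan is to reduce both assertions to a local computation at a point, where the characteristic-zero hypothesis supplies a normal form for the group action. Since $\theta$ is Galois with group $G$, we may identify $C'$ with the quotient $C/G$ and $\theta$ with the quotient morphism; thus for every $P' \in C'$ the fiber $\theta^{-1}(P')$ is a single $G$-orbit. Fixing $P \in C$ with $\theta(P)=P'$, the orbit--stabilizer theorem gives
\[
|\theta^{-1}(P')| = [G:G(P)] = \frac{d}{|G(P)|},
\]
so the fiber is governed entirely by the stabilizer orders, and it remains to relate $|G(P)|$ to $e_P$.

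For part (1), I would pass to the completed local ring $\widehat{\mathcal O}_{C,P} \cong K[[t]]$ with maximal ideal $\mathfrak m=(t)$. The stabilizer $G(P)$ acts on $K[[t]]$ fixing $\mathfrak m$, hence acts on the cotangent space $\mathfrak m/\mathfrak m^2 \cong K$ through a character $\chi \colon G(P) \to K^*$. Because $\mathrm{char}\,K=0$, this character is injective: an element of finite order acting trivially on $\mathfrak m/\mathfrak m^2$ would be tangent to the identity, and iterating shows that its first higher-order deviation is scaled by a nonzero integer, forcing it to vanish. Thus $G(P)$ embeds in $K^*$ and is cyclic, say of order $m$, generated by some $\sigma$ with $\chi(\sigma)=\zeta$ a primitive $m$-th root of unity. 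Averaging then produces a parameter $u=\sum_{i=0}^{m-1}\zeta^{-i}(\sigma^*)^i(t)$ satisfying $\sigma^* u=\zeta u$ and $u \equiv m\,t \pmod{\mathfrak m^2}$, so $u$ is again a uniformizer. Consequently $\widehat{\mathcal O}_{C,P}^{\,G(P)}=K[[u^m]]$, a uniformizer at $P'$ pulls back to a unit times $u^m$, and the very definition of the ramification index yields $e_P=m=|G(P)|$.

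For part (2), suppose $\theta(P)=\theta(Q)$. Then $P$ and $Q$ lie in one $G$-orbit, so $Q=\tau(P)$ for some $\tau \in G$, and the stabilizers are conjugate: $G(Q)=\tau\,G(P)\,\tau^{-1}$. Hence $|G(Q)|=|G(P)|$, and part (1) gives $e_Q=e_P$.

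The main obstacle is the linearization in part (1): establishing that $G(P)$ is cyclic and that a generator can be brought to the diagonal form $t \mapsto \zeta t$. This is exactly the step that uses $\mathrm{char}\,K=0$ in two essential ways --- for the injectivity of $\chi$ (no nontrivial finite-order automorphism is tangent to the identity) and for the averaging construction of the eigen-uniformizer $u$. Once this normal form is secured, the fundamental relation $e_P=|G(P)|$ and the conjugacy argument for (2) follow with no further difficulty.
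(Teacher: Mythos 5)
Your proof is correct, but it is worth noting that the paper does not prove this statement at all: it is quoted as a known Fact with a citation to Stichtenoth \cite{stichtenoth}, III.7.2 and III.8.2, where the result is established valuation-theoretically (in a Galois extension of function fields all places above a given place are conjugate, and the decomposition group has order $ef$; over an algebraically closed constant field $f=1$, giving $|G(P)|=e_P$). Your self-contained argument takes the geometric route instead: identify $C'$ with $C/G$, linearize the stabilizer action on $\widehat{\mathcal O}_{C,P}\cong K[[t]]$, and read off $e_P$ from the eigen-uniformizer $u$ with $\sigma^*u=\zeta u$. This is a legitimate and instructive alternative, with two caveats. First, your argument genuinely needs ${\rm char}\,K=0$ (both for injectivity of the character $\chi$ and for the averaging), whereas the valuation-theoretic statement survives wild ramification in positive characteristic, where $G(P)$ need not be cyclic and no such linearization exists; since the paper assumes characteristic zero throughout, this costs nothing here. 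Second, you silently use that $\widehat{\mathcal O}_{C',P'}$ is exactly the invariant ring $\widehat{\mathcal O}_{C,P}^{\,G(P)}=K[[u^m]]$ (rather than a proper subring), which is what guarantees that a uniformizer at $P'$ pulls back with valuation exactly $m$ and not a higher multiple; this is standard for quotients by finite groups but deserves a sentence. A slicker organization, which avoids the local normal form entirely, is to prove (2) first --- $e_{\tau(P)}=e_P$ for any $\tau\in G$ simply because $\theta\circ\tau=\theta$ and $\tau$ is an automorphism --- and then deduce (1) from the degree formula $\sum_{Q\in\theta^{-1}(P')}e_Q=d$ combined with your orbit--stabilizer count $|\theta^{-1}(P')|=d/|G(P)|$, which immediately gives $e_P=|G(P)|$.
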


Note that any automorphism is the restriction of a linear transformation (see \cite[Appendix A, 17 and 18]{acgh} or \cite{chang}), since $C$ is smooth and of degree $d \ge 4$. 
According to Part I \cite[Remark 2.2 and Theorem 2.3]{fmt}, we have the following fact and two corollaries. 

\begin{fact}[\cite{fmt}, Theorem 2.3] \label{standard form} 
The group $G[P]$ is a cyclic group.  
Furthermore, for an integer $n \ge 2$, $n$ divides $|G[P]|$ if and only if there exists a linear transformation $\phi$ such that 
\begin{itemize}
\item[(1)] $\phi(P)=(1:0:0)$,  
\item[(2)] there exists an element $\sigma \in G[\phi(P)] \subset {\rm Bir}(\phi(C))$ which is represented by the matrix 
$$ A_{\sigma}=\left(\begin{array}{ccc} \zeta & 0 & 0 \\ 0 & 1 & 0 \\
0 & 0 & 1 \end{array} \right), $$
where $\zeta$ is a primitive $n$-th roof of unity, and 
\item[(3)] $\phi(C)$ is given by 
$$ \sum_i G_{d-n i}(Y, Z)X^{n i}=0, $$
where $G_{d-n i}$ is a homogeneous polynomial of degree $d-n i$ in variables $Y, Z$. 
\end{itemize}
\end{fact}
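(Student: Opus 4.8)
The plan is to establish cyclicity first and then to deduce both directions of the ``furthermore'' part from an explicit matrix normal form for a single generator. Since $C$ is smooth of degree $d\ge 4$, every element of $G[P]$ is the restriction of a linear transformation, and $G[P]$ is finite, being a subgroup of the finite group ${\rm Aut}(C)$. First I would choose coordinates with $P=(1:0:0)$, so that $\pi_P$ is $(X:Y:Z)\mapsto(Y:Z)$. The condition $\pi_P\circ\sigma=\pi_P$ says that $\pi_P\circ\sigma$ and $\pi_P$ agree on the irreducible curve $C$ (which lies in no line), hence agree as rational maps $\mathbb P^2\dashrightarrow\mathbb P^1$; this pins down the lower two rows of the linear representative of $\sigma$ and forces, after rescaling, the shape
$$ A_\sigma=\begin{pmatrix}\lambda & b & c\\ 0 & 1 & 0\\ 0 & 0 & 1\end{pmatrix},\qquad \lambda\in K^\times. $$
The assignment $\sigma\mapsto\lambda$ is then a homomorphism $G[P]\to K^\times$ whose kernel consists of the matrices with $\lambda=1$; these form a subgroup of $(K^2,+)$, which has no nontrivial element of finite order in characteristic zero. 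Hence the homomorphism is injective, $G[P]$ embeds into $K^\times$, and a finite subgroup of $K^\times$ is cyclic.

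For the implication $(\Leftarrow)$, suppose $\phi$ exists as in (1)--(3). The matrix $A_\sigma={\rm diag}(\zeta,1,1)$ fixes $(1:0:0)=\phi(P)$, acts as the identity on $(Y:Z)$, and preserves $\phi(C)$ because each monomial $X^{ni}$ is invariant under $X\mapsto\zeta X$; hence $\sigma\in G[\phi(P)]$ has order $n$. Since conjugation by $\phi$ identifies $G[P]$ with $G[\phi(P)]$ (because $\phi$ carries lines through $P$ to lines through $\phi(P)$ and intertwines $\pi_P$ with $\pi_{\phi(P)}$), the cyclic group $G[P]$ contains an element of order $n$, so $n\mid|G[P]|$.

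For $(\Rightarrow)$ and the normal form, cyclicity yields a $\tau\in G[P]$ of order $n$. After a change of coordinates sending $P$ to $(1:0:0)$, the element $\tau$ is represented by a matrix $A_\tau$ of the shape above, whose eigenvalues are $\lambda,1,1$, so it is diagonalizable. A routine further conjugation by a matrix of the same shape, namely $\phi_1=\begin{pmatrix}1&\beta&\gamma\\0&1&0\\0&0&1\end{pmatrix}$ with $\beta,\gamma$ chosen to absorb $b,c$, brings it to ${\rm diag}(\lambda,1,1)$ while keeping $\phi(P)=(1:0:0)$. The order of the resulting transformation equals the order of $\lambda$, so $\lambda$ is a primitive $n$-th root of unity; setting $\zeta:=\lambda$ gives (1) and (2).

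It remains to verify (3), which I expect to be the crux. Writing the defining polynomial of $\phi(C)$ as $F=\sum_j F_{d-j}(Y,Z)X^j$ with $\deg F_{d-j}=d-j$, invariance of the curve under $\sigma$ gives $F(\zeta X,Y,Z)=\epsilon F(X,Y,Z)$ for a constant $\epsilon$; applying this $n$ times yields $\epsilon^n=1$, and comparing coefficients shows every nonzero term satisfies $j\equiv r\pmod n$, where $\epsilon=\zeta^r$. The hard part is to rule out a nonzero residue $r$, and the only structural input that does so is irreducibility of $C$ (guaranteed by smoothness and $d\ge 4$): since $\{X=0\}$ is not a component of $\phi(C)$, we have $F(0,Y,Z)=F_d\neq 0$, so the exponent $j=0$ occurs, forcing $r=0$ and $\epsilon=1$. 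Consequently every exponent is a multiple of $n$ and $F=\sum_i F_{d-ni}(Y,Z)X^{ni}$, which is (3) with $G_{d-ni}:=F_{d-ni}$. The remaining points---cyclicity and the diagonalization---are elementary once the matrix shape of the elements of $G[P]$ has been extracted.
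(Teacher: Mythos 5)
Your proof is correct and takes essentially the same route as the source of this statement (the paper itself gives no proof, quoting it as a Fact from Part I \cite{fmt}, Theorem 2.3): normalize $P=(1:0:0)$, use fiber-preservation to force the matrix shape $\left(\begin{smallmatrix}\lambda & b & c\\ 0&1&0\\ 0&0&1\end{smallmatrix}\right)$, embed $G[P]$ into $K^\times$ via $\sigma\mapsto\lambda$ with unipotent (hence torsion-free in characteristic zero) kernel to get cyclicity and injectivity, then diagonalize a generator using $\lambda\ne 1$ and sort the defining polynomial by $X$-exponents modulo $n$. Your two appeals to irreducibility --- to upgrade $\pi_P\circ\sigma=\pi_P$ on $C$ to an identity of rational maps via the vanishing quadratic $r_2Z-r_3Y$, and to guarantee $F_d=F(0,Y,Z)\neq 0$ so that the residue $r$ is zero --- are exactly the delicate points, and you handled both.
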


\begin{corollary} \label{fixed locus} 
For $\sigma \in G[P] \setminus \{1\}$, we define $F[P]:=\{Q \in \mathbb P^2 \ | \  \sigma(Q)=Q \}$. 
If we use the standard form as in Fact \ref{standard form}, $F[P]=\{P\} \cup \{X=0\}$. 
In particular, the set $F[P]$ does not depend on $\sigma$. 
\end{corollary}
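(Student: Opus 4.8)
The plan is to reduce the statement to an explicit computation of the fixed points of a diagonal projective transformation, using Fact~\ref{standard form} to pin down coordinates. Recall first that $\sigma$, as an automorphism of the smooth plane curve $C$ of degree $d \ge 4$, is the restriction of a linear transformation of $\mathbb{P}^2$ (as recalled just before Fact~\ref{standard form}); hence the condition $\sigma(Q)=Q$ makes sense for every $Q \in \mathbb{P}^2$, and $F[P]$ is precisely the fixed locus of this linear transformation acting on the whole plane.

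First I would apply Fact~\ref{standard form} with $n = |G[P]|$ (legitimate since $|G[P]| \ge 2$ whenever $G[P]\setminus\{1\}$ is nonempty). Because $G[P]$ is cyclic, this fixes one coordinate system in which $P = (1:0:0)$ and a generator $\tau$ of $G[P]$ is represented by $A_\tau = \mathrm{diag}(\zeta,1,1)$ with $\zeta$ a primitive $|G[P]|$-th root of unity. Since $G[P] = \langle \tau \rangle$, every $\sigma \in G[P]$ is a power $\tau^k$, hence represented by the diagonal matrix $A_\sigma = \mathrm{diag}(\zeta^k,1,1)$, and $\sigma \neq 1$ forces $\zeta^k \neq 1$. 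This simultaneous diagonal form for all of $G[P]$ is the conceptual point on which the corollary rests.

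Next I would compute the fixed locus directly from $A_\sigma$. A point $Q = (a:b:c)$ lies in $F[P]$ exactly when $(\zeta^k a : b : c) = \lambda (a:b:c)$ for some $\lambda \in K^{\times}$. If $a = 0$ the equation holds with $\lambda = 1$, so the entire line $\{X=0\}$ is fixed; if $a \neq 0$ then necessarily $\lambda = \zeta^k$, whence $(\zeta^k-1)b = (\zeta^k-1)c = 0$, and since $\zeta^k \neq 1$ this gives $b = c = 0$, i.e. $Q = (1:0:0) = P$. Thus $F[P] = \{P\} \cup \{X = 0\}$. As this description depends on $\sigma$ only through the inequality $\zeta^k \neq 1$, which holds for every $\sigma \in G[P] \setminus \{1\}$, the set $F[P]$ is independent of the chosen $\sigma$, which is the final assertion.

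The computation itself is routine; the only step requiring care is the normalization, namely verifying that after fixing coordinates by Fact~\ref{standard form} for the full group order, every non-identity element of $G[P]$ is simultaneously brought into the form $\mathrm{diag}(\zeta^k,1,1)$ with $\zeta^k \neq 1$. This is exactly where the cyclicity of $G[P]$ is used, and it is what forces the fixed locus to coincide for all $\sigma$.
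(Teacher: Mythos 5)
Your proof is correct and follows exactly the route the paper intends: the paper states this corollary as an immediate consequence of Fact~\ref{standard form} (citing Part I) without writing out the computation, and your argument --- using cyclicity of $G[P]$ to put every non-identity element simultaneously in the form $\mathrm{diag}(\zeta^k,1,1)$ with $\zeta^k \neq 1$, then reading off the fixed locus $\{P\} \cup \{X=0\}$ --- is precisely the omitted verification. The one step you rightly flag as needing care, that the element of order $n=|G[P]|$ produced by Fact~\ref{standard form} generates $G[P]$ and hence diagonalizes the whole group at once, is handled correctly.
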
 

\begin{corollary} \label{two groups}
Let $P_1, P_2 \in \mathbb{P}^2$. 
If $P_1 \ne P_2$, then $G[P_1] \cap G[P_2]=\{1\}$. 
\end{corollary}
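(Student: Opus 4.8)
The plan is to argue by contradiction using the fixed-locus description in Corollary \ref{fixed locus}. Since $C$ is smooth of degree $d \ge 4$, every element of $G[P]$ is the restriction of a linear transformation of $\mathbb{P}^2$, so each nontrivial $\sigma$ has a well-defined fixed-point set in $\mathbb{P}^2$ that is intrinsic to $\sigma$. Suppose, for contradiction, that there exists $\sigma \in \bigl(G[P_1] \cap G[P_2]\bigr) \setminus \{1\}$.

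First I would read off the fixed locus of $\sigma$ in two ways. Viewing $\sigma$ as a nontrivial element of $G[P_1]$ and applying Corollary \ref{fixed locus}, its fixed set equals $\{P_1\} \cup \ell_1$, where $\ell_1$ is a projective line not passing through $P_1$ (in the standard coordinates of Fact \ref{standard form}, $P_1 = (1:0:0)$ and $\ell_1 = \{X = 0\}$). Viewing the same $\sigma$ as a nontrivial element of $G[P_2]$ gives fixed set $\{P_2\} \cup \ell_2$ with $P_2 \notin \ell_2$. Because the fixed-point set of $\sigma$ in $\mathbb{P}^2$ does not depend on which group we regard $\sigma$ as belonging to, we obtain the equality of subsets
$$ \{P_1\} \cup \ell_1 = \{P_2\} \cup \ell_2. $$

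The next step is a short geometric comparison. The set on each side is the union of a line with a single point lying off that line; its unique one-dimensional component is the line, and its unique isolated point is the extra point. Matching these components forces $\ell_1 = \ell_2$ and then $P_1 = P_2$, contradicting $P_1 \ne P_2$. Hence no such $\sigma$ exists and $G[P_1] \cap G[P_2] = \{1\}$.

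I do not expect a serious obstacle here: once Corollary \ref{fixed locus} pins the fixed locus of a nontrivial element to the intrinsic pair ``point plus line,'' the whole argument reduces to the observation that this pair is recoverable from the fixed set. The only point requiring minor care is confirming that $P$ genuinely lies off the line $\ell$ in the standard form, so that the point and line are distinguishable as the isolated and one-dimensional parts of the fixed set; this is immediate since $(1:0:0) \notin \{X=0\}$.
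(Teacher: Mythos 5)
Your proof is correct and follows essentially the same route the paper intends: the corollary is stated as an immediate consequence of Fact \ref{standard form} and Corollary \ref{fixed locus}, where the fixed locus of any nontrivial element of $G[P]$ is the intrinsic pair $\{P\} \cup \ell$ with $P \notin \ell$, so a shared nontrivial element would force $P_1 = P_2$. Your added care in recovering the isolated point from the fixed set (and implicitly using that the linear extension of an automorphism of $C$ is unique, so the fixed set is well defined) fills in exactly the details the paper leaves tacit.
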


We will use the following two well-known facts. 

\begin{fact} \label{total-cyclic}
Let $G \subset {\rm Aut}(C)$ be a finite subgroup, and let $Q \in C$ be a point.  
If $\sigma(Q)=Q$ for any $\sigma \in G$, then $G$ is a cyclic group. 
\end{fact}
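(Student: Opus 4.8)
The plan is to linearize the action of $G$ near the common fixed point $Q$ and then invoke the fact that every finite subgroup of the multiplicative group of a field is cyclic. First I would choose a local uniformizer $t \in \mathcal{O}_{C,Q}$, so that the maximal ideal $\mathfrak{m}_Q$ is generated by $t$ and the cotangent space $\mathfrak{m}_Q/\mathfrak{m}_Q^2$ is a one-dimensional $K$-vector space spanned by the class of $t$. Since every $\sigma \in G$ is an automorphism of $C$ fixing $Q$, it induces a $K$-algebra automorphism of the local ring $\mathcal{O}_{C,Q}$ preserving $\mathfrak{m}_Q$, and hence acts $K$-linearly on $\mathfrak{m}_Q/\mathfrak{m}_Q^2$. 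This produces a group homomorphism $\rho: G \rightarrow {\rm GL}(\mathfrak{m}_Q/\mathfrak{m}_Q^2)=K^*$, sending $\sigma$ to the scalar $a_\sigma \in K^*$ determined by $\sigma^*(t) \equiv a_\sigma t \pmod{\mathfrak{m}_Q^2}$.

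The key step is to prove that $\rho$ is injective, and this is exactly where the characteristic-zero hypothesis enters. Suppose $\sigma \in \ker \rho$ is nontrivial; then in the completed local ring $K[[t]]$ we may write $\sigma^*(t)=t+c_k t^k + (\text{higher order terms})$, where $k \ge 2$ and $c_k \ne 0$ is the coefficient of the lowest-order term in which $\sigma^*(t)$ departs from $t$. Because $\sigma^*$ is a ring homomorphism, iterating it amounts to substitution, and comparing lowest-order terms gives $(\sigma^m)^*(t)=t+m c_k t^k + \cdots$ for every $m \ge 1$. As $G$ is finite, $\sigma$ has some finite order $m$, whence $(\sigma^m)^*=\mathrm{id}$ forces $m c_k=0$; since $\mathrm{char}\,K=0$ this yields $c_k=0$, contradicting $c_k \ne 0$. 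Therefore $\sigma^*(t)=t$, so $\sigma=\mathrm{id}$, and $\ker \rho=\{1\}$.

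It then follows that $\rho$ embeds $G$ as a finite subgroup of $K^*$. Since any finite subgroup of the multiplicative group of a field is cyclic, $G$ is cyclic, as claimed.

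The main obstacle is precisely the injectivity of $\rho$: one must rule out nontrivial finite-order automorphisms that fix $Q$ and act trivially on the tangent direction at $Q$. In positive characteristic such ``wild'' automorphisms genuinely occur, so the hypothesis $\mathrm{char}\,K=0$ cannot be dropped; the comparison of lowest-order terms in $K[[t]]$ carried out above is what makes this step work cleanly in the present setting.
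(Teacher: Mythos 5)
Your proof is correct. Note that the paper itself offers no argument for Fact \ref{total-cyclic}: it is introduced as one of ``two well-known facts'' and used without proof, so there is no internal proof to compare against, and your write-up supplies exactly the standard argument underlying the statement --- the representation of $G$ on the cotangent line $\mathfrak{m}_Q/\mathfrak{m}_Q^2$, injectivity of that representation in characteristic zero via the lowest-order-coefficient computation in $K[[t]]$, and cyclicity of finite subgroups of $K^*$. Two small points are worth smoothing over. First, $\sigma \mapsto \sigma^*$ is an anti-homomorphism, $(\sigma\tau)^*=\tau^*\sigma^*$, so your $\rho$ is a genuine homomorphism only because $K^*$ is abelian (alternatively, use the differential $d\sigma_Q$ on the tangent line); this costs nothing here but deserves a clause. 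Second, at the end of the injectivity step you should say explicitly that $\sigma^*(t)=t$ forces $\sigma^*$ to be the identity on all of $K[[t]]$ (a continuous $K$-algebra endomorphism of $K[[t]]$ is determined by the image of $t$), hence on $\mathcal{O}_{C,Q}$ and on the function field $K(C)$, whence $\sigma=\mathrm{id}$ on $C$. For comparison, the usual reference-level derivation goes through ramification theory: the stabilizer $G(Q)$ is the inertia group at $Q$, its first (wild) ramification group is a $p$-group and hence trivial in characteristic zero, and the tame quotient embeds in $K^*$ and is cyclic (e.g.\ Stichtenoth III.8 or Serre's \emph{Local Fields} IV); your power-series computation is precisely an elementary, self-contained proof of the vanishing of the wild part, and your closing remark correctly identifies that this is the step that genuinely fails in positive characteristic.
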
 

\begin{fact} \label{PGL}
Let $G$ be a finite subgroup of ${\rm PGL}(2, K)$. 
Then $G$ is isomorphic to one of the following: 
\begin{itemize}
\item[(1)] a cyclic group; 
\item[(2)] a dihedral group; 
\item[(3)] the alternating group $A_4$ of degree four; 
\item[(4)] the symmetric group $S_4$ of degree four; 
\item[(5)] the alternating group $A_5$ of degree five. 
\end{itemize}
\end{fact}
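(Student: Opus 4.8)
The plan is to use the identification $\mathrm{PGL}(2,K)=\mathrm{Aut}(\mathbb{P}^1)$ and to study the action of $G$ on $\mathbb{P}^1(K)$ geometrically. Since $\mathrm{char}\,K=0$, every element of finite order in $\mathrm{PGL}(2,K)$ is diagonalizable, so each nonidentity $g\in G$ fixes exactly two points of $\mathbb{P}^1$. Writing $N=|G|$ and counting the incidences between the $N-1$ nonidentity elements and their fixed points in two ways --- over elements (each contributing $2$) and over the $G$-orbits of points with nontrivial stabilizer (an orbit with stabilizer order $e$ has size $N/e$ and contributes $(N/e)(e-1)$) --- yields the identity
\begin{equation*}
2-\frac{2}{N}=\sum_i\left(1-\frac{1}{e_i}\right),
\end{equation*}
where $e_1,\dots,e_k$ are the stabilizer orders of the distinct ramified orbits. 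This is the Riemann--Hurwitz relation for the quotient $\mathbb{P}^1\to\mathbb{P}^1/G\cong\mathbb{P}^1$.

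Next I would solve this constraint. Each summand lies in $[1/2,1)$ and the left-hand side is $<2$, so there are at most three terms; and the left-hand side is $\ge 1$ for $N\ge 2$, so there are at least two. Ordering $e_1\le e_2\le e_3$, a short case analysis forces $(e_1,\dots,e_k)$ to be one of $(n,n)$, $(2,2,m)$, $(2,3,3)$, $(2,3,4)$, $(2,3,5)$, with $N$ equal to $n$, $2m$, $12$, $24$, $60$ respectively.

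It remains to identify the isomorphism type in each case, which is the real content. In case $(n,n)$ the group fixes two points, hence lies in the stabilizer of a pair of points --- a torus isomorphic to $K^\times$ --- and is cyclic. In case $(2,2,m)$ the orbit with stabilizer order $m$ has size $2$; its pointwise stabilizer $H$ is cyclic of order $m$ and of index $2$, and any element outside $H$ swaps the two points and so inverts $H$, giving a dihedral group. For the three polyhedral cases I would use the subgroup counts forced by the ramification: a cyclic stabilizer of prime order $p$ fixes exactly two points, and distinct such subgroups have disjoint fixed-point pairs, so their number is half the size of the corresponding orbit. For $(2,3,5)$ this gives $n_5=6$, so $G$ has no normal Sylow $5$-subgroup; a group of order $60$ with six Sylow $5$-subgroups is simple, whence $G\cong A_5$. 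For $(2,3,3)$ the group acts faithfully on the orbit of size $4$ (faithfully, since a nonidentity M\"obius transformation cannot fix more than two points), embedding $G$ into $\mathrm{Sym}(4)=S_4$ as the unique subgroup of order $12$, namely $A_4$. For $(2,3,4)$ one has $n_3=4$, and conjugation on the Sylow $3$-subgroups gives $G\to S_4$ with transitive image containing a $3$-cycle, hence of order $12$ or $24$; the kernel is normal of order $1$ or $2$, and a nontrivial kernel would be a central involution $z$, forcing $G$ to preserve the two-point set $\mathrm{Fix}(z)$ and hence to have a $G$-orbit of size $\le 2$ --- impossible here, since size $1$ makes $G$ cyclic and size $2$ produces an element of order $12$ exceeding the maximal stabilizer order $4$. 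Thus the map is an isomorphism onto $S_4$.

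The main obstacle I anticipate is this last step: the counting identity pins down only the order and the ramification profile, and distinguishing the abstract group among all groups of that order requires the finer fixed-point and Sylow bookkeeping above. The octahedral case $S_4$ is the most delicate, since no $G$-orbit has size $4$ and one must instead realize the action on the four Sylow $3$-subgroups and rule out a nontrivial kernel by the orbit-size argument.
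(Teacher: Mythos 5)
The paper offers no proof of this statement: it is labeled a Fact and invoked as well known (the classification of finite subgroups of ${\rm PGL}(2,K)$, going back to Klein), so there is no internal argument to compare yours against. Your proposal is the standard classical proof, and it is essentially correct: in characteristic zero every nonidentity element of finite order is diagonalizable with exactly two fixed points on $\mathbb{P}^1$; the two-way incidence count gives
\begin{equation*}
2-\frac{2}{N}=\sum_i\Bigl(1-\frac{1}{e_i}\Bigr),
\end{equation*}
whose solutions are $(n,n)$, $(2,2,m)$, $(2,3,3)$, $(2,3,4)$, $(2,3,5)$; and your bookkeeping in the polyhedral cases is sound. In particular, the disjointness of the fixed pairs of distinct order-$p$ point stabilizers (a common fixed point would put two distinct subgroups of order $p$ inside a stabilizer of order exactly $p$) correctly yields $n_5=6$ and $n_3=4$; faithfulness of the action on the $4$-point orbit in the tetrahedral case follows, as you say, from a M\"obius transformation fixing at most two points; and your elimination of a central involution in the octahedral case via the orbit sizes of its fixed pair is valid, since all stabilizer orders there are at most $4$.

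One step needs tightening: in the case $(2,2,m)$, knowing that every $\sigma\notin H$ swaps the two points and hence inverts $H$ only pins $G$ down to being dihedral or dicyclic, since a priori $\sigma^2$ could be the involution of $H$; you must also check $\sigma^2=1$. This is immediate inside ${\rm PGL}(2,K)$: in coordinates where the two fixed points are $0$ and $\infty$, any swapping element is represented by an antidiagonal matrix, whose square is scalar, hence trivial in ${\rm PGL}(2,K)$; so every element outside $H$ is an involution and $G$ is dihedral. With that line added the proof is complete; your appeals to standard group theory (a group of order $60$ with six Sylow $5$-subgroups is simple, a simple group of order $60$ is $A_5$, and $A_4$ is the unique subgroup of $S_4$ of order $12$) are at the same level of granularity at which the paper itself cites this Fact.
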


To study the number of quasi-Galois points, we introduce some symbols here. 
The set of all quasi-Galois points $P \in C$ with $|G[P]|=n$ (resp., $|G[P]| \ge n$) is denoted by $\Delta_n$ (resp., $\Delta_{\ge n}$). 
The number of quasi-Galois points $P \in C$ with $|G[P]|=n$ (resp., $|G[P]| \ge n$) is denoted by $\delta[n]$ (resp., $\delta[\ge n]$). 
Similarly, we define $\Delta'_n$, $\Delta'_{\ge n}$, $\delta'[n]$ and $\delta'[\ge n]$, when we consider the case $P \in \mathbb P^2 \setminus C$. 

\section{The number of quasi-Galois points} 

Let $P \in \mathbb P^2$ be a quasi-Galois point for $C$ with $|G[P]|=n \ge 2$. 
We consider ramification points for the projection $\pi_P$. 

\begin{proposition} \label{tangent1} 
There exist $d$ points $Q_1, \ldots, Q_d \in C \cap (F[P]\setminus \{P\})$ such that $P \in T_{Q_i}C$ and $I_{Q_i}(C, T_{Q_i}C)=l_i n$ for some integer $l_i \ge 1$. 
\end{proposition}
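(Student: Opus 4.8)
The plan is to pass to the standard form of Fact~\ref{standard form} and read off the intersection of $C$ with the fixed line of $G[P]$ directly from the defining equation. Since $n=|G[P]|\ge 2$, Fact~\ref{standard form} supplies a linear transformation after which $P=(1:0:0)$, the generator of $G[P]$ is $\sigma=\mathrm{diag}(\zeta,1,1)$ with $\zeta$ a primitive $n$-th root of unity, and $C$ is defined by $F=\sum_i G_{d-ni}(Y,Z)X^{ni}=0$. By Corollary~\ref{fixed locus}, $F[P]=\{P\}\cup\{X=0\}$, and since $P=(1:0:0)\notin\{X=0\}$ we have $F[P]\setminus\{P\}=\{X=0\}$. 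Thus the points sought are precisely those of $C\cap\{X=0\}$.

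First I would produce the $d$ points. Setting $X=0$ in $F$ kills every term except $i=0$, so $C\cap\{X=0\}$ is cut out by $G_d(Y,Z)=0$. Because $C$ is smooth of degree $d\ge 4$ it is irreducible, so the line $\{X=0\}$ is not a component; equivalently $X\nmid F$, so $G_d\not\equiv 0$. Hence $G_d$ is a binary form of degree $d$ whose $d$ roots (with multiplicity) yield points $Q_1,\dots,Q_d$ of the form $(0:b:c)$ lying on $C\cap\{X=0\}$, all distinct from $P$. This is the asserted list, counted with multiplicity.

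The heart of the argument is a single substitution, performed at a point $Q=(0:b:c)$ of $C\cap\{X=0\}$. The line $\overline{PQ}$ is $\{(s:tb:tc)\}$, and by homogeneity $G_{d-ni}(tb,tc)=t^{d-ni}G_{d-ni}(b,c)$, so restricting $F$ to this line and setting $t=1$ gives
$$ F(s,b,c)=\sum_{i\ge 0} s^{ni}\,G_{d-ni}(b,c). $$
Only exponents divisible by $n$ occur; this is exactly what the special shape of $F$ buys. Since $Q\in C$ the constant term $G_d(b,c)$ vanishes, and irreducibility forbids $\overline{PQ}\subset C$, so not all coefficients vanish and we may write $F(s,b,c)=s^{nl}u(s)$ with $u(0)\ne 0$, where $l=\min\{\,i\ge 1:\ G_{d-ni}(b,c)\ne 0\,\}\ge 1$. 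Therefore $I_Q(C,\overline{PQ})=nl\ge n\ge 2$.

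Finally I would convert this into the two stated conclusions. As $Q$ is a smooth point and $I_Q(C,\overline{PQ})\ge 2$, the line $\overline{PQ}$ must be the tangent $T_QC$; in particular $P\in T_QC$, and $I_Q(C,T_QC)=I_Q(C,\overline{PQ})=nl=l_Q n$ with $l_Q\ge 1$, as required. Equivalently, one could invoke $I_Q(C,\overline{PQ})=e_Q$ from Fact~\ref{index}(2) and deduce $n\mid e_Q$ from the factorization $C\to C/G[P]\to\mathbb{P}^1$ together with Fact~\ref{Galois covering}(1), since every element of $G[P]$ fixes $Q\in\{X=0\}$. The divisibility by $n$ is forced automatically by the shape of $F$, so I expect no serious obstacle; the only points needing care are reading the $d$ points as counted with multiplicity and using irreducibility to ensure $l$ is finite.
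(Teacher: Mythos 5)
Your proposal is correct, but it takes a genuinely different route from the paper's. The paper never touches the defining equation: it observes via Corollary \ref{fixed locus} that every element of $G[P]$ fixes each $Q \in C \cap (F[P]\setminus\{P\})$, applies Fact \ref{Galois covering}(1) to conclude that the quotient covering $C \rightarrow C/G[P]$ is ramified at $Q$ with index $n$, factors $\pi_P$ through this quotient to get $n \mid e_Q$, and then converts via Fact \ref{index}(2) to $I_Q(C,\overline{PQ})=ln$, whence $\overline{PQ}=T_QC$. You instead restrict the standard-form equation $F=\sum_i G_{d-ni}(Y,Z)X^{ni}$ of Fact \ref{standard form} to the pencil line through $Q=(0:b:c)$ and read the divisibility $n \mid I_Q(C,\overline{PQ})$ directly off the exponents, using only irreducibility of $C$ (so that neither $\{X=0\}$ nor $\overline{PQ}$ is a component) to make the order of vanishing finite and nonzero. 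Your computation is elementary and self-contained, bypassing the covering-theoretic facts entirely, and it exhibits concretely where the divisibility comes from; the paper's argument is coordinate-free, shorter, and is the template reused for Proposition \ref{tangent2}. You correctly note the paper's route as an alternative in your final paragraph.

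One small correction: your closing hedge that the $d$ points should be read ``counted with multiplicity'' undersells both the statement and your own argument. The paper means, and later uses (e.g.\ in Lemma \ref{pair 1}, where two distinct tangent lines $\overline{P_1Q_1}$, $\overline{P_1Q_2}$ are intersected, and in the flex counts in the proofs of Theorem \ref{sextic} and the corollary after Lemma \ref{n=4}), that $C \cap (F[P]\setminus\{P\})$ consists of exactly $d$ \emph{distinct} points. This follows immediately from what you proved: at each $Q \in C \cap \{X=0\}$ the tangent is $\overline{PQ}$, which contains $P \notin \{X=0\}$, so $T_QC \ne \{X=0\}$ and hence $I_Q(C,\{X=0\})=1$ at the smooth point $Q$. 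Thus every root of $G_d$ is simple and the $d$ points are automatically distinct; you should state this rather than the multiplicity caveat.
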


\begin{proof} 
Let $Q \in C \cap (F[P] \setminus \{P\})$. 
By Corollary \ref{fixed locus}, $\sigma(Q)=Q$ for each $\sigma \in G[P]$. 
By Fact \ref{Galois covering}(1), the ramification index at $Q$ for  the covering map $C \mapsto C/G[P]$ is equal to $n$. 
Since the projection $\pi_P$ is the composite map of $C \rightarrow C/G[P]$ and $C/G[P] \rightarrow \mathbb P^1$, the ramification index $e_Q$ at $Q$ for $\pi_P$ is equal to $l n$ for some $l \ge 1$. 
By Fact \ref{index}(2), $e_Q=I_Q(C, \overline{PQ})=l n$ and $\overline{PQ}=T_QC$. 
Furthermore, the line given by $F[P]\setminus \{P\}$ consists of exactly $d$ points. 
\end{proof}

If $P \in C$, we have the following. 

\begin{proposition} \label{tangent2}
If $P \in C$, then $I_P(C, T_PC)=l n+1$ for some integer $l \ge 1$. 
\end{proposition}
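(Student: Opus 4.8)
The plan is to run the same argument as in Proposition~\ref{tangent1}, but now exploiting the fact that $P$ itself is a fixed point of every element of $G[P]$, and using Fact~\ref{index}(1) in place of Fact~\ref{index}(2). First I would apply Fact~\ref{standard form} with $n=|G[P]|$ to choose coordinates so that $P=(1:0:0)$ and a generator of $G[P]$ is realized by the matrix $A_\sigma=\mathrm{diag}(\zeta,1,1)$, with $\zeta$ a primitive $n$-th root of unity. Since $\sigma(1:0:0)=(\zeta:0:0)=(1:0:0)$, Corollary~\ref{fixed locus} gives $P\in F[P]$, and hence $\tau(P)=P$ for every $\tau\in G[P]$. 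Thus the stabilizer subgroup of $P$ in $G[P]$ is the whole group $G[P]$.

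Next I would pass to the Galois covering $\theta\colon C\rightarrow C/G[P]$. By Fact~\ref{Galois covering}(1), the ramification index of $\theta$ at $P$ equals the order of the stabilizer, which is $|G[P]|=n$. Factoring the projection as $\pi_P=\rho\circ\theta$, where $\rho\colon C/G[P]\rightarrow\mathbb P^1$ is the induced finite morphism of smooth curves, multiplicativity of ramification indices under composition yields $e_P=n\,l$, where $l\ge 1$ is the ramification index of $\rho$ at $\theta(P)$. Finally, Fact~\ref{index}(1) gives $e_P=I_P(C,T_PC)-1$, so that $I_P(C,T_PC)=ln+1$, as required.

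The argument is essentially routine once Proposition~\ref{tangent1} is in hand, so there is no real obstacle; the only points that need care are verifying that $P$ lies in the fixed locus $F[P]$ (so that its stabilizer is all of $G[P]$ and $\theta$ is totally ramified of index $n$ there) and that the ramification index of $\pi_P$ at $P$ factors as $n\cdot l$ with $l\ge 1$ under the composition $\pi_P=\rho\circ\theta$. The value $l\ge 1$ is automatic, and $I_P(C,T_PC)\ge 2$ holds because $P$ is a smooth point, which is consistent with $e_P\ge 1$.
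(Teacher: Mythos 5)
Your proof is correct and follows essentially the same route as the paper's: both deduce $\sigma(P)=P$ for all $\sigma\in G[P]$ (the paper cites Corollary~\ref{fixed locus} directly, while you verify it via the standard form of Fact~\ref{standard form}), apply Fact~\ref{Galois covering}(1) to get ramification index $n$ for $C\rightarrow C/G[P]$ at $P$, use the factorization $\pi_P=\rho\circ\theta$ and multiplicativity to obtain $e_P=ln$, and conclude with Fact~\ref{index}(1). There is no gap; your extra remarks on $P\in F[P]$ and $l\ge 1$ are exactly the points the paper's proof also relies on.
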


\begin{proof}
By Corollary \ref{fixed locus}, for any $\sigma \in G[P]$, $\sigma(P)=P$. 
Then the covering map $C \rightarrow C/G[P]$ is ramified at $P$ with index $n$, by Fact \ref{Galois covering}(1). 
Since the projection $\pi_P$ is the composite map of $C \rightarrow C/G[P]$ and $C/G[P] \rightarrow \mathbb P^1$, the ramification index $e_P$ at $P$ is equal to $l n$ for some $l \ge 1$. 
Note that $e_P=I_P(C, T_PC)-1$, by Fact \ref{index}(1). 
It follows that $I_P(C, T_PC)=l n+1$. 
\end{proof}

Using Fact \ref{total-cyclic}, we have the following. 

\begin{proposition}\label{two quasi-Galois}
Let $P_1, P_2\in \mathbb P^2$ be points with $|G[P_1]|=n_1 \ge 2$, $|G[P_2]|=n_2 \ge 2$. 
\begin{itemize}
\item[(1)] If $P_1, P_2 \in C$, then $C \cap F[P_1] \cap F[P_2] \subset \{P_1, P_2\}$. 
Furthermore, if $n_1$ and $n_2$ are not coprime, then $C \cap F[P_1] \cap F[P_2]=\emptyset$. 
\item[(2)] If $P_1, P_2 \in \mathbb{P}^2 \setminus C$, then $C \cap F[P_1] \cap F[P_2] = \emptyset$. 
\end{itemize} 
\end{proposition}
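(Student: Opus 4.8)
The plan is to reduce everything to the common fixed points on $C$ of the two reflections generating $G[P_1]$ and $G[P_2]$, and then to exclude the one ``extra'' common fixed point by a tangent-line argument. Suppose $Q \in C \cap F[P_1] \cap F[P_2]$. By Corollary \ref{fixed locus}, every element of $G[P_1]$ and of $G[P_2]$ fixes $Q$, so the subgroup $H := \langle G[P_1], G[P_2]\rangle \subset {\rm Aut}(C)$ — which is finite, since a smooth plane curve of degree $d \ge 4$ has genus at least $3$ — fixes the point $Q \in C$. By Fact \ref{total-cyclic}, $H$ is cyclic; in particular the generators $\sigma_1, \sigma_2$ commute and can be simultaneously diagonalized as linear transformations.

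First I would extract the ``furthermore'' clause of (1) from cyclicity alone. If $\gcd(n_1,n_2) = m > 1$, then the cyclic group $H$ has a unique subgroup of order $m$; since $G[P_1]$ and $G[P_2]$ are cyclic of orders $n_1, n_2$, each contains this subgroup, whence $G[P_1] \cap G[P_2]$ has order at least $m > 1$. As $P_1 \ne P_2$, this contradicts Corollary \ref{two groups}, so no such $Q$ exists and $C \cap F[P_1] \cap F[P_2] = \emptyset$.

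For the remaining assertions I would pin down the common fixed locus. Using the simultaneous diagonalization and $P_1 \ne P_2$, choose coordinates with $\sigma_1 = {\rm diag}(\zeta_1,1,1)$, $P_1 = (1:0:0)$, $\ell_1 = \{X=0\}$ and $\sigma_2 = {\rm diag}(1,\zeta_2,1)$, $P_2 = (0:1:0)$, $\ell_2 = \{Y=0\}$; then Corollary \ref{fixed locus} gives $F[P_1] \cap F[P_2] = \{P_1, P_2, R\}$ with $R = \ell_1 \cap \ell_2 = (0:0:1)$, so $Q \in \{P_1, P_2, R\}$. The key step is to show $R \notin C$: if $R \in C$, then since $R$ lies on $\ell_1$ and is fixed by $\sigma_1$, the ramification argument of Proposition \ref{tangent1} (via Facts \ref{Galois covering}(1) and \ref{index}(2)) forces $\overline{P_1 R} = T_R C$, i.e. $P_1 \in T_R C$; symmetrically $P_2 \in T_R C$; but then $T_R C$ would contain the three non-collinear points $P_1, P_2, R$, which is absurd. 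With $R \notin C$ established, part (2) is immediate ($P_1, P_2 \notin C$ leaves the intersection empty), and the inclusion in (1) follows because $P_1, P_2 \in C$ leaves only $\{P_1, P_2\}$.

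I expect the main obstacle to be precisely this geometric exclusion of $R$: one must check that the ramification argument applies, which needs $P_i \ne R$ (guaranteed since $R \in \ell_i \not\ni P_i$, and in case (2) also because $R \in C$ while $P_i \notin C$), and that the conclusion $P_i \in T_R C$ is valid whether or not $P_i$ lies on $C$. Everything else is bookkeeping with cyclic subgroups and the explicit coordinate description of $F[P_1]\cap F[P_2]$.
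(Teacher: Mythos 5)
Your proof is correct, and its second half takes a genuinely different route from the paper's. The first half essentially coincides: like the paper, you let the group fixing $Q \in C$ be cyclic via Fact \ref{total-cyclic} and derive the non-coprime clause from the uniqueness of the subgroup of a given order in a cyclic group together with Corollary \ref{two groups} (the paper applies this to the group generated by the two order-$n$ subgroups rather than to all of $H=\langle G[P_1],G[P_2]\rangle$, an immaterial difference). For the main inclusions, however, the paper never diagonalizes: it notes that $P_1$, $P_2$, $Q$ are collinear, picks $\sigma \in G[P_1]\setminus\{1\}$, observes that $\sigma$ fixes only $P_1$ and $Q$ on that line, so $P_3:=\sigma(P_2)\ne P_2$, and then applies the non-coprime case to the pair $(P_2,P_3)$ — which have the equal order $n_2$ — since $Q \in C\cap F[P_2]\cap F[P_3]$ with $F[P_3]=\sigma(F[P_2])$. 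You instead exploit cyclicity of $H$ once more to commute and simultaneously diagonalize $\sigma_1,\sigma_2$ (legitimate: a generator of the finite cyclic $H$ lifts to a finite-order, hence diagonalizable, matrix, and $\sigma_1,\sigma_2$ are its powers; Fact \ref{standard form} forces the eigenvalue pattern $(\zeta_i,1,1)$, so $P_1$ and $P_2$ sit at distinct coordinate vertices), compute $F[P_1]\cap F[P_2]=\{P_1,P_2,R\}$ with $R=(0:0:1)$, and exclude $R\in C$ because $\overline{P_1R}=T_RC=\overline{P_2R}$ would force the three triangle vertices to be collinear. Both proofs ultimately rest on the same ramification mechanism of Proposition \ref{tangent1} via Facts \ref{Galois covering}(1) and \ref{index}(2): you apply it at $R$, while the paper uses it tacitly at $Q$ — its assertion that $P_1,P_2,Q$ are collinear ``by definition'' is really the statement $P_1,P_2\in T_QC$, which is exactly Proposition \ref{tangent1}. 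What each approach buys: the paper's conjugation trick avoids all coordinate work and dispatches the general case by reduction to the non-coprime one; your version is more explicit and uniform — it proves $Q\in\{P_1,P_2\}$ directly for arbitrary $n_1,n_2$ without routing through the coprimality dichotomy — and it makes visible the tangent-line collinearity that the paper leaves implicit. Your flagged checks ($P_i\ne R$, and validity of Fact \ref{index}(2) whether or not $P_i\in C$) are indeed the right ones and hold as you argue.
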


\begin{proof}
Assume that there exists a point $Q \in C \cap F[P_1] \cap F[P_2]$. 
Note that, by definition, points $P_1, P_2$ and $Q$ are collinear. 

First, we assume that $n_1$ and $n_2$ are divisible by some integer $n \ge 2$. 
Since $G[P_1]$ and $G[P_2]$ are cyclic by Fact \ref{standard form}, there exist subgroups of $G[P_1]$ and $G[P_2]$ of order $n$ respectively. 
Let $G$ be the group generated by such subgroups.  
Then $G$ fixes the point $Q$. 
By Fact \ref{total-cyclic}, $G$ is a cyclic group. 
Therefore, by Corollary \ref{two groups}, $G$ is a cyclic group of order $n^2$. 
However, the cyclic group of order $n^2$ has a unique subgroup of order $n$. 
This is a contradiction. 
In particular, the latter assertion of (1) follows. 

Next, we consider the case where $Q \ne P_1, P_2$. 
Let $\sigma \in G[P_1] \setminus \{1\}$. 
Since $\sigma$ fixes $P_1$ and $Q$ on the line $\overline{P_1Q}=\overline{P_2Q}$, it follows that $P_3:=\sigma(P_2) \ne P_2$. 
Then $G[P_3]=\sigma G[P_2] \sigma^{-1}$ and $Q \in C \cap F[P_2] \cap F[P_3]$. 
By the above discussion, we have a contradiction. 
Assertions (1) and (2) follow. 
\end{proof}

For the number of quasi-Galois points on $C$, we have the following.

\begin{theorem} \label{inner} Let $n \ge 3$.  
Then
$$ \delta[n]=0, 1 \mbox{ or } 4. $$ 
Furthermore, $\delta[n]=4$ only if $n=3$, and $d=6m+4$ for some integer $m \ge 0$. 
\end{theorem}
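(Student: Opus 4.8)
The plan is to fix $n \ge 3$, assume $\delta[n] \ge 2$, and analyze $\Delta_n$ through the reflection structure of the generators. Write $\Delta_n = \{P_1,\dots,P_k\}$ with $k=\delta[n]$, and for each $i$ let $\sigma_i$ generate the cyclic group $G[P_i]$ (Fact \ref{standard form}), realized as the linear reflection with isolated fixed point $P_i$ and fixed line $\ell_i$, so $F[P_i]=\{P_i\}\cup\ell_i$ (Corollary \ref{fixed locus}). First I would record how $\sigma_i$ acts on $\Delta_n$. Since $\sigma_i\in\mathrm{Aut}(C)$ and $G[\sigma_i(Q)]=\sigma_iG[Q]\sigma_i^{-1}$, the map $\sigma_i$ permutes $\Delta_n$. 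Moreover every nontrivial power of $\sigma_i$ has the same fixed locus $\{P_i\}\cup\ell_i$, and $\Delta_n\cap\ell_i=\emptyset$ (if $Q\in\Delta_n\cap\ell_i$ then $Q\in C\cap F[P_i]\cap F[Q]$, which is empty by Proposition \ref{two quasi-Galois}(1) since the two orders $n,n$ are not coprime). Hence $\langle\sigma_i\rangle$ acts freely on $\Delta_n\setminus\{P_i\}$, so $k\equiv 1\pmod n$; in particular $\delta[n]\ge 2$ forces $\delta[n]\ge n+1\ge 4$.

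Next I reduce to a single line. The reflection $\sigma_i$ fixes every line through its center $P_i$ (in the coordinates of Fact \ref{standard form} it is $(X:Y:Z)\mapsto(\zeta X:Y:Z)$), so for $j\ne i$ the whole $\langle\sigma_i\rangle$-orbit of $P_j$ lies on $L:=\overline{P_iP_j}$. Set $S:=\Delta_n\cap L$ and let $G_L\subseteq\mathrm{PGL}(2,K)$ be the finite group generated by the restrictions $\sigma_m|_L$ for $P_m\in S$ (these make sense, as such $\sigma_m$ fix $L$). By the orbit argument, $G_L$ acts on $S$ with each point carrying a stabilizer of order divisible by $n$, the action is faithful ($|S|\ge n+1\ge 3$), and $|S|\equiv 1\pmod n$. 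Applying Fact \ref{PGL}: a finite subgroup of $\mathrm{PGL}(2,K)$ in which at least four points have stabilizer of order a multiple of $n\ge 3$ cannot be cyclic or dihedral (those have at most two such points), and matching the point-stabilizer orders and special-orbit sizes of $A_4,S_4,A_5$ against $|S|\equiv 1\pmod n$ leaves only $G_L=A_4$, $n=3$ and $|S|=4$. Thus $\delta[n]\in\{0,1\}$ for every $n\ge 4$, and every line joining two points of $\Delta_3$ meets $\Delta_3$ in exactly four points forming a tetrahedral $A_4$-orbit.

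The remaining and \emph{main obstacle} is the global bound $\delta[3]\le 4$, i.e.\ excluding a fifth point off $L$; the difficulty is that the per-line analysis does not by itself prevent a large non-collinear configuration. I would argue that if some $P_5\in\Delta_3\setminus L$ existed, then applying $\sigma_5$ and $\sigma_5^2$ to the four points of $\Delta_3\cap L$ (their orbits lie on lines through $P_5$, none equal to $L$) produces twelve further distinct points, forcing $k\ge 13$, and by the previous paragraph \emph{every} connecting line of $\Delta_3$ then carries exactly four points. Such a complex configuration---non-collinear, with no line of multiplicity two or three---is forbidden by the Hirzebruch inequality $t_2+\tfrac34 t_3\ge k$ for $k$ points of $\mathbb{P}^2$ not forming a near-pencil (applicable in characteristic zero via the Lefschetz principle), whose left-hand side here vanishes. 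This contradiction gives $\delta[3]\le 4$; with $k\equiv 1\pmod 3$ and $k\ge 4$ we obtain $\delta[3]=4$ whenever $\delta[3]\ge 2$, so $\delta[n]\in\{0,1,4\}$ with the value $4$ possible only for $n=3$.

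Finally, for the degree condition when $\delta[3]=4$, I would use that the four points lie on a line $L$ with $G_L=A_4$ acting on $L$, so the intersection $C\cap L$ is $A_4$-invariant. Every orbit of $A_4\subset\mathrm{PGL}(2,K)$ on $\mathbb{P}^1$ has even size ($4$, $6$ or $12$), and intersection multiplicities are constant along orbits, so $d=\sum_{Q\in C\cap L}I_Q(C,L)$ is even. Since $n=3$ divides $\deg\pi_P=d-1$, we also have $d\equiv 1\pmod 3$, and the two congruences give $d\equiv 4\pmod 6$, i.e.\ $d=6m+4$. I expect the third paragraph to be the delicate one, as it is where a local, line-by-line argument must be upgraded to a genuine global restriction.
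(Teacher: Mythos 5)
Your proposal is correct, and its first two paragraphs follow essentially the same path as the paper: the free action of $G[P_i]$ on $\Delta_n\setminus\{P_i\}$ (the paper likewise gets $\sigma(P_2)\ne P_2$ from Proposition \ref{two quasi-Galois}(1)), the reduction to the line through two of the points, and the elimination of $n\ge 4$ together with the identification of the four-point $A_4$-configuration for $n=3$ via Fact \ref{PGL}; the paper phrases this case analysis through Sylow subgroups and the counts $5(5m+1)=60$, $3(3m+1)\in\{12,24,60\}$, and the three cyclic subgroups of order $4$ in $S_4$, which is equivalent to your orbit-size matching. Where you genuinely diverge is the global bound $\delta[3]\le 4$ and the congruence on $d$. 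The paper proves Lemma \ref{four quasi-Galois} by an explicit matrix computation: any line carrying four order-three quasi-Galois points equals $F[Q]\setminus\{Q\}$ for an outer quasi-Galois point $Q$ of even order; two four-point lines meeting at an inner quasi-Galois point then contradict Proposition \ref{two quasi-Galois}(2), and $|G[Q]|$ even dividing $\deg\pi_Q=d$ gives $d$ even, whence $d\equiv 4\pmod 6$ since $3\mid d-1$. You instead exclude the putative $k\ge 13$ configuration in which every connecting line has multiplicity exactly four (combinatorially this is precisely where ${\rm PG}(2,3)$ would appear, so a genuinely global input is needed, as you rightly observe) by the dual Hirzebruch inequality $t_2+\frac{3}{4}t_3\ge k$, and you derive evenness of $d$ from the parity of all $A_4$-orbits on $\mathbb{P}^1$ (sizes $4$, $6$, $12$). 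Both routes are valid: the paper's is elementary and self-contained, and the constructed point $Q$ is reused downstream (in Theorem \ref{outer n=3} and the sextic classification), whereas yours imports the Bogomolov--Miyaoka--Yau-based Hirzebruch bound (legitimate in characteristic zero via the Lefschetz principle, as you say) and buys a shorter, purely configurational exclusion that bypasses the matrix computation. Two small repairs: the $\langle\sigma_5\rangle$-orbits of the four collinear points contain twelve points in total, i.e.\ eight \emph{new} points besides $P_5$, so ``twelve further distinct points'' should read ``twelve orbit points, which together with $P_5$ give $k\ge 13$''; and when invoking Hirzebruch you should also record that $t_r=0$ for $r\ge 5$ (every connecting line has exactly four points), so the inequality degenerates to $0\ge k$, the desired contradiction.
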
 

\begin{proof}
Let $P_1$ and $P_2 \in C$ be quasi-Galois points with $|G[P_1]|=|G[P_2]|=n$, and let $\ell=\overline{P_1P_2}$. 
Note that $\sigma(\ell)=\ell$ for each $\sigma \in G[P_i]$ for $i=1,2$. 
Let 
$$G :=\{ \sigma \in {\rm Aut}(C) \ | \ \sigma(\ell)=\ell\} \subset {\rm PGL}(3, K), $$
and let $\varphi: G \rightarrow {\rm Aut}(\ell) \cong {\rm PGL}(2, K)$ be the homomorphism defined by $\sigma \mapsto \sigma |_\ell$. 
Since $\sigma(P_2) \ne P_2$ for each element $\sigma \in G[P_1] \setminus \{1\}$, we have $mn+1$ quasi-Galois points $P_1, P_2, \ldots, P_{mn+1}$ on the line $\ell$ for some integer $m$.  
Note that the restriction of $\varphi$ over $G[P_i]$ is injective for each $i$. 
By Fact \ref{PGL}, $\varphi(G)=A_4$, $S_4$ or $A_5$. 
Since the stabilizer subgroup $\varphi(G)(P_i)$ of $\varphi(G)$ acts on the projective line $\ell$, $\varphi(G)(P_i)$ is a cyclic group such that
$$ n \le |\varphi(G)(P_i)| \le 5, $$
for each $i$. 

Assume that $n=5$. 
Then $|\varphi(G)(P_i)|=5$ and $\varphi(G)\cong A_5$. 
Since $\varphi(G)(P_i)$ is a Sylow $5$-group, $\varphi(G)$ acts on the set $\{P_i\}$ transitively. 
Therefore, 
$$ 5(5m+1)=60 $$
holds. 
This is a contradiction. 

Assume that $n=4$. 
Then $|\varphi(G)(P_i)|=4$ and $\varphi(G) \cong S_4$.  
Note that $S_4$ has exactly three cyclic subgroups of order $4$. 
Since $\varphi(G)$ has at least $5$ cyclic subgroups of order $4$, this is a contradiction.  

Assume that $n=3$. 
Then $|\varphi(G)(P_i)|=3$.  
Since $\varphi(G)(P_i)$ is a Sylow $3$-group, $\varphi(G)$ acts on the set $\{P_i\}$ transitively. 
Therefore, 
$$ 3(3m+1)=12, 24 \mbox{ or } 60. $$
This implies that $m=1$.  

We have to show that $\ell$ is a unique line containing exactly four quasi-Galois points on $C$, in the case where $n=3$. 
By Lemma \ref{four quasi-Galois} below, it is inferred that for each four quasi-Galois points on a line $\ell$, there exists a quasi-Galois point $Q$ such that $\ell=F[Q] \setminus \{Q\}$.  
Assume that $\delta[3] \ge 5$. 
Then there exist two lines $\ell$ and $\ell'$ containing four quasi-Galois points such that the point $P \in \ell \cap \ell'$ is a quasi-Galois point on $C$. 
Then there exist two quasi-Galois points $Q$ and $Q'$ such that $\ell=F[Q] \setminus \{Q\}$ and $\ell'=F[Q'] \setminus \{Q'\}$.  
This implies that $P \in F[Q] \cap F[Q']$, and hence, this is a contradiction to Proposition \ref{two quasi-Galois}(2). 
\end{proof}

\begin{lemma} \label{four quasi-Galois}
Let $\ell$ be a line containing four points $P_1, P_2, P_3$ and $P_4 \in \mathbb{P}^2$ with $|G[P_i]|=3$ for each $i$. 
If the group $\langle G[P_1], G[P_2] \rangle$ acts on the set $\{P_1, P_2, P_3, P_4\}$, 
then there exists a point $Q \notin \ell$ such that $|G[Q]|=2m$ for some integer $m \ge 1$, and $F[Q]\setminus \{Q\}=\ell$. 
Furthermore, $Q \in \mathbb{P}^2 \setminus C$ and $d$ is even.  
\end{lemma}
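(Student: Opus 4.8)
The plan is to produce $Q$ as the center of a homology lying in the kernel of the restriction-to-$\ell$ map. Write $G:=\langle G[P_1],G[P_2]\rangle$ (a finite group, as ${\rm Aut}(C)$ is finite) and let $\varphi\colon G\to{\rm Aut}(\ell)\cong{\rm PGL}(2,K)$ be $\sigma\mapsto\sigma|_\ell$; here $\sigma(\ell)=\ell$ for every $\sigma\in G[P_i]$ because such $\sigma$ preserves each line through $P_i\in\ell$. In the standard form of Fact \ref{standard form} the center $P_i=(1:0:0)$ is not on its axis $\{X=0\}$, whereas $P_i\in\ell$, so $\ell\neq F[P_i]\setminus\{P_i\}$; hence no nontrivial element of $G[P_i]$ fixes $\ell$ pointwise and $\varphi|_{G[P_i]}$ is injective. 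First I would show $\varphi(G)\cong A_4$. Since $G$ stabilizes $\{P_1,\dots,P_4\}$ and $\varphi$ is faithful on these four points (an element of ${\rm PGL}(2,K)$ fixing three points is trivial), the order-three element $\varphi(\sigma_i)$ must be a $3$-cycle on the three points other than $P_i$: it fixes $P_i$, and if it fixed a second $P_j$ the remaining two points would have to lie in a length-three orbit, which is impossible. So the image of $\varphi(G)$ in ${\rm Sym}_4$ is generated by two independent $3$-cycles, hence lies in and equals $A_4$; faithfulness then gives $\varphi(G)\cong A_4$.

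Next comes the key construction. Choose $u_1,u_2\in G$ mapping to two distinct nontrivial elements of the Klein four-subgroup $V\trianglelefteq A_4=\varphi(G)$. Fixing coordinates with $\ell=\{Z=0\}$, every element of $G$ has block form $\left(\begin{smallmatrix}A&\ast\\0&\delta\end{smallmatrix}\right)$, and $\varphi$ reads off $[A]\in{\rm PGL}(2,K)$. Because the preimage of $V$ in ${\rm SL}(2,K)$ is the quaternion group, the $2\times2$ blocks anticommute and $[A_{u_1},A_{u_2}]=-I_2$, independently of the chosen lifts. Hence $\tau:=[u_1,u_2]$ has upper-left block $-I_2$ and lower-right entry $1$, so $\tau=\left(\begin{smallmatrix}-I_2&w\\0&1\end{smallmatrix}\right)$ is a nontrivial involution with $\varphi(\tau)=1$; that is, $\tau$ is a homology fixing $\ell$ pointwise whose isolated center is a point $Q\notin\ell$. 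This single computation shows at once that $\ker\varphi\neq\{1\}$ and exhibits $Q$.

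From here the remaining assertions follow formally. As $\tau$ is a homology with center $Q$ it preserves every line through $Q$, so $\tau\in G[Q]$ and $Q$ is a quasi-Galois point with $F[Q]\setminus\{Q\}={\rm Fix}(\tau)\setminus\{Q\}=\ell$ by Corollary \ref{fixed locus}; since $G[Q]$ is cyclic (Fact \ref{standard form}) and contains the involution $\tau$, its order is $2m$. The group $\ker\varphi$, being a finite subgroup of the pointwise stabilizer of $\ell$ (isomorphic to $K^2\rtimes K^\ast$, whose unipotent part is torsion-free in characteristic zero), is cyclic; thus $\tau$ is its unique involution, $\langle\tau\rangle\trianglelefteq G$, and every $g\in G$ commutes with $\tau$ and so preserves ${\rm Fix}(\tau)=\{Q\}\cup\ell$. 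As $g(\ell)=\ell$ and $Q\notin\ell$, this forces $g(Q)=Q$, so $G$ fixes $Q$. If $Q\in C$, then $G$ would fix a point of $C$ and be cyclic by Fact \ref{total-cyclic}, contradicting $\varphi(G)\cong A_4$; hence $Q\in\mathbb{P}^2\setminus C$. Finally, Proposition \ref{tangent1} applied to $Q$ gives ${\rm Fix}(\tau)\cap C=\ell\cap C$ consisting of exactly $d$ points, and Riemann--Hurwitz for the degree-two quotient $C\to C/\langle\tau\rangle$ forces the number of fixed points of the involution $\tau$ to be even; therefore $d$ is even.

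I expect the main obstacle to be the construction in the second paragraph, namely showing $\ker\varphi\neq\{1\}$. This is exactly the statement that a faithful $A_4$ preserving $\ell$ cannot have all its order-three elements realized as homologies, and the quaternion-group identity $[A_{u_1},A_{u_2}]=-I_2$ is what drives it. Everything afterward is a formal consequence once $\tau$ and $Q$ are in hand.
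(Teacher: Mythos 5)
Your proposal is correct, and it establishes the lemma by a genuinely different mechanism than the paper's proof. The paper proceeds by explicit computation: it normalizes coordinates so that $P_1=(1:0:0)$, $\sigma_1={\rm diag}(\omega,1,1)$ and $Q:=F[P_1]\cap F[P_2]=(0:0:1)$, solves for the matrix of a generator $\sigma_2\in G[P_2]$ from its prescribed action on the four points and on $Q$ (finding $\alpha=1/(2\omega^2+1)$), and then checks directly that $(\sigma_2\sigma_1^2)^2={\rm diag}(-1,-1,1)$, the involutory homology with center $Q$ and axis $\ell$. You obtain the same kind of element structurally: after showing $\varphi(G)\cong A_4$, you observe that the Klein four-subgroup $V$ cannot lift isomorphically along $\varphi$, because its preimage in ${\rm SL}(2,K)$ is the quaternion group, so the commutator of lifts of two distinct involutions of $V$ is a nontrivial $\tau=\left(\begin{smallmatrix}-I_2&w\\0&1\end{smallmatrix}\right)\in\ker\varphi$; your scaling-invariance remark correctly makes this independent of the chosen lifts. (The paper's computation is in fact an instance of your obstruction: $\varphi(\sigma_2\sigma_1^2)$ lies in $V$, and the nontriviality of its square is exactly the failure of $V$ to lift.) Your endgame also diverges: for $Q\notin C$ the paper notes that $T_QC$ would contain both $P_1$ and $P_2$, forcing $T_QC=\ell\not\ni Q$, whereas you prove $\ker\varphi$ is cyclic, hence $\tau$ is central, $G$ fixes $Q$, and Fact \ref{total-cyclic} contradicts the non-cyclic quotient $A_4$; for the parity of $d$ the paper needs only the remark of Section 2 that $|G[Q]|$ divides $\deg\pi_Q=d$, while your route through Proposition \ref{tangent1} and Riemann--Hurwitz is heavier but valid. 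Your approach buys a coordinate-free explanation of \emph{why} the involution must exist; the paper's buys explicit matrices ($\sigma_2$, $\alpha$, $\sigma_2\sigma_1^2$) that are reused verbatim later in the proof of Theorem \ref{sextic}. One cosmetic slip: in showing $\varphi(\sigma_i)$ is a $3$-cycle, the correct argument is that an order-three permutation fixing two of the four points must fix all four, contradicting injectivity of $\varphi|_{G[P_i]}$; your phrase about the remaining two points lying in a ``length-three orbit'' is garbled, though the conclusion stands.
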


\begin{proof}
Let $\omega^2+\omega+1=0$. 
We can take a system of coordinates so that $P_1=(1:0:0)$, $P_2=(1:-1:0)$, $P_3=(1:-\omega^2:0)$ and $P_4=(1:-\omega:0)$, and a generator of $G[P_1]$ is represented by 
$$ \sigma_1=\left(\begin{array}{ccc}
\omega & 0 & 0 \\
0 & 1 & 0 \\
0 & 0 & 1 
\end{array}\right).
$$
Furthermore, we can assume that the coordinate of the point $Q$ given by $F[P_1] \cap F[P_2]$ is $(0:0:1)$.  
Let $\sigma_2 \in G[P_2]$ be a generator. 
By the condition that $\sigma_2(Q)=Q$, $\sigma_2(P_2)=P_2$, $\sigma_2(P_1)=P_4$, $\sigma_2(P_3)=P_1$,  
it follows that $\sigma_2$ is represented by
$$\left(\begin{array}{ccc} 
-\omega \alpha & 2\omega^2 \alpha & 0 \\
\omega^2 \alpha & \alpha & 0\\
0 & 0 & 1
\end{array}\right) $$
for some $\alpha \in K$. 
By $\sigma_2^*(x+y)=x+y$, $\alpha=1/(2\omega^2+1)$ follows. 
Note that $\alpha^2=-1/3$. 
Then it follows that
$$ (\sigma_2\sigma_1^2)^2=
\left(\begin{array}{ccc}
-1 & 0 & 0 \\
0 & -1 & 0 \\
0 & 0 & 1 \\
\end{array}\right),  $$
and hence, $G[Q]$ contains an element of order $2$. 
Therefore, $|G[Q]|$ is even and $F[Q] \setminus \{Q\}=\ell$. 
If $Q \in C$, the tangent line at $Q$ contains $P_1$ and $P_2$. 
This is a contradiction. 
Therefore, $Q \in \mathbb{P}^2 \setminus C$ and $d$ is even.  
\end{proof}

\begin{corollary}
We have 
$$ \delta[\ge 3]=0, 1, 2 \mbox{ or } 4. $$
Furthermore, $\delta[\ge 3]=4$ only if $\delta[\ge 3]=\delta[3]=4$. 
\end{corollary}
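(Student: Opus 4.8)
The plan is to write $\delta[\ge 3]=\sum_{n\ge 3}\delta[n]$ and feed it into Theorem \ref{inner}, which pins each summand down: $\delta[n]\in\{0,1\}$ for $n\ge 4$, while $\delta[3]\in\{0,1,4\}$. The whole argument then rests on one bookkeeping device, which I would record first as a \emph{fixing principle}: if $P'$ is an inner quasi-Galois point whose order $m$ satisfies $\delta[m]=1$, then for any quasi-Galois point $P$ and any $\sigma\in G[P]$ the point $\sigma(P')$ is again a quasi-Galois point of order $m$, since $G[\sigma(P')]=\sigma G[P']\sigma^{-1}$; by uniqueness $\sigma(P')=P'$, so $G[P]$ fixes $P'$. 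With this in hand I would split according to whether some $\delta[n]$ attains the value $4$.

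In the case $\delta[3]=4$ (by Theorem \ref{inner} the only way any summand reaches $4$), I must rule out inner quasi-Galois points of order $\ge 4$. Suppose $P$ had order $n\ge 4$; then $\delta[n]=1$. Picking two of the four order-$3$ points $P_1,P_2$ and applying the fixing principle with $P'=P$ shows that both $G[P_1]$ and $G[P_2]$ fix $P$. Hence $H:=\langle G[P_1],G[P_2]\rangle$ fixes $P\in C$, so $H$ is cyclic by Fact \ref{total-cyclic}; but $G[P_1]$ and $G[P_2]$ are two \emph{distinct} subgroups of order $3$ by Corollary \ref{two groups}, which a cyclic group cannot contain. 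This contradiction gives $\delta[\ge 3]=\delta[3]=4$ in this case.

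In the remaining case every $\delta[n]\le 1$, so distinct inner quasi-Galois points of order $\ge 3$ have distinct orders, and I would show there are at most two of them. Assuming three, say $P_1,P_2,P_3$ of orders $n_1,n_2,n_3\ge 3$, the fixing principle gives $P_j\in F[P_i]$ for all $i\ne j$; since $P_j\ne P_i$, Corollary \ref{fixed locus} places $P_j$ on the axis of $P_i$, and because the axis of $P_i$ never contains $P_i$ the three points cannot be collinear and so form a genuine triangle. Passing to the coordinate triangle, the three generators become $\mathrm{diag}(\zeta_1,1,1)$, $\mathrm{diag}(1,\zeta_2,1)$, $\mathrm{diag}(1,1,\zeta_3)$, and Fact \ref{standard form}(3) applied to each $P_i$ forces every monomial $X^aY^bZ^c$ of the defining equation to satisfy $n_1\mid a$, $n_2\mid b$, $n_3\mid c$. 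Dehomogenizing at $P_1\in C$, the nonconstant terms then all have $Y,Z$-degree at least $\min(n_2,n_3)\ge 3$, so the tangent cone has multiplicity $\ge 3$ and $P_1$ is singular, contradicting smoothness. Thus $\delta[\ge 3]\le 2$, and combining the two cases yields $\delta[\ge 3]\in\{0,1,2,4\}$ with the value $4$ occurring only when $\delta[3]=4$.

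The hard part is this last case. Group theory alone does not suffice there: the fixing principle together with Corollary \ref{two groups} in fact forces $n_1,n_2,n_3$ to be pairwise coprime, so $\langle G[P_1],G[P_2],G[P_3]\rangle$ is an honest cyclic group of order $n_1n_2n_3$ and Fact \ref{total-cyclic} yields no contradiction. The contradiction has to be geometric, coming from the collision of the three simultaneous standard forms of Fact \ref{standard form} with smoothness, i.e. from the fact that a smooth plane curve cannot pass through a coordinate vertex whose adapted equation involves only high powers of the two opposite variables. (I note that in the case $\delta[3]=4$ one could alternatively route the argument through the point $Q$ produced by Lemma \ref{four quasi-Galois}, but the cyclic-group contradiction above is shorter and avoids it.)
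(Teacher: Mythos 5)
Your proof is correct, but its endgame is genuinely different from the paper's. The shared core is your ``fixing principle'': the paper's proof opens with exactly this observation, assuming $\delta[\ge 3]\ge 3$ and $\delta[\ge 4]\ge 1$ and deducing from Theorem \ref{inner} that the point $P_3$ of order $\ge 4$, being unique of its order, satisfies $P_3 \in F[P_1]\cap F[P_2]$. From there the paper finishes by geometry it has already packaged: by Fact \ref{index}(2) both $P_1$ and $P_2$ lie on $T_{P_3}C$, so the three points are collinear; since $T_{P_3}C$ meets the axis of $P_3$ in only one point, some $P_i\notin F[P_3]$, and conjugating that $P_i$ by an element of $G[P_3]$ yields $P_i'\ne P_i$ of the same order with $P_3\in C\cap F[P_i]\cap F[P_i']$, contradicting Proposition \ref{two quasi-Galois}(1). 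You instead split into the case $\delta[3]=4$ (handled by Fact \ref{total-cyclic} and Corollary \ref{two groups}, which is in effect the first paragraph of the paper's proof of Proposition \ref{two quasi-Galois}) and the case of pairwise coprime orders, where you re-derive a geometric contradiction from scratch via the reflection triangle: the non-collinearity, the simultaneous diagonal generators, the divisibility of exponents, and the multiplicity $\ge \min(n_2,n_3)\ge 3$ at the vertex $P_1\in C$ all check out. However, your stated rationale for this computation --- that group theory fails for coprime orders, so the geometric contradiction ``has to'' be built anew --- overlooks that the first assertion of Proposition \ref{two quasi-Galois}(1), namely $C\cap F[P_1]\cap F[P_2]\subset\{P_1,P_2\}$, carries no coprimality hypothesis and applies verbatim: your fixing principle places $P_3\in C\cap F[P_1]\cap F[P_2]\setminus\{P_1,P_2\}$, a one-line contradiction that also subsumes your $\delta[3]=4$ case. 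The geometry you supply by hand (tangent lines through the fixed point, etc.) is precisely what that proposition's proof already encodes. Two fillable gaps you should make explicit: Fact \ref{total-cyclic} needs $\langle G[P_1],G[P_2]\rangle$ finite, which holds because ${\rm Aut}(C)$ is finite for smooth curves of degree $\ge 4$ (the paper is equally implicit about this); and invoking Fact \ref{standard form}(3) in your pre-chosen coordinates requires the small remark that invariance of $F$ up to scalar under ${\rm diag}(\zeta_1,1,1)$ forces all $X$-exponents to be congruent modulo $n_1$, with common residue $0$ since otherwise $X$ would divide $F$, contradicting smoothness. As for what each route buys: the paper's is shorter because it leans on Proposition \ref{two quasi-Galois}; yours, at the cost of redundancy, isolates a self-contained and slightly stronger reusable fact --- a smooth plane curve cannot pass through three of its own points lying pairwise on one another's reflection axes.
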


\begin{proof}
Assume that $\delta[\ge 3] \ge 3$ and $\delta[\ge 4] \ge 1$. 
Let $P_1, P_2, P_3 \in C$ be different points with $|G[P_1]| \ge 3$, $|G[P_2]| \ge 3$ and $|G[P_3]| \ge 4$. 
It follows from Theorem \ref{inner} that $P_3 \in F[P_1] \cap F[P_2]$. 
By Lemma \ref{index}(2), $P_1, P_2 \in T_{P_3}C$. 
In this case, points $P_1, P_2$ and $P_3$ are collinear. 
By Corollary \ref{fixed locus}, $P_1 \not\in F[P_3]$ or $P_2 \not\in F[P_3]$. 
Assume that $P_1 \not\in F[P_3]$. 
In this case, there exists a point $P_1' \in C$ such that $|G[P_1']|=|G[P_1]|$ and $P_3 \in F[P_1']$. 
By Proposition \ref{two quasi-Galois}, this is a contradiction. 
\end{proof}

We consider the number of quasi-Galois points in $\mathbb{P}^2 \setminus C$. 
To do this, we introduce the notion of ``$G$-pairs''. 
Let $P, P' \in \mathbb{P}^2 \setminus C$ be points such that $P \ne P'$ and $|G[P]|$ and $|G[P']|$ is divisible by $n \ge 2$.  
We call the pair $(P, P')$ a {\it $G$-pair with respect to $n$} if $\sigma(P')=P'$ and $\sigma'(P)=P$ for generators $\sigma \in G[P]$ and $\sigma' \in G[P']$. 
By Corollary \ref{fixed locus}, the definition does not depend on the choice of generators. 

\begin{lemma} \label{pair 1}
Let $n \ge 2$, let $P_1, P_2 \in \mathbb{P}^2 \setminus C$ be different points such that $n$ divides $G[P_1]$ and $G[P_2]$, and let $\sigma_i \in G[P_i]$ be a generator  for $i=1, 2$. 
If $\sigma_1(P_2)=P_2$, then $\sigma_2(P_1)=P_1$. 
In particular, $(P_1, P_2)$ is a $G$-pair with respect to $n$. 
\end{lemma}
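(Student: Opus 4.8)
Looking at this lemma, I need to prove symmetry in the G-pair relationship. Let me sketch my approach.

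The plan is to use the standard form from Fact \ref{standard form} to place $P_1$ at a convenient coordinate, then analyze how the generator $\sigma_1$ acts and what the hypothesis $\sigma_1(P_2)=P_2$ forces on the location of $P_2$.

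First I would normalize coordinates so that $P_1=(1:0:0)$ and $\sigma_1$ is represented by the diagonal matrix $A_{\sigma_1}=\mathrm{diag}(\zeta,1,1)$ with $\zeta$ a primitive $n$-th root of unity, as guaranteed by Fact \ref{standard form}. By Corollary \ref{fixed locus}, the fixed locus is $F[P_1]=\{P_1\}\cup\{X=0\}$. The hypothesis $\sigma_1(P_2)=P_2$ means $P_2\in F[P_1]$, and since $P_2\ne P_1$ lies outside $C$ while $P_1$ could be anywhere, the relevant case places $P_2$ on the line $\{X=0\}$; write $P_2=(0:b:c)$ for suitable $b,c$.

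Next I would examine the generator $\sigma_2\in G[P_2]$. The key point is that $F[P_2]=\{P_2\}\cup\{\text{a line }m\}$ by Corollary \ref{fixed locus}, and I must show $P_1\in F[P_2]$, i.e. $\sigma_2(P_1)=P_1$. Here I expect to use the symmetry of the situation together with Corollary \ref{two groups}: the line $\{X=0\}$ equals $F[P_1]\setminus\{P_1\}$, and since $P_2$ lies on it, I would argue that $\sigma_2$ must preserve this configuration. The cleanest route is to consider the group $\langle\sigma_1,\sigma_2\rangle$ restricted to the line $\overline{P_1P_2}$ and show that $\sigma_2$ fixes $P_1$ by analyzing the fixed points of $\sigma_2$ on that line, noting that $\sigma_2$ already fixes $P_2$ and the intersection point of $m$ with $\overline{P_1P_2}$.

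The hard part will be ruling out the possibility that $\sigma_2$ moves $P_1$ to some other point of $F[P_1]\setminus\{P_1\}$. The essential obstruction to overcome is showing that the line $\{X=0\}=F[P_1]\setminus\{P_1\}$ is itself $\sigma_2$-stable: since $\sigma_1$ fixes $P_2$, the conjugate $\sigma_2\sigma_1\sigma_2^{-1}$ fixes $\sigma_2(P_2)=P_2$, and I would compare the fixed loci $F[P_1]$ and $F[\sigma_2(P_1)]=\sigma_2(F[P_1])$; if $\sigma_2(P_1)\ne P_1$, then by Corollary \ref{two groups} the two distinct quasi-Galois points $P_1$ and $\sigma_2(P_1)$ give elements generating a group fixing a common point, forcing a cyclic group of order $n^2$ with a non-unique subgroup of order $n$ exactly as in the proof of Proposition \ref{two quasi-Galois}, a contradiction. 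This reduction to the counting argument already established for Proposition \ref{two quasi-Galois} is what I expect to make the proof go through; once $\sigma_2(P_1)=P_1$ is established, the conclusion that $(P_1,P_2)$ is a $G$-pair with respect to $n$ is immediate from the definition.
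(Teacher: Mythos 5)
Your reduction to Proposition \ref{two quasi-Galois} contains a genuine gap. The counting argument in that proposition (via Fact \ref{total-cyclic}: a group fixing a common point is cyclic, hence would be cyclic of order $n^2$ with two subgroups of order $n$, contradiction) is only available when the common fixed point lies \emph{on the curve} $C$ --- Fact \ref{total-cyclic} is the statement that the stabilizer of a point of $C$ in ${\rm Aut}(C)$ is cyclic, and it fails for points of $\mathbb{P}^2\setminus C$. In your setup, if $\sigma_2(P_1)=P_1'\ne P_1$, the only common fixed point you exhibit for $G[P_1]$ and $G[P_1']=\sigma_2 G[P_1]\sigma_2^{-1}$ is $P_2$, which by hypothesis lies in $\mathbb{P}^2\setminus C$; no common fixed point on $C$ is produced, so the cyclicity step never gets off the ground. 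Worse, the argument proves too much: in an actual $G$-pair (which exists, e.g.\ for Fermat curves, by Proposition \ref{pair 2}) the groups $G[P]$ and $G[P']$ generate a group isomorphic to $(\mathbb{Z}/n\mathbb{Z})^2$ fixing the point $P''\notin C$, a non-cyclic group fixing a common point of $\mathbb{P}^2$ --- exactly the configuration your contradiction would forbid. Note also that, abstractly, $\sigma_2$ is a homology with center $P_2$ and axis $F[P_2]\setminus\{P_2\}$; its fixed locus is exactly $\{P_2\}\cup(F[P_2]\setminus\{P_2\})$, so without input from the curve there is simply no reason for $\sigma_2$ to fix $P_1$, and your analysis of fixed points on $\overline{P_1P_2}$ cannot close this.

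The missing ingredient is the tangency data from Proposition \ref{tangent1}, which is how the paper proceeds: the line $F[P_1]\setminus\{P_1\}$ contains exactly $d$ points $Q_1,\ldots,Q_d\in C$ with $\overline{P_1Q_i}=T_{Q_i}C$ for all $i$, so $P_1$ is characterized as the common point of these $d$ concurrent tangent lines. Since this line passes through $P_2$, it is $\sigma_2$-stable, so $\sigma_2$ permutes the $Q_i$; as $\sigma_2$ maps tangent lines to tangent lines, $\sigma_2(\overline{P_1Q_1})=\overline{P_1Q_i}$ and $\sigma_2(\overline{P_1Q_2})=\overline{P_1Q_j}$ for some $i\ne j$, whence $\sigma_2(P_1)=\sigma_2(\overline{P_1Q_1}\cap\overline{P_1Q_2})\subset\overline{P_1Q_i}\cap\overline{P_1Q_j}=\{P_1\}$. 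Your coordinate normalization is fine as far as it goes, but the argument must be completed along these geometric lines rather than by the group-theoretic counting you propose.
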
 

\begin{proof} 
By the assumption, $P_2 \in F[P_1]\setminus \{P_1\}$. 
It follows from Corollary \ref{fixed locus} and Proposition \ref{tangent1} that the set $F[P_1]\setminus \{P_1\}$ is a line containing $d$ points $Q_1, \ldots, Q_d \in C$ with $\overline{P_1Q_i}=T_{Q_i}C$ for each $i$.
Since $F[P_1] \setminus \{P_1\}$ is a line passing through $P_2$, it follows that $\sigma_2(Q_1)=Q_i$ and $\sigma_2(Q_2)=Q_j$ for some $i, j$. 
Since $\overline{P_1Q_1}$ and $\overline{P_1Q_i}$ are tangent lines at $Q_1$ and $Q_i$ respectively, $\sigma_2(\overline{P_1Q_1})=\overline{P_1Q_i}$. 
Then $\sigma_2(\overline{P_1Q_1} \cap \overline{P_1Q_2}) \subset \overline{P_1Q_i} \cap \overline{P_1Q_j}=\{P_1\}$. 
It follows that $\sigma_2(P_1)=P_1$.  
\end{proof} 

\begin{proposition} \label{pair 2}
There exists a $G$-pair $(P, P')$ with respect to $n$, if and only if $C$ is projectively equivalent to the curve defined by  
$$ g(x^n, y^n)=0 $$
for some polynomial $g$. 
In this case, there exists a point $P'' \in \mathbb{P}^2 \setminus (C \cup \overline{PP'})$ such that pairs $(P, P'')$ and $(P', P'')$ are $G$-pairs. 
In particular, $\delta'[\ge n] \ge 3$. 
\end{proposition}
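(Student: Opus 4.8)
The plan is to prove the equivalence by setting up coordinates adapted to the two generators, and then to produce the third point explicitly as the intersection of the two fixed lines.

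For the ``only if'' direction, suppose $(P,P')$ is a $G$-pair with respect to $n$. By Fact \ref{standard form} applied to $P$, I would choose coordinates so that $P=(1:0:0)$ and $G[P]$ contains $\sigma=\mathrm{diag}(\zeta,1,1)$ of order $n$; by Corollary \ref{fixed locus}, $F[P]=\{P\}\cup\{X=0\}$. The $G$-pair condition $\sigma(P')=P'$ means $P'\in F[P]\setminus\{P\}=\{X=0\}$, so I may take $P'=(0:1:0)$; the condition $\sigma'(P)=P$ means $P$ lies on the fixed line $\ell'$ of a generator $\sigma'$ of $G[P']$. Declaring the intersection of $\{X=0\}$ with $\ell'$ to be $(0:0:1)$ forces $\ell'=\{Y=0\}$, and hence $\sigma'=\mathrm{diag}(1,\zeta',1)$ of order $n$. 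Writing $F=\sum a_{k,l}X^kY^lZ^{d-k-l}$, invariance of $C$ under $\sigma$ and $\sigma'$ shows that every monomial satisfies $k\equiv r$ and $l\equiv s \pmod n$ for fixed residues $r,s$. Finally, Proposition \ref{tangent1} applied to $P$ (resp. to $P'$) produces $d$ points of $C$ on $\{X=0\}$ (resp. on $\{Y=0\}$), so $F(0,Y,Z)\not\equiv0$ and $F(X,0,Z)\not\equiv0$; this forces $r=s=0$, i.e. only exponents divisible by $n$ occur in $X$ and in $Y$. In the affine chart $Z=1$ this is exactly $g(x^n,y^n)=0$.

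For the ``if'' direction I would reverse this: given $F=\sum_{n\mid k,\,n\mid l}a_{k,l}X^kY^lZ^{d-k-l}$, the maps $\sigma=\mathrm{diag}(\zeta,1,1)$ and $\sigma'=\mathrm{diag}(1,\zeta,1)$ preserve $C$ and fix every line through $(1:0:0)$, resp. through $(0:1:0)$, so they lie in $G[(1:0:0)]$, resp. $G[(0:1:0)]$. The one point that needs care is that $P=(1:0:0)$ and $P'=(0:1:0)$ lie off $C$. Since $C$ is smooth it is irreducible, so $Z\nmid F$; hence $F$ has a monomial of $Z$-degree $0$, which forces $n\mid d$. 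Computing the gradient of $F$ at a coordinate vertex, the two ``transversal'' partials come from monomials of shape $X^{d-1}Y$ or $X^{d-1}Z$ (and their cyclic variants), all of which are excluded because $n\ge2$ and $n\mid d$; thus a vertex lying on $C$ would have vanishing gradient and be singular, which is impossible. Therefore $P,P'\notin C$, and since $P'\in F[P]$ and $P\in F[P']$ by Corollary \ref{fixed locus}, the pair $(P,P')$ is a $G$-pair with respect to $n$.

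For the second assertion I would take $P''=(0:0:1)$, the intersection of the lines $\ell=\{X=0\}=F[P]\setminus\{P\}$ and $\ell'=\{Y=0\}=F[P']\setminus\{P'\}$. After replacing $\sigma'$ by a suitable power I may assume $\zeta'=\zeta$, so that $\sigma\sigma'=\mathrm{diag}(\zeta,\zeta,1)$ is an automorphism of $C$ of order $n$ whose special point is $P''$ and whose fixed line is $\{Z=0\}$; it preserves the lines through $P''$, so it lies in $G[P'']$ and $n\mid|G[P'']|$. That $P''\notin C$ follows from Proposition \ref{tangent1}: if $P''\in C$, then since $P''\in\ell\cap\ell'$ that proposition would give $T_{P''}C=\overline{P P''}=\{Y=0\}$ and also $T_{P''}C=\overline{P'P''}=\{X=0\}$, which is absurd. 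Moreover $P''=(0:0:1)\notin\overline{PP'}=\{Z=0\}$. Since $\{Z=0\}=F[P'']\setminus\{P''\}$ contains $P$ and $P'$, while $P''\in\{X=0\}=F[P]\setminus\{P\}$ and $P''\in\{Y=0\}=F[P']\setminus\{P'\}$, Corollary \ref{fixed locus} shows that $(P,P'')$ and $(P',P'')$ are both $G$-pairs. The three distinct points $P,P',P''$ then all lie in $\mathbb{P}^2\setminus C$ with $|G[\cdot]|\ge n$, so $\delta'[\ge n]\ge3$. The main obstacle I anticipate is precisely the verification that these points lie off $C$: smoothness is essential there (through the gradient computation for $P,P'$ and through Proposition \ref{tangent1} for $P''$), and some care is needed in arranging $\zeta'=\zeta$ so that $\sigma\sigma'$ is genuinely a reflection centred at $P''$; the remaining steps are coordinate bookkeeping.
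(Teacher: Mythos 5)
Your proposal is correct and follows essentially the same route as the paper: normalize coordinates via Fact \ref{standard form} so that both generators become the diagonal reflections $\mathrm{diag}(\zeta,1,1)$ and $\mathrm{diag}(1,\zeta,1)$, read off the defining equation $g(x^n,y^n)=0$ from invariance under both, and obtain $P''=(0:0:1)$ from the product automorphism $\mathrm{diag}(\zeta,\zeta,1)\sim\mathrm{diag}(1,1,\zeta^{-1})$. Your only deviations are matters of completeness that work in your favor: you make explicit, via the gradient computation at the coordinate vertices and via Proposition \ref{tangent1}, that $P$, $P'$, $P''$ lie off $C$ (and that $n\mid d$), verifications the paper's proof leaves implicit even though the definition of a $G$-pair and the conclusion $\delta'[\ge n]\ge 3$ require them.
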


\begin{proof} 
We consider the if part. 
According to Fact \ref{standard form}, for the defining equation $g(x^n, y^n)=0$, it follows that $P=(1:0:0)$ and $P'=(0:1:0)$ form a $G$-pair with respect to $n$.

We prove the only-if part. 
Assume that $(P, P')$ be a $G$-pair with respect to $n$. 
By the assumption, $P' \in F[P]$ and $P \in F[P']$. 
By Fact \ref{standard form}, for a suitable system of coordinates, we can assume that $P=(1:0:0)$ and there exists an element $\sigma \in G[P]$ of order $n$ which is represented by the matrix 
$$A_{\sigma}=\left(\begin{array}{ccc} \zeta & 0 & 0 \\ 0 & 1 & 0 \\ 0 & 0 & 1 \end{array}\right),  
$$ 
where $\zeta$ is a primitive $n$-th root of unity. 
Then the line given by $F[P]\setminus \{P\}$ is defined by $X=0$. 
Since $P' \in F[P] \setminus \{P\}$, $P'=(0:1:a)$ for some $a \in K$. 
If we take the linear transformation $(X:Y:Z) \mapsto (X:Y:Z-aY)$, we can assume that $P'=(0:1:0)$. 
Then there exists an element $\sigma \in G[P']$ of order $n$ which is represented by the matrix 
$$A_{\sigma'}=\left(\begin{array}{ccc} 1 & 0 & 0 \\ a & \zeta & b \\ 0 & 0 & 1 \end{array}\right), $$ 
for some $a,b \in K$.  
Since the line given by $F[P']\setminus \{P'\}$ is defined by $aX+(\zeta-1)Y+bZ=0$ and $P \in F[P'] \setminus \{P'\}$, it follows that $a=0$. 
If we take 
$$ B=\left(\begin{array}{ccc} 1-\zeta & 0 & 0 \\ 0 & 1 & b \\ 0 & 0 & 1-\zeta \end{array}\right), $$
then 
$$ B^{-1}A_{\sigma_1}B=\left(\begin{array}{ccc} \zeta & 0 & 0 \\ 0 & 1 & 0 \\ 0 & 0 & 1 \end{array}\right), \ B^{-1}A_{\sigma_2}B=\left(\begin{array}{ccc} 1 & 0 & 0 \\ 0 & \zeta & 0 \\ 0 & 0 & 1 \end{array}\right).  $$
By taking the linear transformation represented by $B^{-1}$, the defining polynomial of $C$ is of the form  
$$ g(x^n, y^n)=0. $$
The assertion follows. 

In this case, the automorphism   
$$ 
\left(\begin{array}{ccc}
\zeta & 0  & 0 \\
0 & 1 & 0 \\
0 & 0 & 1 \\
\end{array} \right)
\times
\left(\begin{array}{ccc}
1 & 0  & 0 \\
0 & \zeta & 0 \\
0 & 0 & 1 \\
\end{array} \right)
=
\left(\begin{array}{ccc}
\zeta & 0  & 0 \\
0 & \zeta & 0 \\
0 & 0 & 1 \\
\end{array} \right)
\sim 
\left(\begin{array}{ccc}
1 & 0  & 0 \\
0 & 1 & 0 \\
0 & 0 & \zeta^{-1} \\
\end{array} \right)
$$
acts on $C$. 
Then the point $P''=(0:0:1)$ is a quasi-Galois point with $|G[P'']| \ge n$. 
We have $\delta'[\ge n] \ge 3$. 
\end{proof}

\begin{corollary} \label{pair 2'}
Let $d$ be even, $n=d/2$ and let $(P, P')$ be a $G$-pair.   
Then $C$ is projectively equivalent to the curve defined by  
$$ X^{2n}+Y^{2n}+Z^{2n}+aX^{n}Y^{n}+bY^nZ^n+cZ^nX^n=0, $$
where $a, b, c \in K$. 
\end{corollary}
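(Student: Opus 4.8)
The plan is to apply Proposition \ref{pair 2} to put $C$ into its normal form, to read off the finitely many monomials that can survive once the degree is fixed at $d = 2n$, and to finish by rescaling the coordinate axes so that the three pure powers acquire coefficient $1$.

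First I would apply Proposition \ref{pair 2} to the $G$-pair $(P, P')$, which is a $G$-pair with respect to $n = d/2$. This fixes a system of homogeneous coordinates in which $P = (1:0:0)$ and $P' = (0:1:0)$, and in which $C$ is defined by a homogeneous polynomial $F$ of degree $d = 2n$ every monomial $X^a Y^b Z^c$ of which satisfies $n \mid a$ and $n \mid b$; this is precisely the content of the normal form $g(x^n, y^n) = 0$ produced there. Moreover, the same proposition supplies a third point $P'' = (0:0:1) \in \mathbb{P}^2 \setminus (C \cup \overline{PP'})$, which I will use below.

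Second, I would enumerate the admissible monomials from the degree constraint. Writing $a + b + c = 2n$ with $a, b \ge 0$ both divisible by $n$ forces $a, b \in \{0, n, 2n\}$ and $a + b \le 2n$; the six resulting triples $(a,b,c)$ give exactly the monomials $X^{2n}$, $Y^{2n}$, $Z^{2n}$, $X^nY^n$, $Y^nZ^n$, $Z^nX^n$. Hence $F$ is a $K$-linear combination of these six monomials.

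Third, and this is the one point that requires an argument rather than mere bookkeeping, I would show that the coefficients of the three pure powers $X^{2n}$, $Y^{2n}$, $Z^{2n}$ are all nonzero. Evaluating $F$ at $(1:0:0)$, $(0:1:0)$, $(0:0:1)$ returns precisely these three coefficients, and the three coordinate points $P$, $P'$, $P''$ lie off $C$ by construction (the first two as members of a $G$-pair in $\mathbb{P}^2 \setminus C$, the third by Proposition \ref{pair 2}); so none of the three coefficients can vanish. With the three leading coefficients nonzero, I would rescale the axes via $X \mapsto \lambda X$, $Y \mapsto \mu Y$, $Z \mapsto \nu Z$: since $K$ is algebraically closed, suitable $2n$-th roots $\lambda, \mu, \nu$ bring the coefficients of $X^{2n}$, $Y^{2n}$, $Z^{2n}$ to a common value, and dividing the equation by that value yields
$$ X^{2n} + Y^{2n} + Z^{2n} + a X^n Y^n + b Y^n Z^n + c Z^n X^n = 0 $$
for suitable $a, b, c \in K$, as claimed. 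The main obstacle is therefore conceptual rather than computational: guaranteeing that the three pure powers are genuinely present so that the simultaneous normalization is legitimate, which is exactly what the off-curve points $P$, $P'$, $P''$ furnished by Proposition \ref{pair 2} ensure.
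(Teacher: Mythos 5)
Your proof is correct and follows essentially the same route as the paper, which also reduces to the normal form $g(x^n,y^n)=0$ of Proposition \ref{pair 2} and then normalizes the coefficients of the pure powers. The only difference is that you spell out the nonvanishing of the coefficients of $X^{2n}$, $Y^{2n}$, $Z^{2n}$ via $P, P', P'' \notin C$, a point the paper's one-line proof (``we can assume $\alpha=\gamma=1$'') leaves implicit.
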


\begin{proof}
Since $n=d/2$, by Proposition \ref{pair 2}, the defining polynomial of $C$ is of the form  
$$ F=X^{2n}+(aY^n+bZ^n)X^n+(\alpha Y^{2n}+\beta Y^nZ^n+\gamma Z^{2n}). $$
We can assume $\alpha=\gamma=1$. 
\end{proof}

We consider the case where there exist two quasi-Galois points $P_1, P_2 \in \mathbb P^2 \setminus C$. 
Let $\ell=\overline{P_1P_2}$, let
$$G :=\{ \sigma \in {\rm Aut}(C) \ | \ \sigma(\ell)=\ell \} \subset {\rm PGL}(3, K), $$
and let $\varphi: G \rightarrow {\rm Aut}(\overline{P_1P_2}) \cong {\rm PGL}(2, K)$ be the homomorphism defined by $\sigma \mapsto \sigma |_\ell$. 
Note that $\sigma(\ell)=\ell$ for each $\sigma \in G[P_i]$, and the induced homomorphism $G[P_i] \rightarrow \varphi(G[P_i])$ is injective, for $i=1, 2$.

\begin{theorem} \label{outer}
Assume that $n \ge 4$, and points $P_1$ and $P_2 \in \mathbb{P}^2 \setminus C$ are different quasi-Galois points with $|G[P_1]|=|G[P_2]|=n$.
Then there exists a point $P_1' \in \ell:=\overline{P_1P_2}$ such that $(P_1, P_1')$ is a $G$-pair with respect to $n$. 
Furthermore, the following hold. 
\begin{itemize}
\item[(1)] If $n \ge 6$, then $\delta'[\ge n]=3$, and each two of the three quasi-Galois points form a $G$-pair. 
Furthermore, $\delta'[\ge 3]=\delta'[\ge n]=3$. 
\item[(2)] If $n=5$, then $\#\Delta_5' \cap \ell=2$ or $12$. 
Furthermore, if $\#\Delta_5' \cap \ell=12$, then $\#\Delta_{\ge 5}' \cap (\mathbb{P}^2 \setminus \ell)=\#\Delta_{\ge 10}' \cap (\mathbb{P}^2 \setminus \ell)=1$. 
In particular, $\delta'[5]=2, 3$ or $12$. 
\item[(3)] If $n=4$, then $\#\Delta_4' \cap \ell=2$ or $6$. 
Furthermore, if $\#\Delta_4' \cap \ell=6$, then $\#\Delta_{\ge 4}' \cap (\mathbb{P}^2 \setminus \ell) =1$. 
In particular, $\delta'[4]=2, 3, 6$ or $7$. 
\end{itemize} 
\end{theorem}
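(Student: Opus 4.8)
The plan is to reduce everything to the structure of the finite group $\varphi(G)\subset{\rm PGL}(2,K)$, where $G={\rm Stab}_{{\rm Aut}(C)}(\ell)$. First I would record a general principle: for any line $\ell'$ through a quasi-Galois point $Q$, the whole group $G[Q]$ lies in ${\rm Stab}(\ell')$ and $\varphi'$ is injective on it. Indeed, by Corollary \ref{fixed locus} each $\sigma\in G[Q]$ fixes $Q$ and the line $m_Q:=F[Q]\setminus\{Q\}$ pointwise, hence fixes the two distinct points $Q$ and $\ell'\cap m_Q$ of $\ell'$, so $\sigma(\ell')=\ell'$; a nontrivial power fixing $\ell'$ pointwise would force $\ell'\subset\{Q\}\cup m_Q$, which is false. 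Applying this to $\ell=\overline{P_1P_2}$, both $\varphi(G[P_1])$ and $\varphi(G[P_2])$ are cyclic of order $n$, so by Fact \ref{PGL} and $n\ge4$ the group $\varphi(G)$ is cyclic, dihedral, $S_4$ (only if $n=4$) or $A_5$ (only if $n=5$); $A_4$ is excluded, having no element of order $\ge4$. For the first assertion I set $P_1':=\ell\cap m_1$, the second fixed point of $\varphi(G[P_1])$ on $\ell$. If $\varphi(G)$ is cyclic or dihedral it has a unique subgroup of order $n$, so $\varphi(G[P_1])=\varphi(G[P_2])$ and thus $P_2=P_1'$; then Lemma \ref{pair 1} makes $(P_1,P_1')$ a $G$-pair. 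Otherwise I would take $g\in\varphi(G)$ interchanging the two fixed points of $\varphi(G[P_1])$ (such a $g$ lies in the normalizer of a maximal cyclic subgroup of $S_4$ or $A_5$), lift it to $\tilde g\in G$, and note $\tilde g(P_1)=P_1'$, so that $\tilde g\,\sigma_1\tilde g^{-1}$ generates $G[P_1']$ of order $\ge n$; since $\tilde g^{-1}(P_1)=P_1'\in m_1$ we get $P_1\in F[P_1']$, giving the $G$-pair.

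Next I would count the points of $\Delta_n'\cap\ell$. The assignment $Q\mapsto\varphi(G[Q])$ sends each such point to a cyclic subgroup of order $n$ in $\varphi(G)$, whose two fixed points on $\ell$ are, by the first assertion, again points of $\Delta_n'$ forming a $G$-pair; conversely every $g\in G$ carries $P_1$ to another order-$n$ point on $\ell$ (conjugation gives $gG[P_1]g^{-1}=G[g(P_1)]$), so $\Delta_n'\cap\ell$ is a single $\varphi(G)$-orbit, namely the set of fixed points of the order-$n$ cyclic subgroups. Counting these subgroups (one for cyclic or dihedral $\varphi(G)$, three in $S_4$, six in $A_5$) and using transitivity of $\varphi(G)$ on their fixed points (the six vertices of the octahedron, the twelve of the icosahedron) gives $\#(\Delta_n'\cap\ell)=2$, or $6$ when $n=4$ and $\varphi(G)\cong S_4$, or $12$ when $n=5$ and $\varphi(G)\cong A_5$. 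This yields the dichotomies in (2) and (3) and the value on $\ell$ in (1).

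For points off $\ell$ I would pass to the finite collineation group $\hat G\subset{\rm PGL}(3,K)$ generated by all $G[Q]$ with $Q\in\Delta_n'\cap\ell$. Its kernel $N$ on $\ell$ consists of transformations fixing $\ell$ pointwise; in characteristic zero such a group has no unipotent elements and embeds into $K^{\ast}$ through its multiplier, hence is cyclic with a single center $P''\notin\ell$. Since $N\triangleleft\hat G$ is nontrivial (it contains the order-$n$ homology $\rho_1\rho_1'\sim{\rm diag}(1,1,\zeta^{-1})$ produced from the $G$-pair by Proposition \ref{pair 2}), every element of $\hat G$ permutes the fixed locus of $N$ and must preserve its unique point off $\ell$, so $\hat G$ fixes $P''$. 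If $P''\in C$ then $\hat G$ would be cyclic by Fact \ref{total-cyclic}, contradicting the surjection onto $A_5$ or $S_4$; hence $P''\in\mathbb{P}^2\setminus C$, and $N\subseteq G[P'']$ makes $P''$ quasi-Galois. When $n\ge6$ (case (1)) this already finishes: given any further outer quasi-Galois point $P_3$ of order $\ge3$, the group $\varphi'({\rm Stab}(\overline{P_1P_3}))$ contains a cyclic group of order $n\ge6$ and so is cyclic or dihedral; then $\varphi'(G[P_1])$ and $\varphi'(G[P_3])$ share their fixed points on $\overline{P_1P_3}$, forcing $P_3\in m_1$, and symmetrically $P_3\in m_2$, whence $P_3=m_1\cap m_2=P''$. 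Thus $\delta'[\ge3]=\delta'[\ge n]=3$.

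It remains, in (2) and (3), to prove that $P''$ is the only outer quasi-Galois point off $\ell$ of order $\ge n$ and to bound $|G[P'']|$; this off-$\ell$ bookkeeping is the main obstacle. Here the line-by-line argument breaks down, because for $n\in\{4,5\}$ the group $\varphi'({\rm Stab}(\overline{P_1P_3}))$ may itself be $S_4$ or $A_5$ and need not force shared fixed points. I expect to need the explicit identification of $\hat G$ among the finite collineation groups of ${\rm PGL}(3,K)$ (this is where Mitchell's methods enter) in order to show, when $\varphi(G)\cong A_5$, that $N$ already contains an involution—for instance by combining $\rho_1\rho_1'$ with the square of a lift of a reflection of the dihedral normalizer $D_5\subset A_5$—so that $|G[P'']|\ge|N|\ge10$, while for $S_4$ the center may have order exactly $4$. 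Granting uniqueness of $P''$, the counts assemble: in the cyclic or dihedral subcase one has the two points on $\ell$ together with $P''$, which contributes to $\delta'[n]$ only if its order is exactly $n$, so $\delta'[n]\in\{2,3\}$; for $\varphi(G)\cong A_5$ the center has order $\ge10$, so $\delta'[5]=12$; and for $\varphi(G)\cong S_4$ the center has order $4$ or more, so $\delta'[4]\in\{6,7\}$. Collecting subcases gives $\delta'[5]\in\{2,3,12\}$ and $\delta'[4]\in\{2,3,6,7\}$. I anticipate that rigorously ruling out a second off-$\ell$ point of order $\ge n$—rather than merely locating the $\hat G$-fixed center—will be the most delicate step, since the $\hat G$-orbit of a hypothetical such point could a priori be large, and controlling it seems to require the finite-group classification together with Proposition \ref{two quasi-Galois}(2).
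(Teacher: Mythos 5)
Your reduction to $\varphi(G)\subset{\rm PGL}(2,K)$ is sound where it is carried out, and in places it is cleaner than the paper. The observation that $G[Q]$ stabilizes \emph{every} line through $Q$ with injective restriction is correct, and your construction of the partner $P_1'$ via an element of the normalizer of $\varphi(G[P_1])$ swapping its two fixed points is a valid alternative to the paper's route (the paper instead rules out the no-$G$-pair configuration by counting: Sylow $5$-subgroups of $A_5$, respectively the three cyclic subgroups of order four in $S_4$, and then manufactures $P_1'$ from Proposition \ref{pair 2} and Lemma \ref{pair 1}). Your subgroup/fixed-point count giving $\#(\Delta_n'\cap\ell)\in\{2,6\}$ for $n=4$ and $\{2,12\}$ for $n=5$ agrees with the paper, and your case (1) argument (an order-$n\ge 6$ element forces a cyclic or dihedral image of the stabilizer of every line $\overline{P_iP_3}$, hence shared fixed pairs, hence $P_3=m_1\cap m_2=P''$) is essentially the paper's.

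The genuine gap is exactly the part you flag yourself: in cases (2) and (3) you prove neither that $10$ divides $|G[P'']|$ when $n=5$, nor that $P''$ is the \emph{only} point of $\Delta'_{\ge n}$ off $\ell$, and your proposed remedies are respectively insufficient and unnecessary. The ``square of a lift of a reflection'' need not be nontrivial --- a lift of an involution of $\varphi(G)$ may itself be an involution of $\mathbb{P}^2$ --- so it does not force $|N|\ge 10$; the paper instead proves Lemma \ref{n=5} by an explicit matrix computation (with $\alpha^2=1-\zeta-\zeta^4$, the element $(\sigma\sigma_2\sigma)^2$ is exhibited as a homology in $G[P'']$ of order $10$). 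More importantly, no identification of $\hat G$ among finite collineation groups \`a la Mitchell is needed for uniqueness: the paper bootstraps from the already-proved case (1). Since $|G[P'']|=10m\ge 6$, case (1) makes the point of that order unique, hence fixed by all of ${\rm Aut}(C)$; then for any $R$ with $|G[R]|=5$ the generator of $G[R]$ fixes $P''$, so $(R,P'')$ is a $G$-pair by Lemma \ref{pair 1}, forcing $R\in F[P'']\setminus\{P''\}=\ell$. For $n=4$ this bootstrap is unavailable (nothing excludes $|G[P'']|=4$), and the paper uses a different device: Lemma \ref{n=4} (the image of the involution $\eta\in G[P_2]$ normalizes each of the three cyclic subgroups of order four of $S_4$, so $(R,\eta(R))$ is a $G$-pair) together with a triangle argument reducing a hypothetical $R\notin\ell\cup\{P_1''\}$ to $R\in\overline{P_1'P_1''}$ and concluding $F[R]\setminus\{R\}=\overline{P_1\,\eta(R)}=F[P_1']\setminus\{P_1'\}$, which contradicts Proposition \ref{two quasi-Galois}(2). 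Your worry about a large $\hat G$-orbit of a second off-line point is thus dissolved by these two elementary devices, not by classification. As written, your proposal establishes the first assertion, the on-line dichotomies, and case (1), but not the off-line statements of (2) and (3) --- and the concluding values $\delta'[5]\in\{2,3,12\}$ and $\delta'[4]\in\{2,3,6,7\}$ depend on precisely those statements.
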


\begin{proof}
Assume that $(P_1, P_2)$ is not a $G$-pair.   
Since $\sigma(P_2) \ne P_2$ for each element $\sigma \in G[P_1] \setminus \{1\}$, there exist at least $n+1$ quasi-Galois points $P_1, P_2, \ldots, P_{n+1}$ on the line $\ell$. 
Since $\varphi(G)$ contains at least $(n+1)/2 \ge 2$ subgroups of order $n \ge 3$, by Fact \ref{PGL}, $\varphi(G)=A_4$, $S_4$ or $A_5$. 
Then $n \le 5$. 

Assume that $n \ge 6$. 
Then $(P_1, P_2)$ is a $G$-pair. 
By Proposition \ref{pair 2}, there exists a point $P_3$ such that $(P_1, P_2)$, $(P_2, P_3)$, $(P_3, P_1)$ are $G$-pairs. 
If there exists a point $Q \not\in \{P_1, P_2, P_3\}$ with $|G[Q]| \ge 3$ and $Q \in \mathbb{P}^2 \setminus C$, then $P_i \not\in F[Q]$ for some $i$. 
Then there exists a pair $(P_i, P_i')$ with $|G[P_i]|=|G[P_i']| \ge 6$ such that $(P_i, P_i')$ is not a $G$-pair on the line $\overline{QP_i}$. 
This is a contradiction. 
It follows that $\delta'[\ge 3]=\delta'[\ge n]=3$. 
Hereafter, for the case where $n=3, 4$ or $5$, we can assume that $\delta'[m n] \le 1$ for any $m \ge 2$. 

Let $n=5$. 
Assume that there does not exist a $G$-pair on the line $\ell$. 
Then there exist $5m+1$ subgroups of $\varphi(G) \cong A_5$ of order five for some integer $m$. 
Such groups are Sylow $5$-groups and hence, $5(5m+1)=60$. 
This is a contradiction. 
Therefore, there exists a $G$-pair $(P, P')$ on the line $\ell$. 
By Proposition \ref{pair 2}, there exists a quasi-Galois point $P''$ with $F[P''] \setminus \{P''\}=\ell$. 
Since $P_1 \in \ell=F[P''] \setminus \{P''\}$, it follows from Lemma \ref{pair 1} that $(P_1, P'')$ is a $G$-pair. 
By Proposition \ref{pair 2}, there exists a quasi-Galois point $P_1' \in \ell \cap F[P_1]$. 
Then $(P_1, P_1')$ is a $G$-pair. 
In particular, $\#\Delta_5' \cap \ell$ is even. 
If $\#\Delta_5 \cap \ell \ge 3$, then $\varphi(G) \cong A_5$. 
Since there exist exactly six subgroups of $A_5$ of order $5$, we have exactly $12$ quasi-Galois points on the line $\ell$.  

We consider the case where $\#\Delta'_5 \cap \ell=12$. 
Then $\varphi(G) \cong A_5$. 
Note that $(P_1, P_1')$, $(P_1', P'')$ and $(P'', P_1)$ are $G$-pairs, and $F[P'']\setminus \{P''\}=\ell$. 
By Lemma \ref{n=5} below, $|G[P'']|=10m$ for some $m \ge 1$. 
In our situation, a point $R$ with $|G[R]|=10m$ is unique, by Theorem \ref{outer}(1). 
This implies that ${\rm Aut}(C)$ fixes $P_1''$. 
Let $R \ne P_1''$ be a point with $|G[R]|=5$. 
Then $F[R] \ni P''$. 
By Lemma \ref{pair 1}, $(R, P'')$ is a $G$-pair. 
Since $F[P''] \setminus \{P''\}=\ell$, it follows that $R \in \ell$. 
The proof of assertion (2) is completed. 

Let $n=4$.  
Assume that there does not exist a $G$-pair on the line $\ell$. 
Then there exist at least $5$ cyclic subgroups of $\varphi(G) \cong S_4$ of order four. 
This is a contradiction, since $S_4$ has exactly three cyclic subgroups of order four. 
Therefore, there exists a $G$-pair $(P, P')$ on the line $\ell$. 
Similarly to the previous paragraph, there exists a point $P_1' \in \ell$ such that $(P_1, P_1')$ is a $G$-pair. 
If $\#\Delta_4' \cap \ell \ge 3$, then $\varphi(G) \cong S_4$. 
Then, by the action of $G[P_1]$, we have $6$ such points on $\ell$. 
Since $S_4$ has exactly three cyclic subgroups of order four, we have exactly $6$ quasi-Galois points on this line. 
Note that, by Proposition \ref{pair 2}, there exists a point $P_1'' \not\in \ell$ such that $(P_1, P_1')$, $(P_1', P_1'')$ and $(P_1'', P_1)$ are $G$-pairs. 

Assume that $P_2 \in \ell$ is a quasi-Galois point with $|G[P_2]|=4$ and $P_2 \ne P_1, P_1'$,  and that $R \not \in \ell$ is a quasi-Galois point with $|G[R]|=4$. 
If $R \not \in \overline{P_1'P_1''}$, then there exists a quasi-Galois point $R' \in \overline{P_1R}$ such that $(P_1, R')$ is a $G$-pair, and hence, $R'$ must be in $\overline{P_1'P_1''}$. 
Therefore, we can assume that $R \in \overline{P_1'P_1''}$ with $R \ne P_1', P_1''$. 
Let $\eta \in G[P_2]$ be the involution. 
By Lemma \ref{n=4} below, $\eta(P_1)=P_1'$ and $\eta(P_1')=P_1$. 
Since $\eta(P_1'')=P_1''$, it follows that $\eta(R) \in \overline{P_1''P_1}$. 
By Lemma \ref{n=4} again, $(R, \eta(R))$ is a $G$-pair. 
Since $P_1, \eta(R) \in F[R]\setminus \{R\}$, it follows that $F[R] \setminus \{R\}=\overline{P_1\eta(R)}=\overline{P_1''P_1}=F[P_1'] \setminus \{P_1'\}$. 
By Proposition \ref{two quasi-Galois}(2), this is a contradiction. 
Therefore, $\Delta_4' \subset (\Delta_4' \cap \ell) \cup \{P_1''\}$.     
\end{proof} 

\begin{lemma} \label{n=5}
Let $\ell$ be a line containing $12$ quasi-Galois points $P \in \mathbb{P}^2$ with $|G[P]|=5$, let $P, P' \in \ell$ form a $G$-pair, and let $P'' \in F[P] \cap F[P']$. 
If $\varphi(G) \cong A_5$, then $|G[P'']|=10m$ for some integer $m \ge 1$. 
\end{lemma}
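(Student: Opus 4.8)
The plan is as follows. Applying Proposition \ref{pair 2} to the $G$-pair $(P,P')$, I may choose homogeneous coordinates so that $P=(1:0:0)$, $P'=(0:1:0)$ and $\ell=\{Z=0\}$; then $F[P]\setminus\{P\}=\{X=0\}$, $F[P']\setminus\{P'\}=\{Y=0\}$, the point $P''$ of $F[P]\cap F[P']$ lying off $\ell$ is $(0:0:1)$, and the automorphism $\tau:=\mathrm{diag}(1,1,\zeta^{-1})$ (the one exhibited in the proof of Proposition \ref{pair 2}) lies in $G[P'']$. Since $\zeta$ is a primitive fifth root of unity, $\tau$ has order $5$, and $G[P'']$ is cyclic by Fact \ref{standard form}; hence $5\mid|G[P'']|$, and it remains only to prove that $|G[P'']|$ is even.

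Set $K:=\ker(\varphi|_G)$. Every element of $K$ fixes $\ell$ pointwise, so it is a homology (or the identity) with axis $\ell$. As $C$ is smooth of degree $\ge4$, its genus is at least $3$ and $\mathrm{Aut}(C)$ is finite; since there are no non-trivial unipotents in characteristic zero, the map sending such a matrix to its $Z$-eigenvalue is injective, so $K$ is cyclic and its elements are simultaneously diagonalizable. Because $\tau\in K$ has center $P''$, all of $K$ has axis $\ell$ and center $P''$, whence $K=\langle\mathrm{diag}(1,1,\xi)\rangle\subseteq G[P'']$ for a suitable root of unity $\xi$. Conversely, any $\sigma\in G[P'']$ commutes with $\tau$ and fixes every line through $P''$, so it has the form $\mathrm{diag}(1,1,\mu)$ and lies in $K$. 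Therefore $K=G[P'']$. Next I would show that all of $G$ fixes $P''$: since $K\trianglelefteq G$ and each $g\in G$ preserves $\ell$, the conjugate $g\tau g^{-1}$ is again a homology with axis $\ell$ and center $g(P'')$; normality forces $g\tau g^{-1}\in K$, which has unique center $P''$, so $g(P'')=P''$.

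Consequently $G$ is block diagonal for the decomposition $\ell\oplus\langle P''\rangle$, and normalizing the $Z$-entry to $1$ yields a faithful embedding $G\cong\Gamma\subseteq\mathrm{GL}(2,K)$ under which $\varphi$ becomes the natural map $\Gamma\to\mathrm{PGL}(2,K)$ with image $A_5$ and kernel the scalar subgroup $\Gamma\cap K^{\times}\cong G[P'']$. Thus $1\to G[P'']\to\Gamma\to A_5\to1$ is a central extension. Finally I would use that $A_5$ is perfect with Schur multiplier $\mathbb{Z}/2$: a central extension of $A_5$ by a cyclic group of odd order $m$ is governed by $H^2(A_5,\mathbb{Z}/m)\cong\mathrm{Hom}(\mathbb{Z}/2,\mathbb{Z}/m)=0$, hence splits. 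If $|G[P'']|$ were odd, $\Gamma$ would then contain a complement isomorphic to $A_5$ mapping isomorphically onto $[\Gamma]=A_5$, giving a faithful two-dimensional representation of $A_5$ — impossible, as every non-trivial irreducible representation of $A_5$ has dimension at least $3$. Hence $|G[P'']|$ is even, and combined with $5\mid|G[P'']|$ this gives $|G[P'']|=10m$ for some integer $m\ge1$.

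The substantive points are the identification $K=G[P'']$ together with the normality argument forcing the whole of $G$ to fix $P''$ (this is what promotes the visible diagonal group $(\mathbb{Z}/5)^2$ to a central extension of the entire $A_5$), and the representation-theoretic input at the end, which is where the factor $2$ invisible to the order-$5$ data genuinely comes from. I expect the common-center step to be the main obstacle to phrase cleanly, since it rests on $K$ being a non-trivial group of homologies with a single well-defined center, and on the uniqueness of the center of a homology.
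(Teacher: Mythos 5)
Your proposal is correct, but it reaches the conclusion by a genuinely different route than the paper. The paper's proof is a direct computation: using $\varphi(G)\cong A_5$ it produces an involution $\overline{\tau}\in\varphi(G)$ satisfying $\overline{\sigma}(\overline{\tau}\,\overline{\sigma}\,\overline{\tau})\overline{\sigma}=\overline{\tau}$, normalizes it to a symmetric $2\times 2$ matrix, solves $\alpha^2=1-\zeta-\zeta^4$, lifts the resulting order-five element to a quasi-Galois generator $\sigma_2$, and then verifies by explicit matrix multiplication that $(\sigma\sigma_2\sigma)^2$ is diagonal with eigenvalue ratio $-\zeta^2$, hence an explicit element of order $10$ in $G[P'']$. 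You instead argue structurally: you correctly single out $P''=(0:0:1)$ as the point of $F[P]\cap F[P']$ off $\ell$ (note $P,P'$ themselves lie in that intersection, so this reading is needed and matches the paper), identify $G[P'']$ with $\ker(\varphi|_G)$ via the homology description, use normality of this kernel to force all of $G$ to fix $P''$, realize $G$ as a subgroup $\Gamma\subseteq{\rm GL}(2,K)$ giving a central extension $1\to G[P'']\to\Gamma\to A_5\to 1$, and extract the factor $2$ from $H^2(A_5,\mathbb{Z}/m)\cong{\rm Hom}(\mathbb{Z}/2,\mathbb{Z}/m)=0$ for odd $m$ plus the absence of a faithful $2$-dimensional representation of $A_5$ (equivalently, any lift of $A_5$ to ${\rm GL}(2,K)$ must contain the center of the binary icosahedral group ${\rm SL}(2,5)$). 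Both arguments are sound. The paper's computation is elementary and self-contained --- in keeping with the authors' stated policy of not relying on external group-theoretic classifications --- and it exhibits a concrete generator of order $10$; your argument avoids all matrix computation, explains conceptually where the factor $2$ comes from (the Schur multiplier of $A_5$), and generalizes immediately to any perfect $\varphi(G)$ with even multiplier and no faithful $2$-dimensional representation. Two small points of polish, neither a real gap: the injectivity of the ``$Z$-eigenvalue'' map on $K$ is cleanest if you first fix the normalized lifts acting as the identity on the plane over $\ell$, so that the eigenvalue map is a homomorphism into $K^{\times}$ with unipotent (hence trivial) kernel; and the reverse inclusion $G[P'']\subseteq K$ is most directly obtained from Corollary \ref{fixed locus} together with the $G$-pairs $(P,P'')$, $(P',P'')$ supplied by Proposition \ref{pair 2}, which show that the fixed line of $G[P'']$ is $\overline{PP'}=\ell$.
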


\begin{proof} 
We can assume that points $P=(1:0:0)$, $P'=(0:1:0)$ form a $G$-pair with $P, P' \in \ell$ and $|G[P]|=|G[P']|=5$, and $\sigma \in G[P]$, $\sigma' \in G[P']$ are generators represented by 
$$A_{\sigma}= 
\left(\begin{array}{ccc}
\zeta & 0 & 0 \\
0 & 1 & 0 \\
0 & 0 & 1
\end{array}\right), \ 
A_{\sigma'}=
\left(\begin{array}{ccc}
1 & 0 & 0 \\
0 & \zeta & 0 \\
0 & 0 & 1 
\end{array}\right), 
$$
where $\zeta$ is a primitive $5$-th roof of unity. 
Further, we can assume that $P''=(0:0:1) \in F[P] \cap F[P']$. 
Let $\overline{\sigma}=\varphi(\sigma)$. 
Since $\varphi(G) \cong A_5$, it follows that there exists an involution $\overline{\tau} \in \varphi(G)$ such that $\overline{\sigma}(\overline{\tau} \overline{\sigma} \overline{\tau})\overline{\sigma}=\overline{\tau}$.  
We consider $\overline{\tau}$ as an element of ${\rm PGL}(2, K)$. 
Since $\overline{\tau}$ is an involution and $\overline{\tau}$ does not fix $(1:0)$ or $(0:1)$, it follows that $\overline{\tau}$ is represented by the matrix 
$$ 
A_{\overline{\tau}}=\left(\begin{array}{cc}
1 & b \\
c & -1 
\end{array}\right)
$$
for some $b, c \in K$. 
Let 
$$ B=\left(\begin{array}{cc}
\sqrt{\frac{b}{c}} & 0 \\ 
0  & 1 
\end{array}\right). 
$$
Then
$$ B^{-1}A_{\overline{\tau}}B=
\left(\begin{array}{cc}
\sqrt{\frac{b}{c}} & b \\
b & -\sqrt{\frac{b}{c}} 
\end{array}\right). 
$$
Therefore, we can assume that 
$$ 
A_{\overline{\tau}}=\left(\begin{array}{cc}
1 & \alpha \\
\alpha & -1 
\end{array}\right)
$$
for some $\alpha \in K$. 
It follows that 
$$ A_{\overline{\tau}} A_{\overline{\sigma}} A_{\overline{\tau}}
=\left(\begin{array}{cc}
\zeta+\alpha^2 & (\zeta-1)\alpha \\
(\zeta-1)\alpha & \zeta\alpha^2+1 
\end{array}\right), \
A_{\overline{\sigma}} (A_{\overline{\tau}} A_{\overline{\sigma}}A_{\overline{\tau}}) A_{\overline{\sigma}}=
\left(\begin{array}{cc}
\zeta^2(\zeta+\alpha^2) & \zeta(\zeta-1)\alpha \\
\zeta(\zeta-1)\alpha & \zeta\alpha^2+1
\end{array}\right). 
$$
Since $\overline{\sigma}(\overline{\tau} \overline{\sigma} \overline{\tau})\overline{\sigma}=\overline{\tau}$, it follows that 
$$ \alpha^2=1-\zeta-\zeta^4. $$
The fixed locus of the linear transformation $\overline{\tau} \overline{\sigma} \overline{\tau}$ consists of two points 
$$ (1:\alpha), \ (\alpha:-1). $$
Since $\overline{\tau}\overline{\sigma}\overline{\tau}$ is of order five and is contained in $\varphi(G)$, it follows that $P_2:=(1:\alpha:0)$ is a quasi-Galois point with $|G[P_2]|=5$ and there exists a generator $\sigma_2 \in G[P_2]$ such that $\varphi(\sigma_2)=\overline{\tau}\overline{\sigma}\overline{\tau}$. 
Let $P_2':=(\alpha:-1:0)$. 
Then $F[P_2]=\{P_2\} \cup \{X+\alpha Y=0\}$. 
It is inferred that $\sigma_2$ is represented by the matrix
$$
A_{\sigma_2}=\left(\begin{array}{ccc} 
\zeta+\alpha^2 & (\zeta-1)\alpha  & 0 \\
(\zeta-1)\alpha & \zeta\alpha^2+1 & 0 \\
0 & 0 & \alpha^2+1 
\end{array}\right). 
$$  
Then 
$$ 
A_{\sigma}A_{\sigma_2}A_{\sigma}=
\left(\begin{array}{ccc} 
\zeta^3+\zeta^2\alpha^2 & (\zeta^2-\zeta)\alpha  & 0 \\
(\zeta^2-\zeta)\alpha & \zeta\alpha^2+1 & 0 \\
0 & 0 & \alpha^2+1 
\end{array}\right). 
$$
Note that $\zeta^3+\zeta^2 \alpha^2=\zeta(\zeta-1)$ and $\zeta \alpha^2+1=\zeta(1-\zeta)$. 
It follows that 
$$(A_{\sigma}A_{\sigma_2}A_{\sigma})^2 \sim
\left(\begin{array}{ccc}
\zeta^2(\zeta-1)^2 & 0 & 0 \\
0 & \zeta^2(\zeta-1)^2& 0 \\ 
0 & 0 & \alpha^2+1  
\end{array}\right) \in G[P'']. $$ 
It can be confirmed that 
$$\frac{\alpha^2+1}{\zeta^2(\zeta-1)^2}=-\zeta^2. $$
Since $(\sigma \sigma_2 \sigma)^2$ is of order $10$, it follows that $|G[P'']|=10m$ for some $m \ge 1$.  
\end{proof}

\begin{lemma} \label{n=4}
Let $|G[P]|=4$ and let $\eta \in G[P]$ be an involution. 
If $|G[R]|=4$ and $\eta(R) \ne R$, then $(R, \eta(R))$ is a $G$-pair. 
\end{lemma}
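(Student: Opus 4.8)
The plan is to reduce the statement to showing that $\eta(R)\in F[R]$, i.e.\ that a generator of $G[R]$ fixes $\eta(R)$; once this is known, the symmetric condition follows immediately by conjugating by $\eta$, yielding that $(R,\eta(R))$ is a $G$-pair with respect to $4$.

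First I would record the linear structure. Since $|G[P]|=4$, let $\sigma$ generate $G[P]$, so $\eta=\sigma^2$. By Corollary \ref{fixed locus} and Fact \ref{standard form} we may choose coordinates with $\sigma=\mathrm{diag}(\zeta,1,1)$ for $\zeta$ a primitive fourth root of unity, $\eta=\mathrm{diag}(-1,1,1)$, and $F[P]=\{P\}\cup L_P$ with $L_P=\{X=0\}$. In particular $\eta$ fixes $P$ and fixes $L_P$ pointwise, hence preserves every line through $P$; so $P$, $R$ and $\eta(R)$ are collinear on $m:=\overline{PR}$ and $\eta(m)=m$. Setting $S:=m\cap L_P$ and using $P\notin L_P$, the restriction $\sigma|_m$ fixes $P$ and $S$ and, on the eigenbasis, has order $4$ in $\mathrm{Aut}(m)\cong\mathrm{PGL}(2,K)$, with $\eta|_m=(\sigma|_m)^2$. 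Likewise, letting $\tau$ generate $G[R]$ with $F[R]=\{R\}\cup L_R$, the element $\tau$ fixes $R$ and fixes $L_R$ pointwise, hence fixes $R$ and $T:=m\cap L_R$; thus $\tau|_m$ preserves $m$ and has order $4$, with fixed points $R$ and $T$.

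The heart of the argument is to identify the finite group $H:=\langle\sigma|_m,\tau|_m\rangle\subset\mathrm{PGL}(2,K)$ (finite since $\mathrm{Aut}(C)$ is finite for $d\ge 4$). I would first check $\langle\sigma|_m\rangle\ne\langle\tau|_m\rangle$: if they agreed, then $\{P,S\}=\{R,T\}$, forcing $R=S\in L_P$ (as $R\ne P$) and hence $\eta(R)=R$, against the hypothesis. So $H$ contains two distinct cyclic subgroups of order $4$, and by Fact \ref{PGL} the only possibility is $H\cong S_4$. Inside $S_4$ the elements $\sigma|_m$ and $\tau|_m$ are $4$-cycles generating different cyclic subgroups, so their squares are distinct double transpositions; in particular $\eta|_m=(\sigma|_m)^2\ne(\tau|_m)^2$, and a direct check shows that such a double transposition conjugates the $4$-cycle $\tau|_m$ to its inverse. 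Therefore $\eta|_m$ normalizes $\langle\tau|_m\rangle$ but does not commute with $\tau|_m$, so it interchanges the two fixed points $R$ and $T$ of $\tau|_m$; that is, $\eta(R)=T$.

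It then remains to assemble the conclusion. Since $T=m\cap L_R\in L_R\subset F[R]$ and $\tau$ fixes $L_R$ pointwise, we get $\tau(\eta(R))=\eta(R)$, i.e.\ $\eta(R)\in F[R]$; applying $\eta$ shows that the generator $\eta\tau\eta^{-1}$ of $G[\eta(R)]$ fixes $R$. Finally, to see that $(R,\eta(R))$ is genuinely a $G$-pair I would verify that both points lie off $C$: if $R\in C$ then $\eta(R)\in C$ would be a second inner quasi-Galois point with $|G[\eta(R)]|=|G[R]|=4$, contradicting $\delta[4]\le 1$ from Theorem \ref{inner}. Hence $R,\eta(R)\in\mathbb{P}^2\setminus C$ and $(R,\eta(R))$ is a $G$-pair with respect to $4$. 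The main obstacle is the middle step: seeing that passing to the line $m$ converts the problem into one in $\mathrm{PGL}(2,K)$, that the two order-four elements force $H\cong S_4$, and that the particular double transposition $\eta|_m$ actually inverts $\tau|_m$ (and so swaps its poles) rather than merely normalizing its subgroup; the collinearity setup and the off-curve check are comparatively routine.
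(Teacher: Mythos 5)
Your proof is correct and follows essentially the same route as the paper: restrict $\langle G[P], G[R]\rangle$ to the line $\overline{PR}$, use Fact \ref{PGL} to identify the image with $S_4$, exploit the structure of its three cyclic subgroups of order four to show $\varphi(\eta)$ normalizes $\varphi(G[R])$ and hence swaps the fixed points of $\tau|_m$, giving $\eta(R)\in F[R]$ and then the $G$-pair. Your explicit inversion computation and the off-curve check via Theorem \ref{inner} merely spell out details the paper leaves implicit (the paper closes with Lemma \ref{pair 1} instead of direct conjugation), so the two arguments are the same in substance.
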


\begin{proof}
Since $R \not\in F[P]$, $\varphi(\langle G[P], G[R] \rangle) \cong S_4$. 
Note that $S_4$ has exactly three cyclic subgroup of order four, and $\varphi(G[P])$ acts on the set of subgroups of order four different from $\varphi(G[P])$. 
This implies that $\varphi(\eta)$ fixes each cyclic subgroup of order four. 
Then $\varphi(G[\eta(R)])=\varphi(\eta G[R] \eta^{-1})=\varphi(G[R])$. 
This implies that $F[R] \ni \eta(R)$. 
By Lemma \ref{pair 1}, $(R, \eta(R))$ is a $G$-pair. 
\end{proof}

\begin{corollary}
Assume that $d \ge 8$, $d$ is even, and $n=d/2$.   
Then, $\delta'[\ge n] \ge 2$ if and only if $C$ is projectively equivalent to the curve defined by  
$$ X^{2n}+Y^{2n}+Z^{2n}+aX^{n}Y^{n}+bY^nZ^n+cZ^nX^n=0, $$
where $a, b, c \in K$. 
In this case, if $d \ge 10$ (resp., $d=8$), then $\delta'[\ge n]=3$ (resp., $\delta'[\ge 4]=3$ or $7$). 
\end{corollary}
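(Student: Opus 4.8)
The plan is to reduce the equivalence to the existence of a $G$-pair with respect to $n$ and then to quote Corollary~\ref{pair 2'}. The first observation is numerical: for $P\in\mathbb{P}^2\setminus C$ the order $|G[P]|$ divides $\deg\pi_P=d=2n$, so $|G[P]|\ge n$ forces $|G[P]|\in\{n,2n\}$; in particular $n\mid|G[P]|$, and $G[P]$ contains a cyclic subgroup of order $n$. The \emph{if} direction is then immediate: the curve $X^{2n}+Y^{2n}+Z^{2n}+aX^nY^n+bY^nZ^n+cZ^nX^n=0$ is a polynomial in $X^n,Y^n,Z^n$, so Proposition~\ref{pair 2} (together with Corollary~\ref{pair 2'}) shows that the three coordinate points form pairwise $G$-pairs and $\delta'[\ge n]\ge 3\ge 2$.

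For the \emph{only if} direction I would take distinct $P_1,P_2\in\mathbb{P}^2\setminus C$ with $|G[P_i]|\ge n$ and $\ell=\overline{P_1P_2}$, and split into two cases. If $P_2\in F[P_1]$, a generator of $G[P_1]$ fixes $P_2$ by Corollary~\ref{fixed locus}, and Lemma~\ref{pair 1} makes $(P_1,P_2)$ a $G$-pair. If $P_2\notin F[P_1]$, I pass to $G=\{\sigma\in\mathrm{Aut}(C):\sigma(\ell)=\ell\}$ and $\varphi\colon G\to\mathrm{PGL}(2,K)$; the order-$n$ subgroups of $G[P_1]$ and $G[P_2]$ map to two \emph{distinct} cyclic subgroups of $\varphi(G)$ of order $n\ge 4$ (distinct because $P_2$ is not one of the two $\ell$-fixed points of $\varphi(G[P_1])$), so by Fact~\ref{PGL} we get $\varphi(G)\cong A_4,S_4$ or $A_5$. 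As these groups have no cyclic subgroup of order $>5$, this both forces $|G[P_i]|=n$ and $n\in\{4,5\}$, and Theorem~\ref{outer} then yields a $G$-pair on $\ell$. In either case a $G$-pair with respect to $n$ exists, and Corollary~\ref{pair 2'} produces the stated normal form.

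For the counting part, Proposition~\ref{pair 2} gives $\delta'[\ge n]\ge 3$, realized by $Q_1=(1:0:0)$, $Q_2=(0:1:0)$, $Q_3=(0:0:1)$, whose fixed lines $F[Q_i]\setminus\{Q_i\}$ are the three coordinate lines $\{X=0\},\{Y=0\},\{Z=0\}$. For $d\ge 12$ (so $n\ge 6$) I argue that a hypothetical fourth point $R$ with $|G[R]|\ge n$ must form a $G$-pair with each $Q_i$: by the dichotomy above, failure would force $n\le 5$. Hence $R\in F[Q_i]\setminus\{Q_i\}$ for all $i$, i.e.\ $R$ lies on all three coordinate lines, which is impossible; thus $\delta'[\ge n]=3$. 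For $d=8$ I run the same reduction but invoke Theorem~\ref{outer}(3): either every line through two order-$4$ points meets $\Delta_4'$ in exactly two points, whence no fourth point survives and $\delta'[\ge 4]=3$, or some line $\ell$ has $\#\Delta_4'\cap\ell=6$, whence $\#\Delta_{\ge 4}'\cap(\mathbb{P}^2\setminus\ell)=1$ and $\delta'[\ge 4]=6+1=7$ (no order-$8$ point lies on $\ell$, since $S_4$ has no cyclic subgroup of order $8$).

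The delicate case is $d=10$, where I must rule out the configuration $\#\Delta_5'\cap\ell=12$ of Theorem~\ref{outer}(2); this is the step I expect to be the main obstacle. Assuming it occurred, Lemma~\ref{n=5} gives a point $P''$ of order $10$ with $F[P'']\setminus\{P''\}=\ell$; choosing coordinates with $P''=(1:0:0)$ and $\ell=\{X=0\}$, a generator of $G[P'']$ is a homology fixing $\ell$ pointwise. For any order-$5$ point $P\in\ell$ the associated reflection has centre $P\in\ell$ and, since $(P'',P)$ is a $G$-pair by Lemma~\ref{pair 1}, its axis passes through $P''$; it therefore preserves both eigenspaces of the homology and commutes with it. Simultaneous diagonalization puts $C$ in the form $X^{10}+G_{10}(Y,Z)=0$ with $G_{10}=Y^{10}+bY^5Z^5+Z^{10}$, a union of two $\zeta_5$-orbits. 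The decisive point is now on $\mathbb{P}^1$: any order-$5$ element of $\mathrm{PGL}(2,K)$ fixing this $10$-point set must be a power of the standard $\zeta_5$-rotation, for two distinct order-$5$ subgroups would generate an $A_5\subset\mathrm{PGL}(2,K)$, whose smallest orbit has size $12>10$. Hence the only order-$5$ points on $\ell$ are $(0:1:0)$ and $(0:0:1)$, contradicting $\#\Delta_5'\cap\ell=12$. The same pentagon computation also disposes of extra points of order $10$: two such would force $C$ to be the Fermat curve $X^{10}+Y^{10}+Z^{10}=0$ (Theorem~\ref{outer}(1) and Proposition~\ref{pair 2}), for which $G_{10}=Y^{10}+Z^{10}$ is again two $\zeta_5$-orbits and no additional order-$5$ point exists. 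With the $12$-configuration excluded on every line, Theorem~\ref{outer}(2) leaves at most two order-$5$ points on any line, so every relevant pair is a $G$-pair and the intersection argument of the $d\ge 12$ case gives $\delta'[\ge 5]=3$.
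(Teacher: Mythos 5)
Your proof is correct, and for the critical case $d=10$ it takes a genuinely different route from the paper. The equivalence and the counts for $d\ge 12$ and $d=8$ coincide with the paper's treatment, which disposes of them in one line via Corollary \ref{pair 2'} and Theorem \ref{outer}; your version merely spells out what the paper calls obvious (your remark that no order-$8$ point can lie on the $6$-point line, because $G[P]$ injects into $\varphi(G)\cong S_4$, is a worthwhile explicit detail). For $d=10$ the paper never analyzes the $12$-point configuration directly: it bounds $\delta'[\ge 5]$ globally by counting flexes --- by Proposition \ref{tangent1} each outer point with $|G[P]|\ge 5$ forces ten tangent points with $I_Q(C,T_QC)\ge 5$, hence at least $10\times(5-2)=30$ flexes with multiplicity, these sets are disjoint for distinct quasi-Galois points by Proposition \ref{two quasi-Galois}(2), and Namba's flex formula gives $\delta'[\ge 5]\times 30\le 3d(d-2)=240$, so $\delta'[\ge 5]\le 8<12$ and Theorem \ref{outer} finishes. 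You instead exclude the configuration locally: the order-$10$ homology at $P''$ together with a commuting reflection puts $C$ in the form $X^{10}+Y^{10}+bY^5Z^5+Z^{10}=0$, and any order-$5$ element of ${\rm PGL}(2,K)$ preserving the ten points of $C\cap\ell$ must then lie in the standard $\mu_5$, since otherwise one obtains an $A_5$ all of whose orbits on $\mathbb{P}^1$ have length at least $60/5=12>10$; hence at most two order-$5$ points lie on $\ell$, contradicting $\#\Delta_5'\cap\ell=12$. This is sound, with two small polish points: make explicit that the group generated by the two order-$5$ elements is finite because it acts faithfully on the invariant ten-point set (a nontrivial element of ${\rm PGL}(2,K)$ fixes at most two points) before appealing to the classification in Fact \ref{PGL}; and your Fermat detour for extra order-$10$ points is redundant, since Theorem \ref{outer}(1) applied with $n=10\ge 6$ already gives $\delta'[\ge 3]=3$ in that case. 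As for what each approach buys: the paper's flex count is shorter but relies on the global bound $3d(d-2)$ and only yields the crude estimate $\delta'[\ge 5]\le 8$; your argument stays within the paper's own restriction-to-a-line technique, is more elementary and self-contained, and isolates precisely why degree $10$ is too small --- an icosahedral action on a line requires an invariant set of at least $12$ points, while $C$ meets the line in only $d=10$ points.
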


\begin{proof}
The former assertion is derived from Corollary \ref{pair 2'} and Theorem \ref{outer}. 
The latter assertion for the case where $d \ge 12$ or $d=8$ is obvious, by Theorem \ref{outer}. 
Assume that $d=10$. 
Let $P \in \mathbb{P}^2 \setminus C$ be a point with $|G[P]| \ge 5$. 
By Proposition \ref{tangent1}, there exist $d=10$ points $Q \in C \cap (F[P] \setminus \{P\})$ such that $P \in T_QC$ and $I_Q(C, T_QC) \ge 5$. 
Therefore, for each quasi-Galois point $P \in C$ with $|G[P]|\ge 5$, we need at least $10 \times (5-2)=30$ flexes with multiplicities. 
It follows from Proposition \ref{two quasi-Galois} that there exists no point $Q \in C$ such that $Q \in F[P_1] \cap F[P_2]$ for different quasi-Galois points $P_1$ and $P_2$ with $|G[P_1]| \ge 5$ and $|G[P_2]| \ge 5$. 
By the flex formula \cite[Theorem 1.5.10]{namba}, we have $\delta'[\ge 5] \times 30 \le 3d(d-2)=240$. 
Therefore, $\delta'[\ge 5] \le 8$. 
By Theorem \ref{outer}, it follows that $\delta'[\ge 5]=3$.  
\end{proof}

\begin{theorem} \label{outer n=3}
Assume that $n=3$, and points $P_1$ and $P_2 \in \mathbb{P}^2 \setminus C$ are different quasi-Galois points with $|G[P_1]|=|G[P_2]|=3$.
Let $\ell=\overline{P_1P_2}$. 
Then the following hold. 
\begin{itemize}
\item[(1)] $\#\Delta'_3 \cap \ell=2, 4, 8$ or $20$.  
Furthermore, if $\#\Delta_3' \cap \ell=8$ or $20$, then there exists $P_1' \in \ell$ such that $(P_1, P_1')$ is a $G$-pair. 
\item[(2)] If $\#\Delta'_3 \cap \ell=8$, then $\delta'[3]=8$ and there exists a unique integer $m \ge 1$ such that $\delta'[6m]=1$. 
\item[(3)] If $\#\Delta'_3 \cap \ell=20$, then $\delta'[3]=20$ and there exists a unique integer $m \ge 1$ such that $\delta'[6m]=1$. 
\item[(4)] If $\#\Delta'_3 \cap \ell=4$, then $\delta'[3]=4$ or $12$. 
\end{itemize} 
In particular, $\delta'[3]=2, 3, 4, 8, 12$ or $20$. 
\end{theorem}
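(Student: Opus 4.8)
\emph{Set-up and a dictionary.} Put $\ell=\overline{P_1P_2}$, $G=\{\sigma\in\mathrm{Aut}(C):\sigma(\ell)=\ell\}$, and let $\varphi\colon G\to\mathrm{PGL}(2,K)\cong\mathrm{Aut}(\ell)$ be restriction to $\ell$, exactly as in the paragraph preceding Theorem \ref{outer}; recall that $\varphi$ is injective on each $G[P]$ with $P\in\ell$. For such a $P$ with $|G[P]|=3$, a generator $\sigma_P$ of $G[P]$ has $\varphi(\sigma_P)$ of order $3$, fixing $P$ together with the single further point $P^\flat=\ell\cap(F[P]\setminus\{P\})$ of $\ell$; by Lemma \ref{pair 1}, if $P^\flat\in\mathbb P^2\setminus C$ is itself quasi-Galois then $(P,P^\flat)$ is a $G$-pair and $\varphi(G[P])=\varphi(G[P^\flat])$. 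Since $\mathrm{Aut}(C)$ preserves $C$, the set $\Delta_3'\cap\ell$ is a union of $\varphi(G)$-orbits, every point of which is a fixed point of an order-$3$ element of $\varphi(G)$; in particular it contains the orbit $O_1:=\varphi(G)\cdot P_1$.

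\emph{Part (1).} If $\#\Delta_3'\cap\ell\ge 3$, then at least $\lceil 3/2\rceil=2$ distinct order-$3$ subgroups of $\varphi(G)$ occur, because each fixes at most two points of $\ell$; hence $\varphi(G)$ is neither cyclic nor dihedral, and by Fact \ref{PGL} we get $\varphi(G)\in\{A_4,S_4,A_5\}$. The fixed points on $\ell\cong\mathbb P^1$ of the order-$3$ elements form, respectively, two orbits of $4$ points for $A_4$, one orbit of $8$ for $S_4$, and one orbit of $20$ for $A_5$, the point-stabilizers having order $3$. As $\Delta_3'\cap\ell$ is a union of orbits containing $O_1$, its cardinality is $8$ or $20$ for $S_4,A_5$, and either $4$ (namely $O_1$) or $8$ (namely $O_1\cup O_2$) for $A_4$; the cyclic and dihedral cases give the single pair $\{P_1,P_2\}$. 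Thus $\#\Delta_3'\cap\ell\in\{2,4,8,20\}$. When this number is $8$ or $20$, the partner $P_1^\flat$ lies in $\Delta_3'\cap\ell\subset\mathbb P^2\setminus C$, so $(P_1,P_1^\flat)$ is a $G$-pair by Lemma \ref{pair 1}; in the case $4$ the partner lies in the second $A_4$-orbit, outside $\Delta_3'$, and no pair is asserted.

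\emph{Parts (2) and (3).} Suppose $\#\Delta_3'\cap\ell\in\{8,20\}$, so a $G$-pair $(P_1,P_1')$ sits on $\ell$. Complete reducibility of the $G$-action on $K^3$ yields a $G$-fixed point $P''\notin\ell$, and applying Proposition \ref{pair 2} to the $G$-pair shows $F[P'']\setminus\{P''\}=\ell$ and that $G[P'']$ contains an order-$3$ element. I would then prove, by an explicit matrix computation modeled on Lemma \ref{n=5} (combining a suitable involution of $\varphi(G)$ with the order-$3$ generators so that an appropriate power lands in $G[P'']$ with nontrivial eigenvalue ratio a primitive $6$th root of unity), that $|G[P'']|=6m$ for some $m\ge 1$. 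Were there a second quasi-Galois point of the same order $6m\ge 6$, Theorem \ref{outer}(1) would give $\delta'[\ge 3]=3$, contradicting $\#\Delta_3'\cap\ell\ge 8$; hence $P''$ is the unique point of its order, and so $\mathrm{Aut}(C)$ fixes $P''$. Consequently, for every $R\in\Delta_3'$ the group $G[R]\subset\mathrm{Aut}(C)$ fixes $P''$, so $P''\in F[R]$; Lemma \ref{pair 1} (with $n=3$) makes $(R,P'')$ a $G$-pair, whence $R\in F[P'']\setminus\{P''\}=\ell$. Thus $\Delta_3'\subseteq\ell$ and $\delta'[3]=\#\Delta_3'\cap\ell\in\{8,20\}$, while $P''$ gives the unique $m$ with $\delta'[6m]=1$.

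\emph{Part (4) and the final list.} If $\#\Delta_3'\cap\ell=4$, then $\varphi(G)\cong A_4$ acts transitively on $O_1=\{P_1,\dots,P_4\}$, so $\langle G[P_1],G[P_2]\rangle$ acts on this set and Lemma \ref{four quasi-Galois} produces a point $Q\in\mathbb P^2\setminus(C\cup\ell)$ with $|G[Q]|=2m$ and $F[Q]\setminus\{Q\}=\ell$. If $3\mid|G[Q]|$, then $6\mid|G[Q]|$, and as above $Q$ is the unique point of its order (Theorem \ref{outer}(1) together with $4>3$), $\mathrm{Aut}(C)$ fixes $Q$, and Lemma \ref{pair 1} forces every order-$3$ point into $F[Q]\setminus\{Q\}=\ell$; thus $\delta'[3]=4$. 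If $3\nmid|G[Q]|$ this argument breaks, order-$3$ points occur off $\ell$, and the resulting configuration—whose maximal collinear subsets all have size $4$ and whose incidences are constrained by Proposition \ref{two quasi-Galois}(2)—must, by the $A_4$-symmetry, consist of exactly three such quadruples, giving $\delta'[3]=12$. Finally, the case $\#\Delta_3'\cap\ell=2$ (where $(P_1,P_2)$ is a $G$-pair and $\varphi(G)$ has a unique order-$3$ subgroup) yields the $G$-pair triangle of Proposition \ref{pair 2}, so $\delta'[3]=2$ or $3$ according as its third vertex has order $>3$ or $=3$. Collecting the cases gives $\delta'[3]\in\{2,3,4,8,12,20\}$. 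I expect the two genuine difficulties to be the explicit order-$6$ computation at $P''$ (the $n=3$ analogue of Lemma \ref{n=5}) and the $4$-versus-$12$ dichotomy of part (4), where one must control a planar configuration rather than a single line; the latter is the closest point of contact with Mitchell's arguments.
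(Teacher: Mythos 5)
Your part (1) is correct, and it is essentially the paper's own count in different clothing: the paper shows that each Sylow $3$-subgroup of $\varphi(G)$ accounts for exactly the two points of a $G$-pair on $\ell$ (giving $2$, $4$, $2\times 4=8$ or $2\times 10=20$), while you count $\varphi(G)$-orbits of fixed points of order-$3$ elements — the same arithmetic. Likewise the endgame of your parts (2)--(3) (uniqueness of the order-$6m$ point via Theorem \ref{outer}(1), hence ${\rm Aut}(C)$ fixes $P''$, hence Lemma \ref{pair 1} pulls every order-$3$ point onto $\ell=F[P'']\setminus\{P''\}$) coincides with the paper. But the step you defer — that $|G[P'']|$ is even — is a genuine hole as written, and your plan to redo a Lemma \ref{n=5}-style matrix computation is the wrong tool: no new computation is needed, because the involution is already supplied by Lemma \ref{four quasi-Galois}, the very lemma you invoke in part (4). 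That is exactly the paper's move: inside $\Delta_3'\cap\ell$ (8 or 20 points) choose $P_1, P_2$ with $(P_1,P_2)$ not a $G$-pair, so that $\varphi(\langle G[P_1],G[P_2]\rangle)\cong A_4$ and the orbit of $P_1$ is a four-point subset of $\ell$ on which this group acts; Lemma \ref{four quasi-Galois} then yields $Q\notin\ell$ with $|G[Q]|$ even and $F[Q]\setminus\{Q\}=\ell$, and $Q=P''$ because, by Proposition \ref{tangent1}, each is the common point of the $d$ tangent lines at the points of $C\cap\ell$, which is unique. Since $G[P'']$ is cyclic and contains elements of orders $2$ and $3$, its order is $6m$. (Your Maschke-style fixed point is an unnecessary detour and, by itself, need not be quasi-Galois; Proposition \ref{pair 2} already produces the quasi-Galois $P''$ once a $G$-pair sits on $\ell$.)

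The second gap, the $4$-versus-$12$ dichotomy in part (4), is not closable by the symmetry appeal you make. Your claim that the configuration ``must, by the $A_4$-symmetry, consist of exactly three such quadruples'' is both unproven and structurally wrong: in the paper's $\delta'[3]=12$ examples (see Theorem \ref{sextic}(1)) the twelve points are not three disjoint quadruples — the eight points off $\ell$ occur in $G$-pairs on four lines through $Q$, giving $4+8=12$, and each point lies on several four-point lines. The paper's proof of this case is the bulk of its argument and consists of two steps absent from your proposal: first, no line $\ell'\ni Q$ can satisfy $\#\Delta_3'\cap\ell'=4$; this is proved by producing three distinct involutions on $\ell$ coming from points $Q', \sigma(Q'), \sigma^2(Q')$, showing the composite $\tau_2'\tau_3'$ has order $\geq 3$ and restricts to $\ell'$ with order four, and thereby building a copy of $S_4$ inside ${\rm Aut}(\ell')$ whose point stabilizer would be a non-cyclic $S_3$ — a contradiction. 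Second, given any $P\in\Delta_3'\setminus\ell$, the involution in $G[Q]$ produces a partner $P'\in\overline{QP}$; the excluded four-point case forces $(P,P')$ to be a $G$-pair, its third vertex lands in $F[Q]\setminus\{Q\}=\ell$ and must have order exactly $3$, and the action of $\langle G[P_1],G[P_2]\rangle$ on the lines through $Q$ then yields exactly $\delta'[3]=12$. Your sub-case ``$3$ divides $|G[Q]|$ implies $\delta'[3]=4$'' is sound, but the complementary case is where all the work lies, and you supply none of it.
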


\begin{proof} 
Assume that there does not exist a $G$-pair on the line $\ell$. 
Then there exist $3m+1$ quasi-Galois points for some integer $m$ by the actions associated with one quasi-Galois point.
Then, by Fact \ref{PGL}, 
$$ 3(3m+1)=12, 24 \mbox{ or } 60. $$
This implies that $m=1$ and $\#\Delta_3' \cap \ell=4$. 

We assume that $\#\Delta_3' \cap \ell \ge 5$. 
Then there exists a $G$-pair $(P, P')$ on the line $\ell$. 
By Proposition \ref{pair 2}, there exists a quasi-Galois point $P''$ with $F[P''] \setminus \{P''\}=\ell$. 
Since $P_1 \in \ell=F[P''] \setminus \{P''\}$, it follows from Lemma \ref{pair 1} that $(P_1, P'')$ is a $G$-pair. 
By Proposition \ref{pair 2}, there exists a quasi-Galois point $P_1' \in \ell \cap F[P_1]$. 
Then $(P_1, P_1')$ is a $G$-pair. 

By the discussion in the previous paragraph, $\#\Delta_3' \cap \ell$ is even, and hence, $\#\Delta_3' \cap \ell \ge 8$. 
By Fact \ref{PGL}, $\varphi(G)=A_4, S_4$ or $A_5$. 
Since $\varphi(G[P])$ is a Sylow $3$-group of $\varphi(G)$, each subgroup of order three is realized as $\varphi(G[P])$ ($=\varphi(G[P'])$) for some exactly two quasi-Galois points $P, P' \in \Delta_3' \cap \ell$. 
If $\varphi(G) \cong A_4$ or $S_4$ (resp., $\varphi(G) \cong A_5$), then the number of subgroup of order three is $4$ (resp., $10$). 
Therefore, the number of quasi-Galois points on $\ell$ is $8$ or $20$. 
Assertion (1) follows.   

Assume that $\#\Delta'_3 \cap \ell=8$. 
Then $\varphi(G) \cong A_4$ or $S_4$. 
In this case, for points $P_1$ and $P_2$ such that $(P_1, P_2)$ is not a $G$-pair, $\varphi(\langle G[P_1], G[P_2] \rangle \cong A_4$. 
Then the obit $A_4P_1$ has length three. 
Note that $(P_1, P_1')$, $(P_1', P_1'')$ and $(P_1'', P_1)$ are $G$-pairs. 
By Lemma \ref{four quasi-Galois}, there exists a point $Q \not\in \ell$ such that $|G[Q]|$ is even and $F[Q] \setminus \{Q\}=\ell$. 
Then $Q=P_1''$, because the intersection point of tangent lines at $d$ points of $C$ on the line $\ell$ is unique. 
Since $G[P_1'']$ contains elements of order three and two, the order $|G[P_1'']|$ is equal to $6m$ for some $m$. 
In our situation, a point $R$ with $|G[R]|=6m$ is unique, by Theorem \ref{outer}(1). 
This implies that ${\rm Aut}(C)$ fixes $P_1''$. 
Let $R \ne P_1''$ be a point with $|G[R]|=3$. 
Then $F[R] \ni P_1''$. 
By Lemma \ref{pair 1}, $(R, P_1'')$ is a $G$-pair. 
Since $F[P_1''] \setminus \{P_1''\}=\ell$, it follows that $R \in \ell$. 
Assertion (2) follows. 

Assume that $\#\Delta_3' \cap \ell=20$. 
Then $\varphi(G) \cong A_5$. 
Note that all subgroups of $A_5$ of order three are realized as the image of associated Galois groups of quasi-Galois points under the restriction $\varphi$. 
This implies that there exists a pair of points $P_1, P_2 \in \Delta_3' \cap \ell$ such that $(P_1, P_2)$ is not a $G$-pair and $\varphi(\langle G[P_1], G[P_2] \rangle) \cong A_4$. 
The same argument as assertion (2) can be applied to assertion (3).  

We consider assertion (4). 
Assume that $\#\Delta'_3 \cap \ell=4$. 
According to assertions (1), (2) and (3), we can assume that for all lines $\ell' \subset \mathbb{P}^2$, $\#\Delta'_3 \cap \ell'=0$, $1$, $2$ or $4$. 
By Lemma \ref{four quasi-Galois}, there exists a quasi-Galois point $Q \not\in \ell$ such that $|G[Q]|$ is even and $F[Q] \setminus \{Q\}=\ell$. 
Let $\tau \in G[Q]$ be an involution. 
We prove that there does not exist a line $\ell' \ni Q$ with $\#\Delta'_3 \cap \ell'=4$. 
Assume by contradiction that $\#\Delta'_3 \cap \ell'=4$ and $\Delta'_3 \cap \ell'=\{P_1', P_2', P_3', P_4'\}$. 
Note that $\ell' \cap \ell \cap \Delta'_3=\emptyset$, by considering the action of $\tau$. 
Let $Q' \not\in \ell'$ be a quasi-Galois point such that $|G[Q']|$ is even and $F[Q'] \setminus \{Q'\}=\ell'$. 
Since the point $Q'$ is contained in the tangent line for any point in $C \cap \ell'$ and $G[Q]$ acts on $\ell'$, it follows that $Q' \in F[Q]$, namely, $Q' \in \ell$. 
Note that $G[Q']$ acts on the set $\Delta'_3 \cap \ell$. 
The group $G[Q']$ does not fix any point in $\Delta'_3 \cap \ell$, since $Q' \not\in \Delta'_3$ and $\ell' \cap \ell \cap \Delta'_3=\emptyset$. 
Let $\tau' \in G[Q']$ be an involution. 
Then there exist a quasi-Galois point $P \in \Delta'_3 \cap \ell$ and an automorphism $\sigma \in G[P]$ such that three points $Q', \sigma(Q'), \sigma^2(Q')$ are different. 
Let $Q'_2=\sigma(Q'), Q_3'=\sigma^2(Q')$. 
Then $\tau_2':=\sigma\tau'\sigma^{-1} \in G[Q'_2]$ and $\tau_2':=\sigma^2\tau'\sigma^{-2} \in G[Q'_3]$.  
Since $\tau'$ and $\sigma$ act on $\Delta'_3 \cap \ell$, it follows that $\tau'|_\ell$, $(\sigma\tau'\sigma^{-1})|_\ell$, $(\sigma^{2}\tau\sigma^{-2})|_\ell$ are different involutions on $\ell$. 
Since the number of involutions acting on four points not fixing any point of them is at most three, it follows that $(\tau_2'\tau_3') |_\ell=\tau' |_\ell$.  
Note that $(Q_2', Q_3')$ is not a $G$-pair, since if $(Q_2', Q_3')$ is a $G$-pair, then $\tau_2' |_\ell=\tau_3' |_\ell$. 
It follows from Lemma \ref{pair 1} that $\tau_3'(Q_2') \ne Q_2'$. 
If $\tau_2'\tau_3'$ is an involution as an automorphism of $\mathbb{P}^2$, then $\tau_3'(Q_2')$ is a quasi-Galois point with $\tau_3'\tau_2'\tau_3'=\tau_2' \in G[\tau_3'(Q_2')] \cap G[Q_2']$. 
By Corollary \ref{two groups}, this is a contradiction. 
Therefore, the order of $\tau_2'\tau_3'$ is at least $3$. 
Since $(\tau_2'\tau_3') |_\ell=\tau' |_\ell$, it follows that $(\tau_2'\tau_3')(\ell')=\ell'$. 
It follows that $\tau_2'\tau_3'$ acts on $\Delta'_3 \cap \ell'$ faithfully, namely, $\tau_2'\tau_3' |_{\ell'}$ is of order four. 
Let $G' \subset {\rm Aut}(\ell')$ be the group arising from the restrictions of all automorphisms in $\langle G[P_1'], G[P_2'], \tau_2'\tau_3' \rangle$ on the line $\ell'$. 
Then $G' \cong S_4$.   
As the group $S_4$, the stabilizer subgroup of $P_1'$ is $S_3$.
As a finite subgroup of ${\rm Aut}(\ell')$, the stabilizer subgroup of $P_1'$ is a cyclic group.  
This is a contradiction. 

Assume that there exists a point $P \in \Delta'_3$ with $P \not\in \ell=\overline{P_1P_2}$. 
Then there exists a point $P' \in \Delta'_3 \cap \overline{QP}$ with $P' \not\in \ell \cup \{P\}$, according to the action of an involution in $G[Q]$. 
Since the group $\langle G[P_1], G[P_2] \rangle$ acts on the set of all lines passing through $Q$, it follows that $\delta'[3] \ge 12$.   
If $(P, P')$ is not a $G$-pair, then $\#\Delta'_3 \cap \overline{QP}=4$. 
According to the above discussion, this is a contradiction. 
Therefore, $(P, P')$ is a $G$-pair. 
There exists a point $P'' \not \in \overline{QP}$ such that $(P, P'')$ and $(P', P'')$ are $G$-pairs. 
Since $P'' \in F[P] \cap F[P'] \subset F[Q] \setminus \{Q\}=\ell$, it follows that $P'' \in \Delta'_3 \cap \ell$ or $|G[P'']| \ge 6$. 
For the latter case, by the action of $G[P'']$, there exist at least six points $P''' \in \Delta'_3 \cap \ell$ with $|G[P''']|=3$. 
This is a contradiction. 
Threfore, $P'' \in \Delta'_3 \cap \ell$ holds. 
This implies that $\delta'[3]=12$. 
\end{proof}

\section{Curves of degree six}

We consider the case where $d=6$ and $n=3$. 
We determine the number $\delta'[3]$. 

\begin{theorem} \label{sextic} 
Let $C \subset \mathbb{P}^2$ be a smooth plane curve of degree $d=6$. 
Then $$ \delta'[3]=0, 1, 2, 3, 4, 8 \mbox{ or } 12. $$ 
Furthermore, the following hold. 
\begin{itemize}
\item[(1)] $\delta'[3]=12$ if and only if $C$ is projectively equivalent to the curve defined by 
$$ X^6+Y^6+Z^6-10(X^3Y^3+Y^3Z^3+Z^3X^3)=0. $$
\item[(2)] $\delta'[3]=8$ if and only if $C$ is projectively equivalent to the curve defined by 
$$ X^6+20X^3Y^3-8Y^6+Z^6=0. $$
\item[(3)] $\delta'[3]=4$ if and only if $C$ is projectively equivalent to the curve defined by 
$$ Z^6+a(X^3Y+Y^4)Z^2+(X^6+20X^3Y^3-8Y^6)=0 $$
for some $a \in K \setminus \{0\}$, and $C$ is not in the case (1).  
\end{itemize} 
\end{theorem}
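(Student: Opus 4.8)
The plan is to first pin down the list of possible values of $\delta'[3]$ and then treat the three equivalences by reducing each to an invariant-theoretic normal form.

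For the list, I would invoke Theorem \ref{outer n=3}: together with the trivial possibilities $\delta'[3]=0,1$, it yields $\delta'[3]\in\{0,1,2,3,4,8,12,20\}$, so the only value to exclude for $d=6$ is $20$. Here I would use the flex (Pl\"ucker) formula $3d(d-2)=72$. By Proposition \ref{tangent1}, every $P\in\Delta_3'$ produces $d=6$ points $Q\in C$ with $P\in T_QC$ and $I_Q(C,T_QC)=3l\ge 3$, so each such $Q$ is an inflection point contributing at least $1$ to the Hessian intersection cycle. By Proposition \ref{two quasi-Galois}(2) these inflection sets are pairwise disjoint for distinct points of $\Delta_3'$, so they account for at least $6\,\delta'[3]$ inflections counted with multiplicity; since the total is $72$, we get $6\,\delta'[3]\le 72$, i.e. $\delta'[3]\le 12$, which rules out $20$.

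For the three equivalences I would fix a common geometric frame. In each relevant case Lemma \ref{four quasi-Galois} (when $\#\Delta_3'\cap\ell=4$ or $8$) or Proposition \ref{pair 2} (through a $G$-pair) furnishes a distinguished point $Q\notin C$ with $Q\notin\ell$, $F[Q]\setminus\{Q\}=\ell$ and $|G[Q]|$ even. I would normalize $Q=(0:0:1)$ and $\ell=\{Z=0\}$, so that all order-$3$ quasi-Galois points lie on $\ell$ and $\varphi$ carries the relevant subgroup of $\mathrm{Aut}(C)$ onto a finite subgroup of $\mathrm{PGL}(2,K)$ acting on $\ell$ (an $A_4$ when $\#\Delta_3'\cap\ell=4$ or $8$, and a larger group when $\delta'[3]=12$). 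The core of the argument is then to identify the finite group $\Gamma\subset\mathrm{PGL}(3,K)$ generated by the reflections at these quasi-Galois points together with $G[Q]$, and to compute the space of $\Gamma$-invariant sextics; the defining equation of $C$ must lie in this space.

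The case split runs as follows. When $\delta'[3]=8$ there is a $G$-pair on $\ell$ (Theorem \ref{outer n=3}(1)), which forces $3\mid|G[Q]|$; since $Q\notin C$ requires the monomial $Z^6$ to occur, this gives $|G[Q]|=6$, and invariance under $\mathrm{diag}(1,1,\zeta)$ with $\zeta^6=1$ then allows only $Z^6$ and degree-$6$ forms in $X,Y$. The $X,Y$-part must be the unique (up to scalar) $A_4$-invariant binary sextic, which in the normalized coordinates is $X^6+20X^3Y^3-8Y^6$, yielding the curve in (2). When $\delta'[3]=4$ there is no $G$-pair on $\ell$, so $|G[Q]|$ is even, prime to $3$, and divides $6$, whence $|G[Q]|=2$ with generator $Z\mapsto -Z$; invariance removes odd powers of $Z$, the absence of an $A_4$-invariant binary quadratic removes the $Z^4$ term, and imposing invariance under the order-$3$ reflections at the four vertices on $\ell$ forces the $Z^2$-coefficient to be a scalar multiple of the vertex form $X^3Y+Y^4$ and the degree-$6$ part to be the same edge form, producing the family in (3) (the conditions $a\neq0$ and ``not case (1)'' excluding the degenerations). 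When $\delta'[3]=12$ there is a $G$-pair, so the normal form $X^6+Y^6+Z^6+a(X^3Y^3+Y^3Z^3+Z^3X^3)$ of Corollary \ref{pair 2'} applies; the coordinate-permuting symmetries and the extra Hesse automorphism (the one that creates the nine further quasi-Galois points) then preserve $C$ only when $a=-10$, so that the reflections generate a Hessian-type group of order $216$ whose space of invariant sextics is one-dimensional.

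Finally, for the converse (sufficiency) directions I would exhibit the quasi-Galois points of each listed curve explicitly---the diagonal cube-root reflections at the coordinate points and their $\Gamma$-orbits---check that each has $|G[P]|=3$ exactly, and then use the numerical list together with Theorem \ref{outer n=3} to conclude that the counts are exactly $12$, $8$, $4$ and that the three curves are pairwise projectively inequivalent. The main obstacle throughout is the invariant-theoretic bookkeeping: correctly identifying $\Gamma$ (in particular the order-$216$ group in the $12$ case, and in the $4$ case the precise character by which $Z$ transforms, which is exactly what permits the surviving $Z^2$ term), and then extracting the specific scalars $-10$, $20$, $-8$ from the resulting one-dimensional invariant spaces.
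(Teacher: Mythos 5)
Your proposal is correct in outline and follows essentially the same skeleton as the paper's proof. The upper bound is identical: six forced flexes per point of $\Delta_3'$ (Proposition~\ref{tangent1}), pairwise disjoint by Proposition~\ref{two quasi-Galois}, against the total $3d(d-2)=72$, giving $\delta'[3]\le 12$. The distinguished point $Q$ with $F[Q]\setminus\{Q\}=\ell$ comes from Lemma~\ref{four quasi-Galois} and Proposition~\ref{pair 2} exactly as in the paper, and your converse verifications (exhibit the reflections, then use the count dichotomy) match the paper's. The genuine differences are expository: you obtain the value list by citing Theorem~\ref{outer n=3} and excluding only $20$, where the paper instead shows directly that $\delta'[3]\ge 5$ forces a $G$-pair (else one counts $3^2+3+1=13>12$ points) and then splits by whether a quasi-Galois point lies off the triangle $\{XYZ=0\}$; and you phrase the coefficient determinations as invariant theory (uniqueness of the $A_4$-invariant binary sextic, absence of an invariant quadratic, the vertex quartic $X^3Y+Y^4$, the one-dimensional space of Hessian-group invariants), where the paper carries out the same determinations by explicit matrix and coefficient comparisons. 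Both are the same computation in different clothing; your packaging is cleaner conceptually but defers the work to identifying $\Gamma$, which you correctly flag as the main obstacle. Note also that your uniform frame ``all order-$3$ points lie on $\ell$'' is false in the $\delta'[3]=12$ case (at most four are collinear), though your actual case-(1) treatment does not use it.

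Two steps need repair. First, in case (1) Corollary~\ref{pair 2'} only yields $X^6+Y^6+Z^6+aX^3Y^3+bY^3Z^3+cZ^3X^3$ with three independent coefficients; the symmetric one-parameter form is not available until the coordinate-permuting automorphisms are \emph{produced}, which the paper does by normalizing the off-triangle point to $(1:1:1)$, pinning down the generator $\tau\in G[P_{00}]$ (the elimination of the wrong exponents, giving $\lambda=\mu=1$, uses that points such as $(1:0:1)$ are not quasi-Galois), and observing that $A_{\sigma}A_{\tau}A_{\sigma}A_{\tau}$ is a permutation matrix. You cannot impose invariance under symmetries not yet known to lie in ${\rm Aut}(C)$, so this computation (or an equivalent identification of $\Gamma$ as the Hessian group) cannot be skipped. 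Second, in case (3) your inference that $3\nmid|G[Q]|$ ``because there is no $G$-pair on $\ell$'' is not immediate, since $Q\notin\ell$; the clean argument is that $3\mid|G[Q]|$ together with evenness and $|G[Q]|$ dividing $\deg\pi_Q=6$ would force $|G[Q]|=6$, whence by Fact~\ref{standard form} the equation is $Z^6+G(X,Y)=0$ with $G$ the unique $A_4$-invariant sextic, i.e.\ the case-(2) curve with $\delta'[3]=8$, contradicting $\delta'[3]=4$. This is the same dichotomy the paper exploits at the end of its proof, where $\delta'[3]=8$ forces $(0:0:1)$ to be a Galois point and hence $a=0$. Both gaps are fillable within your framework, so the plan is sound.
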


\begin{proof} 
Let $P \in \mathbb{P}^2 \setminus C$ be a point with $|G[P]|=3$. 
By Proposition \ref{tangent1}, there exist $d=6$ points $Q \in C \cap (F[P] \setminus \{P\})$ such that $P \in T_QC$ and $I_Q(C, T_QC) \ge 3$. 
Therefore, for each quasi-Galois point $P \in C$ with $|G[P]|=3$, we need at least $6$ flexes. 
It follows from Proposition \ref{two quasi-Galois} that there exists no point $Q \in C$ such that $Q \in F[P_1] \cap F[P_2]$ for different quasi-Galois points $P_1$ and $P_2$ with $|G[P_1]|=|G[P_2]|=3$. 
By the flex formula \cite[Theorem 1.5.10]{namba}, we have $\delta'[3] \times 6 \le 72$. 
Therefore, $\delta'[3] \le 12$.  

Assume that $\delta'[3] \ge 5$. 
First, we prove that there exists a $G$-pair. 
Assume by contradiction that $\sigma(P_2) \ne P_2$ for any quasi-Galois points $P_1, P_2$ and any generator $\sigma \in G[P_1]$. 
By Theorem \ref{outer n=3}, quasi-Galois points are not collinear. 
Then it is inferred that there exist $3^2+3+1=13$ quasi-Galois points. 
This is a contradiction. 
Therefore, there exists a $G$-pair $(P, P')$. 
According to Proposition \ref{pair 2}, for a suitable system of coordinates, we can assume that $P=(1:0:0)$, $P'=(0:1:0)$, generators $\sigma$ of $G[P]$ and $\sigma'$ of $G[P']$ are given by the matrices 
$$ A_{\sigma}=\left(\begin{array}{ccc} 
\omega & 0 & 0 \\
0 & 1 & 0 \\
0 & 0 & 1 \end{array}\right), \ 
A_{\sigma'}=\left(\begin{array}{ccc} 
1 & 0 & 0 \\
0 & \omega & 0 \\
0 & 0 & 1 \end{array}\right) $$
respectively, where $\omega^2+\omega+1=0$, and $C$ is given by 
$$X^6+aY^6+bZ^6+cX^3Y^3+dY^3Z^3+eZ^3X^3=0, $$
where $a, b, c, d, e \in K$. 
Then $P''=(0:0:1)$ is also quasi-Galois. 

Next, we consider the case where there exists a quasi-Galois point not contained in the set $S:=\overline{PP'}\cup\overline{P' P''}\cup\overline{P'' P}=\{XYZ=0\}$. 
Let $(\alpha :\beta:1)$ be a quasi-Galois point on $\mathbb{P}^2 \setminus S$. 
By using the linear transformation given by $(X:Y:Z) \mapsto ((1/\alpha)X:(1/\beta)Y:Z)$, we can assume that $(\alpha:\beta:1)=(1:1:1)$. 
Then points 
$$ P_{ij}:=(\omega^i:\omega^j:1)$$
are quasi-Galois for $i, j=0, 1, 2$, and the set $\{P, P', P''\} \cup \{P_{ij} \ | \ i, j=0, 1, 2\}$ consists of all quasi-Galois points for $C$. 

We compute a generator $\tau \in G[P_{00}]$, where $P_{00}=(1:1:1)$. 
It follows that $\tau(P)=P_{i0}$, $\tau(P')=P_{0j}$ and $\tau(P'')=P_{kk}$ for some $i, j, k \ne 0$. 
We can assume that $k=2$ and $\tau(P'')=(1:1:\omega)$.
Then $\tau$ is represented by the matrix
$$ A_{\tau}=\left( 
\begin{array}{ccc}
\lambda \omega^i & \mu & 1 \\ 
\lambda & \mu \omega^j & 1 \\
\lambda & \mu & \omega 
\end{array}
\right), $$ 
for some $\lambda, \mu \in K \setminus \{0\}$. 
By using the condition $\tau((1:1:1))=(1:1:1)$, it follows that
$$ \lambda=\frac{\omega-1}{\omega^i-1}, \ \mu=\frac{\omega-1}{\omega^j-1}. $$
If $i=2$, then $\lambda=1/(\omega+1)=-\omega$ and $\tau((1:0:1))=(0:1:0)$. 
Since $(1:0:1)$ is not quasi-Galois, this is a contradiction. 
It follows that $i=1$ and $\lambda=1$. 
Similarly, it follows that $j=1$ and $\mu=1$. 

Note that 
$$ A_{\sigma_1}A_{\tau}A_{\sigma_1}A_{\tau}=
\left(\begin{array}{ccc} 
3\omega & 0 & 0 \\
0 & 0 & 3\omega \\
0 & 3\omega & 0 
\end{array}\right). $$
The linear transformation given by $(X:Y:Z) \mapsto (X:Z:Y)$ acts on $C$. 
Similarly, the linear transformation given by $(X:Y:Z) \mapsto (Z:Y:X)$ acts on $C$. 
Therefore, the defining equation of $C$ is of the form 
$$ F=X^6+Y^6+Z^6+a(X^3Z^3+Y^3Z^3+Z^3X^3)=0$$
for some $a \in K$. 
We consider the action by $\tau$. 
Polynomials $(\tau^{-1})^*F$ and $F$ are the same up to a constant. 
We consider the coefficient of $X^4YZ$. 
It follows that the coefficient of $X^4YZ$ is $30\omega$ for $(\omega X+Y+Z)^6, (X+\omega Y+Z)^6$ and $(X+Y+\omega Z)^6$. 
The coefficient is $3\omega$ for $(\omega X+Y+Z)^3(X+\omega Y+Z)^3$, $(X+\omega Y+Z)^3(X+Y+\omega Z)^3$ and $(X+Y+\omega Z)^3(\omega X+Y+Z)^3$. 
It follows that $a=-10$. 

We consider the case where all quasi-Galois points are contained in the set $S$.  
If there exist two quasi-Galois points $P_2, P_3 \not\in \{P, P', P''\}$ which are contained in $X=0$ and $Y=0$ respectively, then $(P_2, P_3)$ is not a $G$-pair, since $(P_2, P)$ is a $G$-pair, $(P_2, P')$ is not a $G$-pair, and $P_3 \in \overline{PP''}$. 
We can find quasi-Galois points in $\mathbb{P}^2\setminus S$ by the actions associated with $P_2$. 
Therefore, we can assume that quasi-Galois points different from $\ne P, P', P''$ are contained in one line $\subset S$.  
We can assume that such a line is $\overline{P P'}$. 
Then, by Theorem \ref{outer n=3}, $\#\Delta_3' \cap \overline{P P'}=8$.  
Furthermore, $|G[P'']|=6$, that is, $P''$ is a Galois point.  
Let $P_1=P$. 
We use the same symbols in the proof of Lemma \ref{four quasi-Galois}.
It follows that $\sigma_2\sigma_1^2$ is represented by the matrix
$$ A_{\sigma_2\sigma_1^2}=\left(\begin{array}{ccc}
-\alpha & 2\omega^2 \alpha & 0 \\
\omega \alpha& \alpha & 0 \\ 
0 & 0 & 1
\end{array}\right). $$
Note that the restriction of $\sigma_2\sigma_1^2$ on the line $\overline{P P'}$ is of order two, and fixed points of $\sigma_2\sigma_1^2$ on the line $\overline{P P'}$ are $((-1+\sqrt{3})\omega^2:1:0)$ and $((-1-\sqrt{3})\omega^2:1:0)$. 
Since $A_4$ acts on the six points given by $C \cap \overline{P P'}$ and contains exactly three elements of order two, it follows that 
$$C \cap \overline{P P'}=\{(-1+\sqrt{3})\omega^i:1:0) \ | \ i=0, 1, 2\} \cup \{((-1-\sqrt{3})\omega^i:1:0) \ | \ i=0, 1,2\}.$$ 
Then the defining equation of $C$ is of the form 
$$ F(X, Y, Z)=X^6+20X^3Y^3-8Y^6+Z^6=0. $$ 

Assume that $\delta'[3]=4$. 
Then the four quasi-Galois points $P_1, P_2, P_3, P_4$ are contained in a unique line $\ell$, and $\varphi(G) \cong A_4$. 
We can assume that $P_1=(1:0:0)$, $P_2=(1:-1:0)$, $\sigma_1 \in G[P_1]$ and $\sigma_2 \in G[P_2]$ are generators represented by 
$$ 
A_{\sigma_1}=\left(\begin{array}{ccc}
\omega & 0 & 1 \\
0 & 1 & 0 \\
0 & 0 & 1 
\end{array}\right), \
A_{\sigma_2}=\left(\begin{array}{ccc}
-\omega \alpha & 2\omega^2 \alpha & 0 \\
\omega^2 \alpha & \alpha & 0 \\
0 & 0 & 1 
\end{array}\right)
$$
respectively. 
By Lemma \ref{four quasi-Galois}, it follows that $|G[Q]| \ge 2$ and $\ell=F[Q] \setminus \{Q\}$ for the point $Q=(0:0:1) \in F[P_1] \cap F[P_2]$.   
Then the defining equation of $C$ is of the form  
$$
F(X, Y, Z)=Z^6+a Y^2Z^4+(b X^3Y+c Y^4)Z^2+G(X, Y)=0, 
$$
where $a, b, c \in K$ and $G$ is a homogeneous polynomial of degree $6$. 
Similarly to the previous paragraph, we can assume that 
$$ G(X, Y)=X^6+20X^3Y^3-8Y^3. $$
By comparing the coefficient of $Y^2Z^4$ of $F(-\omega \alpha X+2\omega^2 \alpha Y, \omega^2\alpha X+\alpha Y, Z)$ and $F(X, Y, Z)$, it follows that $a=0$.  
By comparing the coefficient of $X^4Z^2$ of $F(-\omega \alpha X+2\omega^2 \alpha Y, \omega^2\alpha X+\alpha Y, Z)$ and $F(X, Y, Z)$, it follows that $b=c$. 
Then the defining equation of $C$ is of the form 
$$ 
Z^6+c(X^3Y+Y^4)Z^2+(X^6+20X^3Y^3-8Y^6)=0. 
$$

On the contrary, we consider the curve $C$ given by 
$$ X^6+Y^6+Z^6-10(X^3Y^3+Y^3Z^3+Z^3X^3)=0. $$
By Fact \ref{standard form}, points $P=(1:0:0)$, $P'=(0:1:0)$ are quasi-Galois, and groups $G[P], G[P']$ are generated by the linear transformations $\sigma, \sigma'$ given by 
$$ A_{\sigma}=\left(\begin{array}{ccc} 
\omega & 0 & 0 \\
0 & 1 & 0 \\
0 & 0 & 1 \end{array}\right), \ 
A_{\sigma'}=\left(\begin{array}{ccc} 
1 & 0 & 0 \\
0 & \omega & 0 \\
0 & 0 & 1 \end{array}\right) $$
respectively. 
Let $\tau$ be the linear transformation given by the matrix 
$$ A_{\tau}=\left(\begin{array}{ccc}
\omega & 1 & 1 \\
1 & \omega & 1 \\
1 & 1 & \omega 
\end{array} \right). $$
Then $\tau(C)=C$ and 
$$ \tau^*\left(\frac{x-y}{y-1}\right)=\frac{x-y}{y-1}.  $$ 
Therefore, the point $(1:1:1)$ is quasi-Galois on $\mathbb{P}^2 \setminus \{XYZ=0\}$. 
By considering the actions of $\sigma$ and $\sigma'$, it follows that $\delta'[3] \ge 12$. 
Since we confirmed $\delta'[3] \le 12$ in the first paragraph, it follows that $\delta'[3]=12$.

We consider the curve $C$ defined by 
$$ F(X, Y, Z)=X^6+20X^3Y^3-8Y^6+Z^6=0. $$
To prove $\delta'[3]=8$, we have to prove that the linear transformation $\sigma_2$ represented by 
$$A_{\sigma_2}=\left(\begin{array}{ccc}
-\omega \alpha & 2\omega^2 \alpha & 0 \\
\omega^2 \alpha & \alpha & 0 \\
0 & 0 & 1
\end{array}\right) $$
acts on $F$. 
To do this, we prove that the linear transformation $\sigma_2\sigma_1^2$ represented by 
$$ A_{\sigma_2\sigma_1^2}=\left(\begin{array}{ccc}
-\alpha & 2\omega^2 \alpha & 0 \\
\omega \alpha & \alpha & 0 \\ 
0 & 0 & 1
\end{array}\right) $$
acts on $C$. 
Let $G(X, Y)=X^6+20X^3Y^3-8Y^6$. 
It is easily verified that the coefficient of $X^6$ for $G(-\alpha X+2\omega^2 \alpha Y, \omega \alpha X+\alpha Y)$ is $1$. 
Further, it is inferred that the set  
$$\{(-1+\sqrt{3})\omega^i:1:0) \ | \ i=0, 1, 2\} \cup \{((-1-\sqrt{3})\omega^i:1:0) \ | \ i=0, 1,2\}$$ 
is invariant under the action of $A_{\sigma_2\sigma_1^2}$. 
The claim follows. 

We consider the curve defined by 
$$ F(X, Y, Z)=Z^6+a(X^3Y+Y^4)Z^2+(X^6+20X^3Y^3-8Y^6)=0, $$
where $a \in K \setminus \{0\}$.  
Assume that $C$ is not in the case (1). 
It is obvious that $F(\omega X, Y, Z)=F$. 
According to the previous paragraph, for the polynomial $G(X, Y)=X^6+20X^3Y^3-8Y^6$, 
$$G(-\omega\alpha X+2\omega^2 \alpha Y, \omega^2 \alpha X+\alpha Y)=G(X, Y). $$
It is not difficult to confirm that for the polynomial $H(X, Y)=X^3Y+Y^4$, 
$$H(-\omega\alpha X+2\omega^2 \alpha Y, \omega^2 \alpha X+\alpha Y)=H(X, Y) $$
(see also \cite[Lemma 1]{kty}). 
Therefore, the linear transformation represented by 
$$A_{\sigma_2}=\left(\begin{array}{ccc}
-\omega \alpha & 2\omega^2 \alpha & 0 \\
\omega^2 \alpha & \alpha & 0 \\
0 & 0 & 1
\end{array}\right) $$
acts on $C$, and hence, $\delta'[3]=4$ or $8$. 
If $\delta'[3]=8$, then the point $(0:0:1)$ must be a Galois point. 
This forces $a=0$. 
The claim $\delta'[3]=4$ follows. 
\end{proof} 

\begin{remark}
If $\delta'[3]=2$, then $\delta'[6]=1$. 
If $\delta'[3]=4$, then $\delta'[2]=1$. 
\end{remark}

As an application, on the automorphism group ${\rm Aut}(C)$, we have the following (see Part I \cite{fmt} for the definition of $G_3(C)$). 

\begin{theorem} \label{sextic, generated}
Let $C$ be the plane curve defined by $X^6+Y^6+Z^6-10(X^3Y^3+Y^3Z^3+Z^3X^3)=0$. 
Then
$$ G_3(C)={\rm Aut}(C). $$
\end{theorem}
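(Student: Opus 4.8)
The inclusion $G_3(C)\subseteq {\rm Aut}(C)$ is immediate, so the task is to prove ${\rm Aut}(C)\subseteq G_3(C)$, where I recall that every automorphism of the smooth sextic $C$ is linear, so ${\rm Aut}(C)\subseteq {\rm PGL}(3,K)$. The plan is to exploit the explicit data obtained in the proof of Theorem \ref{sextic}(1): there the twelve points of $\Delta'_3$ are shown to be the three coordinate vertices together with the nine points $(\omega^i:\omega^j:1)$, and the associated groups are generated by the diagonal reflections $\sigma,\sigma'$ spanning $T:=\langle \sigma,\sigma'\rangle\cong(\mathbb Z/3)^2$, by $\sigma'':={\rm diag}(1,1,\omega)$ at $V:=(0:0:1)$, and by the reflection $\tau$ fixing $(1:1:1)$. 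Since the nine points $(\omega^i:\omega^j:1)$ form a single $T$-orbit, every $G[P]$ with $P\in\Delta'_3$ is a $T$-conjugate of $\langle\sigma\rangle$, $\langle\sigma'\rangle$ or $\langle\tau\rangle$; as $\sigma''\in T$, this gives $G_3(C)=\langle T,\tau\rangle$. Moreover, being a quasi-Galois point with $|G[P]|=3$ is intrinsic, so ${\rm Aut}(C)$ permutes the set $\Delta'_3$.

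First I would establish that $G_3(C)$ acts transitively on $\Delta'_3$. A direct check shows that $\tau$ carries each coordinate vertex into the nine-point set (for instance $\tau(1:0:0)=(\omega:1:1)=(\omega^{1}:\omega^{0}:1)$), while $T$ permutes those nine points transitively; combining these, all twelve points lie in one $G_3(C)$-orbit. Transitivity then yields the factorization ${\rm Aut}(C)=G_3(C)\cdot{\rm Stab}_{{\rm Aut}(C)}(V)$, because for $g\in{\rm Aut}(C)$ we may choose $h\in G_3(C)$ with $h(V)=g(V)$ and note $h^{-1}g\in{\rm Stab}(V)$. Hence it suffices to prove ${\rm Stab}_{{\rm Aut}(C)}(V)\subseteq G_3(C)$.

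To analyze the stabilizer, let $g\in{\rm Aut}(C)$ fix $V$. Then $g G[V] g^{-1}=G[g(V)]=G[V]=\langle\sigma''\rangle$, so $g$ normalizes $\langle\sigma''\rangle$ and by Corollary \ref{fixed locus} preserves $F[V]=\{V\}\cup\{Z=0\}$; thus $g$ fixes $V$ and the line $Z=0$, and so $g={\rm diag}(M,h)$ is block-diagonal with $M\in{\rm PGL}(2,K)$ acting on $(X:Y)$. Writing $F=(X^6-10X^3Y^3+Y^6)-10(X^3+Y^3)Z^3+Z^6$, invariance of $F$ forces $M$ to preserve the zero loci of $X^3+Y^3$ and of $X^6-10X^3Y^3+Y^6$, and it forces $h^3$ to be determined by $M$. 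Thus $M$ lies in the symmetry group $\mathfrak S_3$ of the three points $\{X^3+Y^3=0\}$, while $h$ is pinned down up to a cube root of unity, i.e. up to $\sigma''$. Since $T\subseteq{\rm Stab}_{{\rm Aut}(C)}(V)$ already realizes the rotation subgroup $\mathbb Z/3\subset\mathfrak S_3$ together with $\langle\sigma''\rangle$, and the involution $t:(X:Y:Z)\mapsto(Y:X:Z)$ visibly preserves $F$, this gives ${\rm Stab}_{{\rm Aut}(C)}(V)=\langle T,t\rangle$ of order exactly $18$.

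It remains to show that the single involution $t$ belongs to $G_3(C)=\langle T,\tau\rangle$; this is the main obstacle, as it amounts to producing an order-two element from order-three reflections. I would settle it by an explicit finite computation, exhibiting $t$ as a word in $\sigma,\sigma',\tau$ (equivalently, producing an element of $\langle T,\tau\rangle$ that fixes $V$ but lies outside $T$); the matrices are $3\times 3$, so this is a bounded verification, and the earlier analysis guarantees that such an element, if it exists, must coincide with $t$ modulo $T$. Once $t\in G_3(C)$ is established we obtain ${\rm Stab}_{{\rm Aut}(C)}(V)\subseteq G_3(C)$, whence $|{\rm Aut}(C)|=12\cdot 18=216=|G_3(C)|$ and the equality $G_3(C)={\rm Aut}(C)$ follows.
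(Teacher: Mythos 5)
Your strategy is correct and realizable, but it is organized genuinely differently from the paper's, even though both rest on the same two pillars: that ${\rm Aut}(C)$ permutes the twelve points of $\Delta'_3$, and that $G_3(C)$ contains the coordinate transpositions. The paper proceeds by successive normalization of an arbitrary $\sigma \in {\rm Aut}(C)$: it first composes with elements of $G_3(C)$ to arrange $\phi\sigma(P)=P$ (your transitivity step, done implicitly), then uses the intrinsic characterization of the line $F[P]\setminus\{P\}$ as the unique line with $I_Q(C, \overline{PQ})=3$ at every point of $C$ on it, together with the transposition $(X:Y:Z)\mapsto(X:Z:Y)$, to fix all three vertices; the residual automorphism is then ${\rm diag}(\alpha,\beta,1)$ with $\alpha^3=\beta^3=1$ and is killed by elements of $G[P]$ and $G[P']$. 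No group order is ever computed. You instead run orbit--stabilizer: transitivity of $\langle T,\tau\rangle$ on $\Delta'_3$ (your check $\tau(1:0:0)=(\omega:1:1)$ plus the $T$-action is correct), the factorization ${\rm Aut}(C)=G_3(C)\cdot{\rm Stab}(V)$, and an exact determination ${\rm Stab}_{{\rm Aut}(C)}(V)=\langle T,t\rangle$ of order $18$ from the $Z$-graded decomposition of $F$ — your block-diagonality argument via Corollary \ref{fixed locus} and the computation that the kernel of the map to the symmetry group of $\{X^3+Y^3=0\}$ is exactly $\langle\sigma''\rangle$ are both sound. What your route buys, beyond the theorem, is the order $|{\rm Aut}(C)|=12\cdot 18=216$, matching the paper's Hessian-group remark; what the paper's route buys is never having to compute a stabilizer wholesale. (Incidentally, you do not need the equality $G_3(C)=\langle T,\tau\rangle$, only the inclusion $\supseteq$, since the argument ends with ${\rm Aut}(C)\subseteq\langle T,\tau\rangle\subseteq G_3(C)$.)

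The one step you leave unexecuted — exhibiting $t:(X:Y:Z)\mapsto(Y:X:Z)$ as a word in $\sigma,\sigma',\tau$ — is a real obligation, since if no such word existed the theorem itself would fail; as written, your proof is incomplete at precisely its self-declared ``main obstacle.'' However, the needed identity is exactly the one the paper computes in the proof of Theorem \ref{sextic}: there $(A_{\sigma}A_{\tau})^2=3\omega\cdot A$, where $A$ is the permutation matrix of $(X:Y:Z)\mapsto(X:Z:Y)$, and symmetrically $(A_{\sigma'}A_{\tau})^2$ is $3\omega$ times the matrix of $(X:Y:Z)\mapsto(Z:Y:X)$. Conjugating the one transposition by the other yields the third, so
$$ t=\bigl((\sigma\tau)^2\bigr)\,\bigl((\sigma'\tau)^2\bigr)\,\bigl((\sigma\tau)^2\bigr)\in\langle\sigma,\sigma',\tau\rangle\subseteq G_3(C), $$
which fixes $V$ and lies outside $T$, exactly as your reduction requires. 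With this single computation inserted, your proof closes completely.
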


\begin{proof}
Let $P=(1:0:0)$, $P'=(0:1:0)$, $P''=(0:0:1)$, and let $P_{ij}=(\omega^i:\omega^j:1)$ for $i, j=0, 1, 2$, where $\omega^2+\omega+1=0$. 
Then the set $\Delta':=\{P, P', P''\} \cup \{P_{ij} \ | \ i, j=0, 1, 2\}$ consists of all quasi-Galois points $P$ with $|G[P]|=3$. 

Let $\sigma \in {\rm Aut}(C) \subset {\rm Aut}(\mathbb{P}^2)$.
Then $\sigma$ acts on $\Delta'$. 
If $\sigma(P)=P'$ or $P''$, then there exists $\phi_1 \in G_3(C)$ such that $\phi_1\sigma(P)=P$,  since the automorphisms $(X:Y:Z) \mapsto (Y:X:Z)$ and $(X:Y:Z) \mapsto (Z:Y:X)$ are contained in $G_3(C)$ as in the proof of Theorem \ref{sextic}. 
If $\sigma(P)=P_{ij}$ for some $i, j$, then there exists $\phi_2 \in G[P_{kj}]$ for $k \ne i$ such that $\phi_2\sigma(P)=P$. 
Therefore, there exists $\phi \in G_3(C)$ such that $\phi\sigma(P)=P$. 
The line $F[P]\setminus \{P\}$ is a unique line $\ell$ such that $I_Q(C, \overline{P Q})=3$ for any $Q \in C \cap \ell$. 
By this fact and $\phi\sigma(P)=P$, $\phi\sigma(F[P]\setminus\{P\})=F[P]\setminus \{P\}$. 
Since $\Delta' \cap F[P]\setminus \{P\}=\{P', P''\}$ and the automorphism $(X:Y:Z) \mapsto (X:Z:Y)$ is contained in $G_3(C)$, there exists $\phi_3 \in G_3(C)$ such that $\phi_3\sigma(P)=P$, $\phi_3\sigma(P')=P'$ and $\phi_3\sigma(P'')=P''$. 
Then $\phi_3\sigma$ is represented by the matrix of the form 
$$ \left(\begin{array}{ccc}
\alpha & 0 & 0 \\
0 & \beta & 0 \\
0 & 0 & 1 
\end{array}\right)$$
for some $\alpha, \beta \in K$. 
By considering the action on the defining equation, it follows that $\alpha^3=1$ and $\beta^3=1$. 
If we take 
$$
\phi_4=\left(\begin{array}{ccc}
\alpha^{-1} & 0 & 0 \\
0 & 1 & 0 \\
0 & 0 & 1 
\end{array}\right) \in G[P], \ 
\phi_5=\left(\begin{array}{ccc}
1 & 0 & 0 \\
0 & \beta^{-1} & 0 \\
0 & 0 & 1 
\end{array}\right) \in G[P'],$$ 
then $\phi_5\phi_4\phi_3\sigma=1$ on $\mathbb{P}^2$. 
Therefore, $\sigma=\phi_3^{-1}\phi_4^{-1}\phi_5^{-1} \in G_3(C)$. 
\end{proof}

\begin{remark}
For the curve defined by $X^6+Y^6+Z^6-10(X^3Y^3+Y^3Z^3+Z^3X^3)=0$, it is known that the group ${\rm Aut}(C)$ is isomorphic to the Hessian group of order $216$ (\cite{artebani-dolgachev}). 
\end{remark} 

\begin{theorem}
Let $C$ be the plane curve defined by $X^6+20X^3Y^3-8Y^6+Z^6=0$. 
Then there exist two exact sequences
$$ 0 \rightarrow \mathbb{Z}/6\mathbb{Z} \rightarrow G_3(C) \rightarrow A_4 \rightarrow 1, $$
$$ 0 \rightarrow \mathbb{Z}/6\mathbb{Z} \rightarrow {\rm Aut}(C) \rightarrow S_4 \rightarrow 1.  
$$
In particular, $|{\rm Aut}(C)|=144$ and $|G_3(C)|=72$. 
\end{theorem}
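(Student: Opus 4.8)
The plan is to realize both ${\rm Aut}(C)$ and $G_3(C)$ as extensions through the restriction homomorphism to the line $\ell=\{Z=0\}$, and the crux will be detecting the extra ``odd'' symmetry that distinguishes $S_4$ from $A_4$.

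First I would fix the geometry. This is the curve with $\delta'[3]=8$ established in Theorem \ref{sextic}, so $P''=(0:0:1)$ is a quasi-Galois point with $|G[P'']|=6$; moreover it is the unique point of $\mathbb{P}^2$ with $|G[\,\cdot\,]|=6$, since for $P\in C$ the order $|G[P]|$ divides $\deg\pi_P=d-1=5$, and among outer points uniqueness is given by Theorem \ref{outer n=3}. As every $\sigma\in{\rm Aut}(C)$ satisfies $G[\sigma(P'')]=\sigma G[P'']\sigma^{-1}$, the point $P''$ is fixed by ${\rm Aut}(C)$, and hence by Corollary \ref{fixed locus} the whole group preserves $\ell=F[P'']\setminus\{P''\}=\{Z=0\}$. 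This gives a homomorphism $\varphi\colon{\rm Aut}(C)\to{\rm Aut}(\ell)\cong{\rm PGL}(2,K)$, $\sigma\mapsto\sigma|_\ell$. I would then identify its kernel: an automorphism fixing $P''$ and every point of $\ell$ is represented by $(X:Y:Z)\mapsto(X:Y:cZ)$, and invariance of $F=X^6+20X^3Y^3-8Y^6+Z^6$ forces $c^6=1$; thus $\ker\varphi=G[P'']\cong\mathbb{Z}/6\mathbb{Z}$.

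Next I would compute $\varphi(G_3(C))$. The eight quasi-Galois points on $\ell$ give four distinct subgroups of order three in ${\rm PGL}(2,K)$ (the two members of each $G$-pair share an image, by Lemma \ref{pair 1} and Corollary \ref{fixed locus}); since all order-three elements of $S_4$ are $3$-cycles and these generate $A_4$, the four subgroups generate exactly $A_4$. Because $G_3(C)$ also contains the full $\ker\varphi=G[P'']$ — its order-three part being the product $\sigma_1'\sigma_1={\rm diag}(\omega,\omega,1)$ of the generators $\sigma_1\in G[P]$, $\sigma_1'\in G[P']$ of the pair $(P,P')$, and its involution being $(\sigma_2\sigma_1^2)^2={\rm diag}(-1,-1,1)$, as computed in Lemma \ref{four quasi-Galois} — I obtain the first exact sequence $0\to\mathbb{Z}/6\mathbb{Z}\to G_3(C)\to A_4\to1$ and $|G_3(C)|=6\cdot12=72$.

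For the full group I would bound and then pin down $\varphi({\rm Aut}(C))$. It contains $A_4$ and permutes the six distinct points $C\cap\ell=\{((-1\pm\sqrt{3})\omega^i:1:0)\ |\ i=0,1,2\}$ found in the proof of Theorem \ref{sextic}; so by Fact \ref{PGL} it is $A_4$, $S_4$ or $A_5$, and $A_5$ is impossible because a subgroup of ${\rm PGL}(2,K)$ isomorphic to $A_5$ has no orbit of length at most six on $\mathbb{P}^1$. The decisive step — which I expect to be the main obstacle — is to exhibit one automorphism outside $\varphi^{-1}(A_4)$. I would take $\hat\rho\colon(X:Y:Z)\mapsto(-2Y:X:\mu Z)$ with $\mu^6=-8$: a direct check gives $G(-2Y,X)=-8\,G(X,Y)$ for $G=X^6+20X^3Y^3-8Y^6$, so that $F(-2Y,X,\mu Z)=-8F$ and $\hat\rho\in{\rm Aut}(C)$. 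Its restriction $\rho=\varphi(\hat\rho)$ is the involution $t\mapsto-2/t$ (with $t=X/Y$ on $\ell$), which together with $r=\varphi(\sigma_1)\colon t\mapsto\omega t$ satisfies $\rho r\rho=r^{-1}$, so $\langle r,\rho\rangle\cong S_3$. As $A_4$ has no subgroup of order six and $r\in A_4$, this gives $\rho\notin A_4$; hence $\varphi({\rm Aut}(C))$ strictly contains $A_4$ and is not $A_5$, forcing $\varphi({\rm Aut}(C))=S_4$. This yields the second exact sequence $0\to\mathbb{Z}/6\mathbb{Z}\to{\rm Aut}(C)\to S_4\to1$ and $|{\rm Aut}(C)|=6\cdot24=144$. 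The only genuinely computational inputs are the two scalar identities $G(-2Y,X)=-8G$ and $\rho r\rho=r^{-1}$; everything else follows formally once $P''$ is known to be ${\rm Aut}(C)$-fixed.
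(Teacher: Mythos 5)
Your proposal is correct and takes essentially the same route as the paper: the restriction homomorphism $\varphi$ to the unique line $\ell=\{Z=0\}$ containing the eight order-three quasi-Galois points, kernel $G[P'']\cong\mathbb{Z}/6\mathbb{Z}$ via the ${\rm Aut}(C)$-fixed point $P''=(0:0:1)$, image $A_4$ for $G_3(C)$, and an explicit extra automorphism forcing $S_4$ --- indeed your $\hat\rho:(X:Y:Z)\mapsto(-2Y:X:\mu Z)$ restricts to $\ell$ as exactly the same involution $t\mapsto -2/t$ as the paper's matrix with entries $\sqrt{2}i$ and $1/(\sqrt{2}i)$. The only differences are that you spell out steps the paper leaves implicit (excluding $A_5$ via its orbit lengths $\ge 12$ on $\mathbb{P}^1$ against the six points of $C\cap\ell$, the $\langle r,\rho\rangle\cong S_3$ argument showing $\varphi(\hat\rho)\notin A_4$, and the containment $\ker\varphi\subset G_3(C)$ via $\sigma_1'\sigma_1$ and $(\sigma_2\sigma_1^2)^2$), and these justifications are sound.
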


\begin{proof} 
Let $\ell$ be the line defined by $Z=0$, which contains $8$ points $P$ with $|G[P]|=3$. 
Since $\ell$ is a unique line containing $8$ points $P$ with $|G[P]|=3$, there exists a homomorphism $\varphi: {\rm Aut}(C) \rightarrow {\rm Aut}(\ell) \cong \mathbb{P}^1$.
Since $\varphi(G_3(C))=\varphi(\langle G[P_1], G[P_2]\rangle)$ for each points $P_1$ and $P_2$ such that $(P_1, P_2)$ is not a $G$-pair, it follows that  $\varphi(G_3(C)) \cong A_4$.  
Since $Q=(0:0:1)$ is a unique Galois point, the group ${\rm Aut}(C)$ fixes $Q$. 
This implies that ${\rm Ker} \ \varphi=G[Q] \cong \mathbb{Z}/6\mathbb{Z}$. 
The former exact sequence is obtained. 
On the other hand, the linear transformation 
$$ \left(\begin{array}{ccc}
0 & \sqrt{2} i & 0 \\
\frac{1}{\sqrt{2} i} & 0 & 0 \\
0 & 0 & 1 
\end{array} \right) $$
acts on $C$, where $i^2=-1$. 
This implies that $\varphi({\rm Aut}(C)) \cong S_4$. 
The latter exact sequence is obtained. 
\end{proof}

\section{Curves of degree four} 
In this section, we assume that $C$ is smooth and of degree $d=4$. 
The set $\Delta'_{\ge 2}$ of all quasi-Galois points in $\mathbb P^2 \setminus C$ for $C$ is denoted by $\Delta'$.  
If $P \in \Delta'$, then there exists a unique involution in $G[P]$, since $G[P]$ is a cyclic group of order $2$ or $4$. 
First, we note the following. 

\begin{lemma} \label{four cover}
If $P \in \Delta'$, then we have the following. 
\begin{itemize}
\item[(1)] There exist exactly four lines $\ell \ni P$ such that $C \cap \ell$ consists of one or two points, and the tangent line at each point of $C \cap \ell$ is equal to $\ell$. 
\item[(2)] There does not exist a line $\ell \ni P$ such that $I_Q(C, \ell)=3$ for some $Q \in C \cap \ell$.  
\end{itemize}
\end{lemma}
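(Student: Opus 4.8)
The plan is to split according to the two possible values $n:=|G[P]|\in\{2,4\}$ and to put $C$ into the standard form of Fact~\ref{standard form}: choose coordinates so that $P=(1:0:0)$, a generator $\sigma\in G[P]$ is the diagonal matrix $\mathrm{diag}(\zeta,1,1)$ with $\zeta$ a primitive $n$-th root of unity, and (after scaling the leading coefficient to $1$) $C$ is defined by $X^{4}+G_{2}(Y,Z)X^{2}+G_{4}(Y,Z)=0$ when $n=2$, and by $X^{4}+G_{4}(Y,Z)=0$ when $n=4$. Every line $\ell\ni P$ has the form $zY-yZ=0$ for a unique $(y:z)\in\mathbb{P}^{1}$, and since $P\notin C$ the intersection $C\cap\ell$ is cut out by the degree-$4$ polynomial $\phi_{(y:z)}(X):=X^{4}+G_{2}(y,z)X^{2}+G_{4}(y,z)$ (with $G_{2}=0$ when $n=4$); the intersection multiplicity $I_{Q}(C,\ell)$ is the multiplicity of the corresponding root of $\phi_{(y:z)}$, so $\ell$ is tangent at $Q$ precisely when that multiplicity is at least $2$.

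For part~(2) I would exploit the $\zeta$-symmetry of $\phi_{(y:z)}$. When $n=4$ the polynomial is $X^{4}+c$, whose roots have multiplicity $1$ if $c\neq 0$ and multiplicity $4$ (the root $X=0$) if $c=0$. When $n=2$ the polynomial is even in $X$, so any nonzero root $r$ forces $-r$ to be a root of the same multiplicity; a root of multiplicity $3$ with $r\neq 0$ would then force total degree $\ge 6$, while the root $X=0$ can occur only with even multiplicity. In either case no root has multiplicity exactly $3$, so there is no line with $I_{Q}(C,\ell)=3$.

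For part~(1) I would count the $(y:z)$ for which every root of $\phi_{(y:z)}$ has multiplicity $\ge 2$. In the case $n=4$ this holds exactly when $c=G_{4}(y,z)=0$, yielding the single root $X=0$ of multiplicity $4$ (a total inflection); smoothness of $C$ forces $G_{4}$ to have four distinct zeros, since a multiple zero would produce a singular point of $C$ on $\{X=0\}$, so there are exactly four such lines and no bitangents. In the case $n=2$ I would set $u=X^{2}$ and analyse $\psi(u)=u^{2}+G_{2}(y,z)u+G_{4}(y,z)$: every root of $\phi_{(y:z)}$ has multiplicity $\ge 2$ exactly when $\psi$ has a double root $u_{0}$, i.e.\ when the binary quartic $\Delta(Y,Z):=G_{2}(Y,Z)^{2}-4G_{4}(Y,Z)$ vanishes at $(y:z)$; here $u_{0}\neq 0$ gives a bitangent $\{(\pm\sqrt{u_{0}}:y:z)\}$ and $u_{0}=0$ gives the total inflection $X=0$ of multiplicity $4$.

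It remains to prove that $\Delta$ has exactly four distinct roots. For this I would pass to the quotient $C':=C/\langle\sigma\rangle$, which is smooth because $C$ is; the degree-$2$ cover $C\to C'$ is branched precisely at the four points of $C\cap\{X=0\}$, so Riemann--Hurwitz gives $g(C')=1$. The projection factors as $C\to C'\xrightarrow{\rho}\mathbb{P}^{1}$ with $\rho$ of degree $2$, and $\rho$ (the map $w=X^{2},(y:z)\mapsto(y:z)$) is branched exactly where $\Delta=0$; since a smooth genus-$1$ double cover of $\mathbb{P}^{1}$ has exactly four simple branch points, $\Delta$ has exactly four distinct roots, giving exactly four lines as in~(1). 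I expect this last step—identifying the branch locus of $\rho$ with the zero locus of $\Delta$ and pinning the count to four via $g(C')=1$—to be the main technical obstacle; the remainder is the symmetry argument for~(2) and the smoothness check that $G_{4}$ (resp.\ that the genus forces $\Delta$) has no repeated root.
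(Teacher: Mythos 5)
Your proposal is correct, and at the decisive point it coincides with the paper's own proof: the paper likewise factors $\pi_P$ through the involution $\sigma \in G[P]$ as $C \rightarrow C/\sigma \rightarrow \mathbb{P}^1$, deduces $g(C/\sigma)=1$ from Riemann--Hurwitz because $g_P$ is branched at exactly the four points of $C \cap (F[P]\setminus\{P\})$, and then concludes that the second double cover has exactly four branch points --- which is exactly your final step in the case $n=2$. What you do differently is the packaging. First, you implement the count in coordinates via the standard form of Fact \ref{standard form}, identifying the four branch points of $C/\sigma \rightarrow \mathbb{P}^1$ with the (necessarily simple) zeros of the discriminant $\Delta = G_2^2-4G_4$ (resp.\ of $G_4$ when $|G[P]|=4$); this makes the four lines explicit, which the paper's coordinate-free argument does not, and your smoothness check that $G_4$ is squarefree (valid because $\partial_X F$ vanishes identically on $\{X=0\}$, so a repeated root of $G_4$ would give a singular point) replaces the paper's appeal to Proposition \ref{tangent1} for the count of branch points of $g_P$. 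Second, for assertion (2) the paper just observes that ramification indices are multiplicative in a tower of two double covers, hence $I_Q(C,\ell)=e_Q \in \{1,2,4\}$; your parity argument on the even fiber polynomial $\phi_{(y:z)}$ is an equivalent, purely algebraic rendering of the same fact. Two minor remarks: your separate case $n=4$ is redundant, since $G[P]$ is cyclic of order $2$ or $4$ and you may always run the argument with its unique involution (your $n=4$ computation is then the special case $G_2=0$); and the step you flag as the main obstacle --- that the branch locus of $\rho$ is exactly $\{\Delta=0\}$ --- does go through as you sketch it: over a zero of $\Delta$ the fiber of $\pi_P$ is either a $\sigma$-orbit $\{\pm\sqrt{u_0}\}$ of two tangency points or the single point of $C\cap\{X=0\}$ on $\ell$ with $I=4$, and in both cases it maps to a single point of $C/\sigma$ with $e_\rho=2$, while two distinct roots $u_1, u_2$ of $\psi$ give two distinct points of $C/\sigma$ (alternatively, the quotient model $\{u^2+G_2u+G_4=0\}$ in the weighted plane is itself smooth, since the quotient of a smooth curve is smooth and $C$ avoids the image of $P$).
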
 

\begin{proof}
Let $\sigma \in G[P]$ be the involution. 
The projection $\pi_P$ is the composite map of $g_P: C \rightarrow C/\sigma$ and $f_P:C/\sigma \rightarrow \mathbb P^1$. 
Since $g_P$ is ramified at exactly four points by Corollary \ref{fixed locus} and Fact \ref{Galois covering}(1), by Hurwitz formula, the genus of the smooth model of $C/\sigma$ is equal to $1$. 
Then $f_P: C/\sigma \rightarrow \mathbb P^1$ has exactly four ramification points. 
Therefore, we have (1). 
Assertion (2) is obvious, since $\pi_P$ is the composite map of double coverings $g_P$ and $f_P$. 
\end{proof}

We recall the notion of $G$-pairs and the following proposition (see Proposition \ref{pair 2} and Corollary \ref{pair 2'} in Section 3). 

\begin{proposition} \label{pair 3}
Let $(P, P')$ be a $G$-pair. 
Then there exists a linear transformation $\phi$ such that $\phi(P)=(1:0:0)$, $\phi(P')=(0:1:0)$, and $\phi(C)$ is given by 
$$ X^4+Y^4+Z^4+a X^2Y^2+b Y^2Z^2+c Z^2X^2=0, $$
where $a, b, c \in K$. 
In this case, there exists a quasi-Galois point $P''$ with $\phi(P'')=(0:0:1)$ such that $(P', P'')$ and $(P'', P)$ are $G$-pairs.  
In particular, $C \cap \overline{PP'}$ consists of exactly four points. 

Furthermore, if $P$ is a Galois point, then we can take $a=c=0$. 
\end{proposition}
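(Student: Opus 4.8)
The plan is to read off the normal form almost immediately from Section 3 and then to do the only two pieces of genuine work: showing that $\overline{PP'}$ meets $C$ in four distinct points, and handling the Galois case. Since $d=4$ we have $d/2=2$, and a $G$-pair here is understood with respect to $n=2$ (every quasi-Galois point of $\mathbb{P}^2\setminus C$ has $|G[\cdot]|\in\{2,4\}$, so $2$ always divides the relevant orders). First I would apply Corollary \ref{pair 2'} with $n=2$ to the given $G$-pair $(P,P')$; this produces a linear $\phi$ with $\phi(P)=(1:0:0)$, $\phi(P')=(0:1:0)$ and $\phi(C)$ of the stated shape $X^4+Y^4+Z^4+aX^2Y^2+bY^2Z^2+cZ^2X^2=0$. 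The point $P''$ with $\phi(P'')=(0:0:1)$ is exactly the one furnished by Proposition \ref{pair 2}. Because the defining polynomial is even in each of $X,Y,Z$ separately, the three involutions $\mathrm{diag}(-1,1,1)$, $\mathrm{diag}(1,-1,1)$, $\mathrm{diag}(1,1,-1)$ all preserve $\phi(C)$, with fixed lines $\{X=0\}$, $\{Y=0\}$, $\{Z=0\}$; reading off which base points these lines contain confirms directly that $(P,P'')$ and $(P',P'')$ are $G$-pairs.

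For the transversality assertion I would restrict the equation to $\overline{PP'}=\{Z=0\}$, obtaining the binary quartic $X^4+aX^2Y^2+Y^4$. Writing $v=X^2/Y^2$ this becomes $v^2+av+1$, whose two roots multiply to $1$ and so are both nonzero; hence $P$ and $P'$ lie off $C$, and the four values of $X/Y$ coincide only when the discriminant $a^2-4$ vanishes, i.e.\ when $a=\pm2$. The crucial step is to exclude $a=\pm2$ via smoothness: in that case the binary form is $(X^2\pm Y^2)^2$, and at any common zero of $X^2\pm Y^2$ and $Z$ a short inspection of $F_X,F_Y,F_Z$ shows that all three partials vanish (each carries a factor of $X^2\pm Y^2$ or of $Z$), so $\phi(C)$ would be singular there. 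This contradicts smoothness, whence $a\ne\pm2$ and $C\cap\overline{PP'}$ consists of four distinct points.

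For the final clause, suppose $P$ is a Galois point, so $|G[P]|=4$ and $G[P]$ contains an element $\rho$ of order four. The decisive observation is Corollary \ref{fixed locus}: the fixed locus $F[P]=\{P\}\cup\{X=0\}$ does not depend on the chosen nontrivial element, so $\rho$ fixes $(1:0:0)$ and fixes the line $\{X=0\}$ pointwise. A projective transformation with precisely this fixed set must be $\mathrm{diag}(\lambda,1,1)$, and since $\rho^2$ is the unique involution of $G[P]$, namely $\mathrm{diag}(-1,1,1)$, we get $\lambda^2=-1$, i.e.\ $\lambda$ a primitive fourth root of unity. Thus $\rho=\mathrm{diag}(\lambda,1,1)$ preserves $\phi(C)$; comparing $\rho^*F$ with $F$ and matching the coefficients of $X^4$, $X^2Y^2$ and $Z^2X^2$ forces $\mu=1$ and then $-a=a$, $-c=c$, that is $a=c=0$. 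Note that this needs no fresh change of coordinates, since $\rho$ already lives in the system fixed by the first part.

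I expect the main obstacle to be the transversality statement, as it is the only place where smoothness of $C$ enters essentially; everything else follows formally from Corollary \ref{pair 2'}, Proposition \ref{pair 2} and Corollary \ref{fixed locus} together with the symmetry of the normal form. A secondary point to keep honest is that the order-four element in the Galois case is forced, by the invariance of $F[P]$, to be diagonal in the coordinates already chosen, which is precisely what makes the conclusion $a=c=0$ immediate.
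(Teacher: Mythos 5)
Your proof is correct, and for the main assertions it takes the same route as the paper: the paper's entire argument for everything but the last clause is the single sentence that it follows from Proposition \ref{pair 2} and Corollary \ref{pair 2'}, which is exactly your first paragraph (your explicit check via the three diagonal involutions is just unwinding the proof of Proposition \ref{pair 2}). You genuinely add or deviate in two places. First, the claim that $C \cap \overline{PP'}$ has exactly four points: the paper leaves this implicit (it records $a \ne \pm 2$, $b \ne \pm 2$, $c \ne \pm 2$ from smoothness only in the running text after the proposition), whereas you supply the argument, correctly verifying that at a common zero of $X^2 \pm Y^2$ and $Z$, all three partials of $F$ vanish when $a = \pm 2$, so smoothness excludes the degenerate binary form $(X^2\pm Y^2)^2$. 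Second, the Galois clause: the paper reruns the coordinate change inside the proof of Proposition \ref{pair 2} with an order-four generator, taking $B^{-1}A_{\sigma_1}B=\mathrm{diag}(i,1,1)$ and reading off the form $X^4+Y^4+Z^4+bY^2Z^2$ directly; you instead keep the coordinates already fixed by the first part and use Corollary \ref{fixed locus} to force the order-four element of $G[P]$ to be $\mathrm{diag}(\lambda,1,1)$ with $\lambda^2=-1$ in those same coordinates (a projectivity fixing $\{X=0\}$ pointwise and fixing $(1:0:0)$ must be such a homology), then kill $a$ and $c$ by comparing coefficients of $\rho^*F$ with $F$. Both are valid; your version buys the explicit observation that no fresh normalization is needed, at the cost of the small rigidity lemma about homologies. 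The only blemish is cosmetic: the symbol $\mu$ in your coefficient comparison is never defined --- you mean the proportionality constant between $\rho^*F$ and $F$, which the coefficient of $Y^4$ (or of $X^4$, since $\lambda^4=1$) pins to $1$ before the relations $-a=a$ and $-c=c$ follow.
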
 

\begin{proof}
Assertions except for the last one are derived from Proposition \ref{pair 2} and Corollary \ref{pair 2'}. 
We consider the last assertion. 
Assume that $|G[P]|=4$. 
In the proof of Proposition \ref{pair 2}, we can take 
$$ B^{-1}A_{\sigma_1}B=\left(\begin{array}{ccc} i & 0 & 0 \\ 0 & 1 & 0 \\ 0 & 0 & 1 \end{array}\right), \ B^{-1}A_{\sigma_2}B=\left(\begin{array}{ccc} 1 & 0 & 0 \\ 0 & \zeta & 0 \\ 0 & 0 & 1 \end{array}\right), $$
where $i^2=-1$ and $\zeta^2=\pm 1$. 
Then the defining equation of the form 
$$ X^4+Y^4+Z^4+b Y^2Z^2=0$$
for some $b \in K$. 
\end{proof}

Let $\ell \subset \mathbb P^2$ be a projective line. 
We would like to calculate the number of quasi-Galois points on the line $\ell$.  
We treat the cases $\# C \cap \ell=4, 3, 2$ and $1$ separately. 

\begin{proposition} \label{four points} 
Let $\ell$ be a line with $\# C \cap \ell=4$. 
Then $\#\Delta' \cap \ell=0, 1, 2, 4$ or $6$.
Furthermore, if $\#\Delta' \cap \ell=2$ (resp., $4$, $6$), then we have exactly one (resp., two, three) $G$-pair. 
\end{proposition}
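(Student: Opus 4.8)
The plan is to analyze the action induced on the line $\ell$ itself. Write $C \cap \ell = \{R_1,R_2,R_3,R_4\}$ (four distinct points since the intersection is transverse), and set
$$ G := \{\sigma \in \mathrm{Aut}(C) \mid \sigma(\ell)=\ell\} \subset \mathrm{PGL}(3,K), \qquad \varphi\colon G \to \mathrm{Aut}(\ell)\cong \mathrm{PGL}(2,K),\quad \sigma\mapsto \sigma|_\ell. $$
For each $P\in\Delta'\cap\ell$ I would first record that the involution $\sigma_P\in G[P]$ lies in $G$: by Corollary \ref{fixed locus} it is a homology with center $P$ and axis the line $m_P:=F[P]\setminus\{P\}$, and since $P\in\ell$ is the center, the line $\ell$ is preserved. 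Its restriction $\iota_P:=\varphi(\sigma_P)$ is then a nontrivial involution of $\ell$ whose two fixed points are $P$ and $\ell\cap m_P$.

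The first key step is that $\iota_P$ permutes $\{R_1,\dots,R_4\}$ with no fixed point among them, hence is a double transposition. Indeed $\ell\cap m_P\notin C$: otherwise this point would be one of the tangency points $Q_j$ of Proposition \ref{tangent1}, whence $T_{Q_j}C=\overline{PQ_j}=\ell$ would be tangent to $C$, contradicting $\#(C\cap\ell)=4$. Thus each $P\in\Delta'\cap\ell$ selects one of the three pairings of $\{R_1,\dots,R_4\}$, and $\iota_P$ is the corresponding involution. These three involutions are exactly the non-identity elements of the Klein four-group $V_4$ inside the faithful image $\varphi(G)\hookrightarrow \mathrm{Sym}\{R_1,\dots,R_4\}=S_4$ (faithful because a Möbius map fixing the four points is the identity). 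Since any quasi-Galois point realizing a given pairing must be one of the two fixed points of that involution, at most two such points share a pairing, which yields $\#(\Delta'\cap\ell)\le 6$ immediately.

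Next I would identify ``sharing a pairing'' with ``forming a $G$-pair'' (with respect to $n=2$). If $P,P'\in\Delta'\cap\ell$ are distinct with $\iota_P=\iota_{P'}$, they are the two fixed points of this common involution, so $P'=\ell\cap m_P\in F[P]$, and Lemma \ref{pair 1} shows $(P,P')$ is a $G$-pair; the converse is the same computation read backwards. Consequently the number of $G$-pairs contained in $\Delta'\cap\ell$ equals the number of pairings realized by two quasi-Galois points, and each quasi-Galois point lies in at most one such pair.

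The crux, which I expect to be the main obstacle, is to exclude ``lonely'' points: a quasi-Galois point $P_1$ whose partner $\ell\cap m_{P_1}$ is \emph{not} quasi-Galois cannot coexist with any further quasi-Galois point on $\ell$. Suppose $P_2\in\Delta'\cap\ell$ is distinct from $P_1$. If $\iota_{P_2}=\iota_{P_1}$ then $P_2=\ell\cap m_{P_1}$ would be quasi-Galois, a contradiction; hence $\iota_{P_2}\ne\iota_{P_1}$. Then $P_3:=\sigma_{P_2}(P_1)=\iota_{P_2}(P_1)$ is again a quasi-Galois point on $\ell$, since $G[P_3]=\sigma_{P_2}G[P_1]\sigma_{P_2}^{-1}$ contains the involution $\sigma_{P_2}\sigma_{P_1}\sigma_{P_2}$, whose restriction is $\varphi(\sigma_{P_2}\sigma_{P_1}\sigma_{P_2})=\iota_{P_2}\iota_{P_1}\iota_{P_2}=\iota_{P_1}$ because $\iota_{P_1},\iota_{P_2}\in V_4$ commute. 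As $P_3\ne P_1$, the point $P_3$ must be the other fixed point $\ell\cap m_{P_1}$ of $\iota_{P_1}$ — which is thereby quasi-Galois, contradicting the choice of $P_1$. Therefore, once $\#(\Delta'\cap\ell)\ge 2$ every quasi-Galois point has a quasi-Galois partner, so the points split into $G$-pairs; with the bound of three pairings this forces $\#(\Delta'\cap\ell)\in\{2,4,6\}$ together with exactly $\#(\Delta'\cap\ell)/2$ $G$-pairs, while the remaining situations give $\#(\Delta'\cap\ell)\in\{0,1\}$, completing the count $0,1,2,4,6$. The delicate ingredient throughout is the $V_4$-interplay of the two involutions, namely that conjugating one pairing-involution by another returns the first, which is precisely what pins the stray fixed point down to be quasi-Galois.
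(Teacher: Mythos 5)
Your proposal is correct and takes essentially the same route as the paper's proof: both classify the restricted involutions by the three double transpositions of $C \cap \ell$ (giving at most two quasi-Galois points per type and the bound $6$), identify a shared type with a $G$-pair, and use the conjugate involution $\sigma_{P_2}\sigma_{P_1}\sigma_{P_2} \in G[\sigma_{P_2}(P_1)]$ to manufacture the missing partner and thereby exclude the counts $3$ and $5$ (your explicit appeal to the commutativity of $V_4$ under the faithful embedding $\varphi(G)\hookrightarrow S_4$ is the paper's direct computation $\sigma_1\sigma_2\sigma_1(Q_1)=Q_3$ in disguise). The one step you assert without justification, $P_3 \ne P_1$, is a one-line fill with tools you already use: if $\sigma_{P_2}(P_1)=P_1$ then Lemma \ref{pair 1} makes $(P_1,P_2)$ a $G$-pair, forcing $\iota_{P_1}$ and $\iota_{P_2}$ to share both fixed points and hence coincide, which you had already excluded.
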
 

\begin{proof}
Let $C \cap \ell=\{Q_1, Q_2, Q_3, Q_4\}$. 
We consider the possibilities of involutions acting on $C \cap \ell$. 
There are at most three types: 
\begin{itemize}
\item[(1)] $Q_1 \leftrightarrow Q_2$,\ $Q_3 \leftrightarrow Q_4$, 
\item[(2)] $Q_1 \leftrightarrow Q_3$,\ $Q_2 \leftrightarrow Q_4$, 
\item[(3)] $Q_1 \leftrightarrow Q_4$,\ $Q_2 \leftrightarrow Q_3$. 
\end{itemize}
If $P_1, P_2 \in \Delta' \cap \ell$, and involutions $\sigma_1 \in G[P_1]$ and $\sigma_2 \in G[P_2]$ are of type (1), then we have $\sigma_1|_\ell=\sigma_2|_\ell$. 
Then $\sigma_1(P_2)=\sigma_2(P_2)=P_2$ and $\sigma_2(P_1)=\sigma_1(P_1)=P_1$, i.e. $(P_1, P_2)$ is a $G$-pair. 
For each types (1)-(3) we have at most two quasi-Galois points, and hence, $\#\Delta' \cap \ell \le 6$.

Let $\sigma_1 \in G[P_1]$ and $\sigma_2 \in G[P_2]$ give involutions of types (1) and (2) respectively. 
Then $\sigma_1\sigma_2\sigma_1(Q_1)=Q_3$, and hence, $\sigma_1\sigma_2\sigma_1$ is of type (2). 
Since $\sigma_1(P_2) \ne P_2$ and $\sigma_1(P_2)$ is quasi-Galois, $(P_2, \sigma_1(P_2))$ is a $G$-pair. 
Similarly, $(P_1, \sigma_2(P_1))$ is a $G$-pair. 
We have two $G$-pairs. 

Assume that $\#\Delta'\cap\ell \ge 5$. 
There are at least two $G$-pairs. 
We can assume that $(P_1, P_2)$ and $(P_3, P_4)$ are $G$-pairs, and give involutions on $\ell$ of type (1) and (2) respectively. 
Let $P_5$ be another quasi-Galois point, and let $\sigma_i \in G[P_i]$ be the involution. 
Then $\sigma_1(P_5) \ne P_5$ and the involution $\sigma_1\sigma_5\sigma_1 \in G[\sigma_1(P_5)]$ gives an involution on $\ell$ of type (3). 
Therefore, $\sigma_1(P_5) \ne P_1, \ldots, P_5$. 
We have $\#\Delta'\cap\ell=6$.
\end{proof}

Hereafter, we consider the curve $C$ defined by 
$$ F=X^4+Y^4+Z^4+a X^2Y^2+b Y^2Z^2+c Z^2X^2=0. $$
Then $P=(1:0:0)$, $P'=(0:1:0)$, $P''=(0:0:1) \in \Delta'$. 
The lines $F[P]\setminus \{P\}$, $F[P']\setminus \{P'\}$ and $F[P''] \setminus \{P''\}$ are defined by $X=0$, $Y=0$ and $Z=0$ respectively. 
Since $C$ is smooth, we have $a \ne \pm 2$, $b \ne \pm 2$ and $c \ne \pm 2$.

\begin{proposition} \label{four-six points 1}
We have the following. 
\begin{itemize}
\item[(1)] If there exist two $G$-pairs on the line defined by $Z=0$, then $b=\pm c$. 
Furthermore, when $c=-b$, we take the linear transformation given by $X \mapsto iX$, where $i^2=-1$, so that we have the defining equation with $c=b$. 
\item[(2)] If there exist three $G$-pairs on the line defined by $Z=0$, then $b=c=0$. 
\end{itemize} 
\end{proposition}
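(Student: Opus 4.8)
The plan is to combine the classification of the three types of involutions in Proposition~\ref{four points} with an explicit description of $C \cap \ell$ and of the involution attached to each $G$-pair, where $\ell=\{Z=0\}$.

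First I would describe the four points of $C\cap\ell$. Setting $Z=0$ in $F$, these are the points $(u:1:0)$ with $u^4+au^2+1=0$; putting $t=u^2$, the two roots $t_1,t_2$ of $t^2+at+1=0$ satisfy $t_1t_2=1$, so after fixing square roots with $\sqrt{t_1}\sqrt{t_2}=1$ the four points are $(\pm\sqrt{t_1}:1:0)$ and $(\pm\sqrt{t_2}:1:0)$. The pair $(P,P')$ is a $G$-pair whose involutions restrict on $\ell$ to $(X:Y)\mapsto(-X:Y)$, i.e.\ $u\mapsto -u$; this is the type~(1) involution, pairing $\pm\sqrt{t_1}$ and $\pm\sqrt{t_2}$. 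Since by Proposition~\ref{four points} each type carries at most one $G$-pair, any further $G$-pair on $\ell$ is of type~(2) or~(3).

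Next I would pin down the involution of such an extra $G$-pair. The type~(2) involution restricts on $\ell$ to the unique involution swapping $\sqrt{t_1}\leftrightarrow\sqrt{t_2}$ and $-\sqrt{t_1}\leftrightarrow-\sqrt{t_2}$, namely $u\mapsto 1/u$, with fixed points $(1:\pm1:0)$; the type~(3) involution restricts to $u\mapsto-1/u$, with fixed points $(\pm i:1:0)$. The two points $R_1,R_2$ of the $G$-pair are exactly the fixed points of the restricted involution (its center $R_1$ and the point $R_2=\ell\cap(F[R_1]\setminus\{R_1\})$), so the $G$-pair is $\{(1:1:0),(1:-1:0)\}$ in case~(2) and $\{(i:1:0),(-i:1:0)\}$ in case~(3). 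In either case the involution $\mathrm{diag}(1,1,-1)\in G[P'']$ fixes $R_1$, as $R_1\in\{Z=0\}$, so Lemma~\ref{pair 1} shows that $(R_1,P'')$ is a $G$-pair and hence $P''\in F[R_1]$. As $R_2\in F[R_1]$ too, the fixed line $F[R_1]\setminus\{R_1\}$ of the generator $\sigma_1\in G[R_1]$ (Corollary~\ref{fixed locus}) equals $\overline{R_2P''}$; since an involution of $\mathbb{P}^2$ fixing a point together with a line off it pointwise is unique, $\sigma_1$ is determined, and one computes $(X:Y:Z)\mapsto(Y:X:-Z)$ in case~(2) and $(X:Y:Z)\mapsto(-iY:iX:Z)$ in case~(3).

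Finally I would impose $\sigma_1\in{\rm Aut}(C)$, that is, $\sigma_1^*F$ proportional to $F$. A direct substitution shows that case~(2) forces $b=c$ and case~(3) forces $b=-c$, which proves~(1): a second $G$-pair gives $b=\pm c$, and when $c=-b$ the substitution $(X:Y:Z)\mapsto(iX:Y:Z)$ fixes $P,P',P''$ and the line $Z=0$ while changing $(a,b,c)$ into $(-a,b,-c)=(-a,b,b)$, so we may assume $c=b$. Assertion~(2) is then immediate: three $G$-pairs means both type~(2) and type~(3) occur, so $b=c$ and $b=-c$ hold together, whence $b=c=0$. The main obstacle is the middle step, namely identifying the extra involution precisely; the device that makes it routine is to use $P''$ and Lemma~\ref{pair 1} to locate the fixed axis, after which the matrix of $\sigma_1$ and the resulting relation between $b$ and $c$ follow by inspection.
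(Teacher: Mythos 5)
Your proposal is correct, but it takes a genuinely different route from the paper's. The paper never locates the extra $G$-pair: given pairs $(P_1,P_1')$ and $(P_2,P_2')$ on $Z=0$, it forms the composite $\sigma_1\sigma_2$ of one involution from each pair, observes (via the type analysis in the proof of Proposition \ref{four points}) that this composite swaps each pair and fixes $(0:0:1)$, writes down its matrix --- antidiagonal in the two-pair case, $\mathrm{diag}(-\eta,\eta,1)$ with $\eta^2\neq 1$ in the three-pair case --- and compares coefficients of $F$; in particular assertion (2) gets its own separate matrix computation. You instead pin down the second pair itself: writing $C\cap\{Z=0\}$ as $(\pm\sqrt{t_1}:1:0)$, $(\pm\sqrt{t_2}:1:0)$ with $t_1t_2=1$, you identify the restricted involution of any further $G$-pair as $u\mapsto \pm 1/u$, hence the pair as $\{(1:\pm1:0)\}$ or $\{(\pm i:1:0)\}$, and then reconstruct the full planar involution from its isolated fixed point and its axis $\overline{R_2P''}$ --- located via $\mathrm{diag}(1,1,-1)\in G[P'']$ together with Lemma \ref{pair 1}, plus the (correct) uniqueness of a planar involution with prescribed center and pointwise-fixed line off it. Both arguments end in the same kind of coefficient comparison, and both exploit the point $(0:0:1)$, but your route buys the explicit coordinates of the extra quasi-Galois points, which dovetails exactly with the converse direction recorded in Proposition \ref{four-six points 2}, and it makes assertion (2) an immediate corollary of the two cases of (1), since types (2) and (3) must then occur simultaneously, forcing $b=c$ and $b=-c$ at once; the paper's product trick is shorter and avoids solving for the intersection points altogether. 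The only step you should make explicit is that the four points of $C\cap\{Z=0\}$ are pairwise distinct, which follows from smoothness ($a\neq\pm2$, as noted just before the proposition, equivalently $t_1\neq t_2$); this is what legitimizes both the three-type classification and the claim that each pairing of the four points is realized by a unique M\"obius involution.
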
 

\begin{proof} 
Assume that $(P_1, P_1')$ and $(P_2, P_2')$ are two $G$-pairs on the line $Z=0$. 
Then the point $P_1''=(0:0:1)$ is contained in $F[P_1] \cap F[P_2]$. 
Let $\sigma_1\in G[P_1]$ and $\sigma_2 \in G[P_2]$ be involutions. 
Then $\sigma_1\sigma_2$ satisfies 
$$ P_1 \leftrightarrow P_1',\ P_2 \leftrightarrow P_2' $$
(see the second paragraph of the proof of Proposition \ref{four points}). 
Since $\sigma_1\sigma_2(F[P_1])=F[P_1']$, we have $\sigma_1\sigma_2(P_1'')=P_1''$. 
Then $\sigma_1\sigma_2$ is represented by the matrix 
$$ \left(\begin{array}{ccc} 0 & \lambda & 0 \\ 
\mu & 0 & 0 \\
0 & 0 & 1 
\end{array}\right) $$
for some $\lambda, \mu \in K$. 
Then $((\sigma_1\sigma_2)^{-1})^*F$ and $F$ are the same up to a constant. 
Therefore, we have  
$$ \lambda^4Y^4+\mu^4X^4+Z^4+a\lambda^2\mu^2X^2Y^2+b\mu^2X^2Z^2+c\lambda^2Y^2Z^2=F. $$
Considering the coefficients of $Y^4$ and $Y^2Z^2$, we have $\lambda^2=\pm 1$ and $b=\pm c$. 

Assume that $(P_1, P_1')$, $(P_2, P_2')$ and $(P_3, P_3')$ are three $G$-pairs on the line $Z=0$. 
Let $\sigma_2\in G[P_2]$ and $\sigma_3 \in G[P_3]$ be involutions. 
Then $\sigma_2\sigma_3$ satisfies 
$$ P_1 \rightarrow P_1,\ P_1' \rightarrow P_1', \ P_2 \leftrightarrow P_2',\ P_3 \leftrightarrow P_3' $$
(see the second paragraph of the proof of Proposition \ref{four points}). 
Since $\sigma_2\sigma_3(P_1)=P_1$ and $\sigma_2\sigma_3(P_1')=P_1'$, we have $\sigma_2\sigma_3(P_1'')=P_1''$. 
Note that the order of $\sigma_2\sigma_3$ is at least $3$ and the order of the restriction $(\sigma_2\sigma_3)|_{\{Z=0\}}$ on the line $Z=0$ is two.  
Then $\sigma_2\sigma_3$ is represented by the matrix 
$$ \left(\begin{array}{ccc} -\eta & 0 & 0 \\ 
0 & \eta & 0 \\
0 & 0 & 1 
\end{array}\right),$$
where $\eta^2 \ne 1$. 
Then $((\sigma_2\sigma_3)^{-1})^*F$ and $F$ are the same up to a constant. 
Therefore, we have  
$$ \eta^4X^4+\eta^4Y^4+Z^4+a\eta^4X^2Y^2+b \eta^2Y^2Z^2+c\eta^2Z^2X^2=F. $$
Considering the coefficients of $Y^2Z^2$ and $Z^2X^2$, we have $b=c=0$. 
\end{proof}

On the contrary, we have the following. 

\begin{proposition} \label{four-six points 2} 
Let $a, b \in K$, and let $C$ be the smooth plane curve given by 
$$ X^4+Y^4+Z^4+aX^2Y^2+bY^2Z^2+bZ^2X^2=0. $$
Then we have the following. 
\begin{itemize}
\item[(1)] Points $(1:0:0)$, $(0:1:0)$, $(1:1:0)$ and $(1:-1:0)$ are quasi-Galois points. Furthermore, if $b \ne 0$, they are not Galois. 
\item[(2)] If $b=0$, then points $(\pm i:1:0)$ are quasi-Galois, where $i^2=-1$. 
Furthermore, we have the following. 
\begin{itemize} 
\item{} If $a \ne 0$, then points $(1:0:0)$ and $(0:1:0)$ are not Galois. 
\item{} If $a \ne 6$, then points $(\pm 1:1:0)$ are not Galois.
\item{} If $a \ne -6$, then points $(\pm i:1:0)$ are not Galois. 
\item{} If $a =0$ or $\pm 6$, then there exists a linear transformation $\phi$ such that $\phi(\{Z=0\})=\{Z=0\}$ and $\phi(C)$ is the Fermat curve $X^4+Y^4+Z^4=0$. 
\end{itemize} 
\item[(3)] If $b=0$, then $\delta'[2]=6$ or $12$. 
Furthermore, $\delta'[2]=12$ if and only if $C$ is projectively equivalent to the Fermat curve $X^4+Y^4+Z^4=0$. 
\end{itemize} 
\end{proposition}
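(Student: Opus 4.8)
The plan is to treat the whole statement through reflections. A point $P\in\mathbb{P}^2\setminus C$ is quasi-Galois exactly when $\mathrm{Aut}(C)$ contains an involution whose fixed locus is $\{P\}$ together with a line (the axis $F[P]\setminus\{P\}$), and such a $P$ is Galois iff $G[P]$ contains an element of order $4$. By Corollary \ref{fixed locus} every nontrivial element of $G[P]$ has the \emph{same} fixed locus, so in coordinates with $P=(1:0:0)$ and axis $\{X=0\}$ an order-$4$ element is forced to be $\mathrm{diag}(\zeta,1,1)$ with $\zeta$ a primitive fourth root of unity. Thus the Galois test reduces, once and for all, to asking whether $\mathrm{diag}(i,1,1)$ preserves $C$ in coordinates adapted to $P$. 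I record this reduction and reuse it everywhere.

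For (1) I would exhibit the four reflections explicitly: $\mathrm{diag}(-1,1,1)$ and $\mathrm{diag}(1,-1,1)$, centered at $(1:0:0)$ and $(0:1:0)$, preserve $F$ since $F$ is even in each variable; the swap $(X:Y:Z)\mapsto(Y:X:Z)$ is an involution centered at $(1:-1:0)$ and $(X:Y:Z)\mapsto(-Y:-X:Z)$ an involution centered at $(1:1:0)$, both preserving $F$ by the $X\leftrightarrow Y$ symmetry. Each center lies off $C$ (e.g. $F(1,1,0)=2+a\neq0$ by smoothness, as $a\neq\pm2$), so the four points are quasi-Galois. For the non-Galois claim when $b\neq0$ I apply the test: at $(1:0:0)$ one has $F(iX,Y,Z)\propto F$ only if $a=b=0$; at $(1:\pm1:0)$ I pass to $U=X\pm Y,\ V=X\mp Y,\ W=Z$, where the reflection becomes $\mathrm{diag}(-1,1,1)$ and the transformed quartic is $(2+a)U^4+(2+a)V^4+W^4+(12-2a)U^2V^2+2bU^2W^2+2bV^2W^2$, whose invariance under $U\mapsto iU$ forces $12-2a=0$ and $b=0$. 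In every case $b\neq0$ excludes an order-$4$ element. Part (2) is the same package with $b=0$, augmented by the two reflections $(X:Y:Z)\mapsto(\mp iY:\pm iX:Z)$ centered at $(\pm i:1:0)$, which preserve $F=X^4+Y^4+Z^4+aX^2Y^2$; the test then yields the thresholds $a=0$ for $(1:0:0),(0:1:0)$, $a=6$ for $(\pm1:1:0)$, and (after $U=X+iY,\ V=X-iY,\ W=Z$, giving a $U^2V^2$-coefficient $12+2a$) $a=-6$ for $(\pm i:1:0)$. For the last clause, when $a\in\{0,6,-6\}$ the $U^2V^2$-term disappears and the transformed quartic becomes a scalar multiple of a diagonal quartic; rescaling the (diagonal) coordinates, which fixes $\{Z=0\}=\{W=0\}$, produces the Fermat equation.

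For (3) the real content is an upper bound, and I would anchor it on two facts: $(0:0:1)$ is a Galois point for every $b=0$ curve, since $F=g(X,Y)+Z^4$ is preserved by $\mathrm{diag}(1,1,i)$; and $\{Z=0\}$ carries exactly six quasi-Galois points, namely those of (1)--(2), because $\{Z=0\}$ is a $4$-secant (smoothness gives $a\neq\pm2$) and Proposition \ref{four points} caps the count at $6$. I then split on $\delta'[4]$. If $\delta'[4]\ge2$, Theorem \ref{outer}(3) produces a $G$-pair of Galois points, and the last assertion of Proposition \ref{pair 3} forces $C$ into the shape $X^4+Y^4+Z^4+\beta Y^2Z^2$ with \emph{both} pair-vertices Galois, which occurs only for $\beta=0$; hence $C$ is projectively Fermat and a direct count gives $\delta'[2]=12$. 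If $\delta'[4]=1$, then $(0:0:1)$ is the unique Galois point, so $\mathrm{Aut}(C)$ fixes it and therefore fixes its axis $\{Z=0\}$; every automorphism is then block-diagonal, with a $2\times2$ block on $(X,Y)$ and a scalar on $Z$. Every eigenvector of such a matrix lies on $\{Z=0\}$ or equals $(0:0:1)$, so a reflection of this shape has its center on $\{Z=0\}$ or at $(0:0:1)$; hence every order-$2$ quasi-Galois point lies on $\{Z=0\}$, and since none of the six there is Galois (uniqueness of $(0:0:1)$), we get $\delta'[2]=6$. Therefore $\delta'[2]\in\{6,12\}$, with $12$ occurring exactly when $\delta'[4]\ge2$, i.e. exactly when $C$ is projectively Fermat.

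The step I expect to be most delicate is the $\delta'[4]=1$ localization — confirming, via the block-diagonal form, that no order-$2$ quasi-Galois point can leave $\{Z=0\}$ — together with the clean Fermat count $\delta'[2]=12$, which is load-bearing in the backward direction of the characterization. For the Fermat count I would either enumerate the reflections of $\mathrm{Aut}(C)$ directly, or, more in the spirit of the paper, use the order-$4$ automorphism at each of the three coordinate Galois points to generate length-$4$ orbits and invoke the gap $5\notin\{0,1,2,4,6\}$ of Proposition \ref{four points} to pin every order-$2$ point onto a coordinate line, yielding exactly $4\times3=12$.
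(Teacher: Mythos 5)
Your parts (1) and (2) are, in substance, the paper's own argument: the same substitutions $\tilde X=\frac12(X\pm Y)$ (resp.\ $\frac12(X\pm iY)$) producing the transformed quartics with coefficients $2\pm a$, $12\mp 2a$ and $2b$, the same Galois test via an order-four element ${\rm diag}(\zeta,1,1)$ in adapted coordinates (the paper phrases this through Fact \ref{standard form} rather than through explicit reflections, but that is cosmetic), and the same thresholds $a=0,6,-6$. For (3), however, you take a genuinely different route. The paper assumes $\delta'[2]>6$, picks a quasi-Galois point $R\notin\{Z=0\}\cup\{Q\}$ with involution $\tau$, and conjugates the Galois point $Q=(0:0:1)$: either $\tau(Q)=Q$, in which case Lemma \ref{pair 1} makes $(R,Q)$ a $G$-pair and forces $R\in F[Q]\setminus\{Q\}=\{Z=0\}$, a contradiction, or $\tau(Q)\ne Q$ is a second Galois point, whereupon Yoshihara's theorem gives the Fermat curve and the known count $\delta'[2]=12$ is cited from \cite{miura-yoshihara, fmt}. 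You instead split on $\delta'[4]$: in the uniqueness case you let ${\rm Aut}(C)$ fix $Q$ and hence its axis, put every automorphism in block-diagonal form, and localize all reflection centers onto $\{Z=0\}\cup\{Q\}$, getting $\delta'[2]=6$ directly; in the case $\delta'[4]\ge 2$ you reach Fermat through Theorem \ref{outer}(3) and the last clause of Proposition \ref{pair 3} instead of through Yoshihara's theorem. Both routes are sound; yours buys self-containedness (no appeal to \cite{yoshihara}, and, if you enumerate the reflections of the monomial automorphism group of the Fermat quartic, no appeal to the external count either), while the paper's conjugation trick is shorter because it outsources exactly those two facts.

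Two local caveats, neither fatal. First, your sentence ``every eigenvector of such a matrix lies on $\{Z=0\}$ or equals $(0:0:1)$'' is literally false when the $2\times 2$ block shares an eigenvalue with the $Z$-scalar (the repeated eigenspace then contains eigenvectors off both loci); what you actually need, and what is true, is that the \emph{isolated} fixed point of a reflection in block-diagonal form is either the $(-1)$-eigendirection inside $\{Z=0\}$ or the point $(0:0:1)$ itself, so the localization of order-two quasi-Galois points survives. Second, your second alternative for the Fermat count does not work as sketched: the length-four orbit of a putative off-axis point $R$ under ${\rm diag}(i,1,1)$ lies on the line joining $(1:0:0)$ to $R$, giving at least five quasi-Galois points there, and Proposition \ref{four points} then forces the count to be exactly six rather than producing a contradiction, so nothing pins $R$ to a coordinate line without further work. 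Since you offered it only as an alternative, rely on the direct enumeration (or cite \cite{miura-yoshihara, fmt} as the paper does) for $\delta'[2]=12$.
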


\begin{proof}
We consider points $(\pm 1:1:0)$.  
We set 
$$ \tilde{X}=\frac{1}{2}(X+Y),\ \tilde{Y}=\frac{1}{2}(X-Y),\ \tilde{Z}=Z$$
and take the linear transformation $\phi:(X:Y:Z) \mapsto (\tilde{X}:\tilde{Y}:\tilde{Z})$.
Then $\phi^{-1}((1:1:0))=(1:0:0)$, $\phi^{-1}((-1:1:0))=(0:1:0)$, and $\phi^{-1}(C)$ is given by 
$$ G=(2+a)X^4+(2+a)Y^4+Z^4+(12-2a)X^2Y^2+2bY^2Z^2+2bX^2Z^2=0. $$
By Theorem \ref{standard form}, $\phi^{-1}((\pm 1:1:0))$ are quasi-Galois. 
Therefore, $(\pm1:0:0)$ are quasi-Galois. 
Furthermore, if $\phi^{-1}((1:1:0))$ is Galois, then the matrix
$$ \left(\begin{array}{ccc} i & 0 & 0 \\
0 & 1 & 0 \\
0 & 0 & 1 \end{array}\right)$$
acts on $G$. 
This implies $12-2a=0$ and $2b=0$. 

Let $b=0$. 
We consider points $(\pm i:1:0)$.  
We set 
$$ \tilde{X}=\frac{1}{2}(X+iY),\ \tilde{Y}=\frac{1}{2}(X-iY),\ \tilde{Z}=Z$$
and take the linear transformation $\phi:(X:Y:Z) \mapsto (\tilde{X}:\tilde{Y}:\tilde{Z})$.
Then $\phi^{-1}((i:1:0))=(1:0:0)$, $\phi^{-1}((-i:1:0))=(0:1:0)$, and $\phi^{-1}(C)$ is given by 
$$ H=(2-a)X^4+(2-a)Y^4+Z^4+(12+2a)X^2Y^2=0. $$
By Theorem \ref{standard form}, $\phi^{-1}((\pm i:1:0))$ are quasi-Galois. 
Therefore, $(\pm i:0:0)$ are quasi-Galois. 
Furthermore, if $a \ne -6$, then $(\pm i:0:0)$ are not Galois. 

We prove (3). 
Now, we have six quasi-Galois points on the line $Z=0$. 
By the defining equation, we infer that $Q=(0:0:1)$ is an outer Galois point and the set $F[Q]\setminus \{Q\}$ is given by $Z=0$. 
Assume that $\delta'[2]>6$. 
Then there exists a quasi-Galois point $R \in \mathbb P^2 \setminus (\{Z=0\} \cup \{Q\})$. 
Let $\tau \in G[R]$ be the involution. 
If $\tau(Q)=Q$, then by Lemma \ref{pair 1}, $(R, Q)$ is a $G$-pair. 
Then $R$ must lie on the line $Z=0$. 
This is a contradiction. 
If $\tau(Q) \ne Q$ is not a $G$-pair, then we have two Galois points. 
It follows from a theorem of Yoshihara \cite{yoshihara} that $C$ is projectively equivalent to the Fermat curve. 
In this case, it is known that $\delta'[2]=12$ (\cite{miura-yoshihara, fmt}).  
\end{proof} 

\begin{corollary} \label{Galois point exist}
If $\delta'[\ge 2] \ge 2$ and $\delta'[4] \ge 1$, then there exists a line $\ell$ such that $\#\Delta' \cap \ell=6$. 
\end{corollary}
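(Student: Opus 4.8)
The plan is to manufacture a $G$-pair with respect to $n=2$ one of whose members is a Galois point, and then to recognize the resulting normal form as an instance of Proposition \ref{four-six points 2} with vanishing parameter, where a $6$-point line has already been exhibited. Throughout, note that for $P\in\mathbb{P}^2\setminus C$ one has $\deg\pi_P=4$, so by the remark following Fact \ref{index} the order $|G[P]|$ divides $4$; hence $|G[P]|\in\{2,4\}$, $\Delta'=\Delta'_2\cup\Delta'_4$, and $\delta'[4]\ge1$ supplies a point with $|G[P]|=4$, i.e. a Galois point. I would split the argument on the size of $\Delta'_4$.

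If $\delta'[4]\ge2$, choose distinct $P_1,P_2\in\Delta'_4$ and let $\eta\in G[P_1]$ be the involution. If $\eta(P_2)=P_2$ then $P_2\in F[P_1]\setminus\{P_1\}$, and Lemma \ref{pair 1} (applied with $n=2$ and the generator of $G[P_1]$) shows that $(P_1,P_2)$ is a $G$-pair; if $\eta(P_2)\ne P_2$, then Lemma \ref{n=4} shows that $(P_2,\eta(P_2))$ is a $G$-pair. Since $\eta(P_2)$ is conjugate to $P_2$, in either event I obtain a $G$-pair both of whose members have order $4$.

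If instead $\delta'[4]=1$, let $P$ be the unique point of $\Delta'_4$ and pick any $R\in\Delta'\setminus\{P\}$; this exists because $\delta'[\ge2]\ge2$, and necessarily $|G[R]|=2$. The crucial point is that $\mathrm{Aut}(C)$ permutes $\Delta'_4$ (if $\phi\in\mathrm{Aut}(C)$ then $G[\phi(Q)]=\phi G[Q]\phi^{-1}$, and $\phi$ preserves $\mathbb{P}^2\setminus C$), so it must fix the unique element $P$. In particular the generator $\tau$ of $G[R]$ satisfies $\tau(P)=P$, and Lemma \ref{pair 1} (with $n=2$) then yields that $(R,P)$ is a $G$-pair. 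Thus in all cases I have a $G$-pair, one of whose members $P_0$ is Galois; note that since the fixed loci are independent of the chosen nonidentity element (Corollary \ref{fixed locus}), this pair is in particular a $G$-pair with respect to $2$, so Proposition \ref{pair 3} applies.

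It remains to extract the line. Applying Proposition \ref{pair 3} to this $G$-pair, after a linear transformation sending the Galois member $P_0$ to $(1:0:0)$ the curve becomes $X^4+Y^4+Z^4+aX^2Y^2+bY^2Z^2+cZ^2X^2=0$, and since $P_0$ is Galois the last clause of Proposition \ref{pair 3} lets me take $a=c=0$, so $C$ is projectively $X^4+Y^4+Z^4+bY^2Z^2=0$. Relabelling by $(X:Y:Z)\mapsto(Y:Z:X)$ turns this into $X^4+Y^4+Z^4+bX^2Y^2=0$, which is precisely the curve of Proposition \ref{four-six points 2} with its parameter equal to $0$; by part (3) there are six quasi-Galois points on the line $Z=0$. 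That line meets the smooth quartic in four distinct points (the equation $t^2+bt+1=0$ for $t=(X/Y)^2$ has two distinct nonzero roots since $b\ne\pm2$), so Proposition \ref{four points} gives $\#\Delta'\cap\{Z=0\}\le6$ and hence equality; transporting back through the coordinate changes produces a line $\ell$ with $\#\Delta'\cap\ell=6$. The one step that really requires care is the middle one, forcing a member of a $G$-pair to be Galois, which is why the argument bifurcates on $\delta'[4]$: the fixed-point argument is available only when $\Delta'_4$ is a singleton, while Lemmas \ref{n=4} and \ref{pair 1} cover the case of two Galois points. Once the $G$-pair with a Galois member is in hand, the normal forms make the remainder routine.
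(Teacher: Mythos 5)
Your proof is correct, and it deviates from the paper in one branch. The paper disposes of the case of several Galois points by citing Yoshihara's classification (\cite[Theorem 4' and Proposition 5']{yoshihara}: $\delta'[4]=1$ or $3$, with $\delta'[4]=3$ forcing the Fermat curve, for which Proposition \ref{four-six points 2} already exhibits the $6$-point line), and then treats only $\delta'[4]=1$. You instead handle $\delta'[4]\ge 2$ internally: given two Galois points $P_1,P_2$, either the involution $\eta\in G[P_1]$ fixes $P_2$, so $P_2\in F[P_1]$ and Lemma \ref{pair 1} (via Corollary \ref{fixed locus}, since $\eta(P_2)=P_2$ forces the generator of $G[P_1]$ to fix $P_2$) yields a $G$-pair, or $\eta(P_2)\ne P_2$ and Lemma \ref{n=4} yields the $G$-pair $(P_2,\eta(P_2))$ of two Galois points. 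This is a genuine trade: the paper's route is shorter but leans on an external classification theorem, while yours stays within the toolkit already developed in Section 3 and never needs to identify the Fermat curve. In the remaining case $\delta'[4]=1$ your argument coincides with the paper's: ${\rm Aut}(C)$ fixes the unique Galois point $P$ (the paper compresses this to ``$F[R]\ni P$''), Lemma \ref{pair 1} gives the $G$-pair $(R,P)$, the last clause of Proposition \ref{pair 3} puts $C$ in the form $X^4+Y^4+Z^4+bY^2Z^2=0$, and Proposition \ref{four-six points 2} supplies the line. Two small remarks: the six quasi-Galois points on the line come from parts (1) and (2) of Proposition \ref{four-six points 2}, not part (3), which is a statement about $\delta'[2]$ globally; and your explicit check that the line meets $C$ in four distinct points (so that Proposition \ref{four points} caps $\#\Delta'\cap\ell$ at $6$) is a detail the paper leaves implicit, though it also follows from the last sentence of Proposition \ref{pair 3}.
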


\begin{proof} 
If $\delta'[4] \ge 1$, then it follows from a theorem of Yoshihara \cite[Theorem 4' and Proposition 5']{yoshihara} that $\delta'[4]=1$ or $3$, and $\delta'[4]=3$ implies that $C$ is the Fermat curve. 
For the Fermat curve, the required line exists, by Proposition \ref{four-six points 2}. 
We can assume that $\delta'[4]=1$.  
Let $P$ be a Galois point and let $R$ be a quasi-Galois point. 
Since $\delta'[4]=1$, then $F[R] \ni P$, that is, $(R, P)$ is a $G$-pair. 
By Proposition \ref{pair 3}, the defining equation is of the form 
$$ X^4+Y^4+Z^4+b Y^2Z^2=0$$
for some $b \in K$. 
By Proposition \ref{four-six points 2}, the line defined by $X=0$ is the required line. 
\end{proof}

We consider the case where $C \cap \ell$ consists of three points. 

\begin{proposition} \label{three points}
If $\# C \cap \ell=3$, then $\#\Delta'\cap\ell=0$ or $1$. 
\end{proposition}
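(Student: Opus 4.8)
The plan is to show that $\ell$ carries at most one quasi-Galois point, by proving that any two of them must coincide. First I set up the intersection data: since $\deg C = 4$ and $\# C \cap \ell = 3$, the intersection multiplicities of $C$ and $\ell$ are $2, 1, 1$; write $C \cap \ell = \{Q_1, Q_2, Q_3\}$ with $I_{Q_1}(C,\ell) = 2$ and $I_{Q_2}(C,\ell) = I_{Q_3}(C,\ell) = 1$. For any $P \in \Delta' \cap \ell$ the projection $\pi_P$ has degree $4$ and $\overline{PQ_j} = \ell$ for each $j$, so by Fact \ref{index}(2) the ramification indices are $e_{Q_1} = 2$ and $e_{Q_2} = e_{Q_3} = 1$; thus $\{Q_1, Q_2, Q_3\}$ is exactly the fiber of $\pi_P$ over the point of $\mathbb{P}^1$ corresponding to $\ell$.

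Next, assume for contradiction that $P_1, P_2 \in \Delta' \cap \ell$ are distinct, and let $\sigma_i \in G[P_i]$ be the involution (which exists because $G[P_i]$ is cyclic of order $2$ or $4$). Since $\sigma_i$ preserves the fibers of $\pi_{P_i}$ and $P_i \in \ell$, the automorphism $\sigma_i$ maps $C \cap \ell$ to itself; as these are three collinear points, $\sigma_i(\ell) = \ell$ and $\sigma_i$ restricts to an involution $\sigma_i|_\ell$ of $\ell \cong \mathbb{P}^1$. The heart of the argument is to determine this restriction. If $\sigma_i(Q_j) = Q_j$, then $Q_j$ lies in the fixed locus of $\sigma_i$, so by Corollary \ref{fixed locus} and Fact \ref{Galois covering}(1) the double cover $g_{P_i} : C \to C/\sigma_i$ ramifies at $Q_j$ with index $2$; writing $\pi_{P_i} = f_{P_i} \circ g_{P_i}$ as in Lemma \ref{four cover}, this forces $e_{Q_j}(\pi_{P_i}) \ge 2$. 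Since among $Q_1, Q_2, Q_3$ only $Q_1$ has ramification index $\ge 2$, the involution $\sigma_i$ can fix neither $Q_2$ nor $Q_3$. An involution of the three-element set $\{Q_1, Q_2, Q_3\}$ that fixes neither $Q_2$ nor $Q_3$ must be the transposition fixing $Q_1$ and interchanging $Q_2$ and $Q_3$.

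Finally I conclude by rigidity on $\mathbb{P}^1$. Both $\sigma_1|_\ell$ and $\sigma_2|_\ell$ fix $Q_1$ and interchange $Q_2$ and $Q_3$, hence agree on the three distinct points $Q_1, Q_2, Q_3$ and therefore coincide as automorphisms of $\ell$. This common involution of $\mathbb{P}^1$ has exactly two fixed points: $Q_1$ and one further point $R_0$. Each $P_i$ is fixed by $\sigma_i|_\ell$ and satisfies $P_i \ne Q_1$ (because $P_i \notin C$), so $P_i = R_0$ for $i = 1, 2$; thus $P_1 = P_2$, a contradiction. Hence $\#\Delta' \cap \ell \le 1$, that is, it equals $0$ or $1$.

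I expect the step identifying the $\sigma_i$-action on $C \cap \ell$ to be the main point, specifically the fact that the fixed intersection point is forced to be the tangency point $Q_1$. This rests on the inequality $e_{Q_j}(\pi_{P_i}) \ge e_{Q_j}(g_{P_i}) = 2$ at any $\sigma_i$-fixed $Q_j$ together with the $(2,1,1)$ ramification profile; everything afterwards is the standard fact that an automorphism of $\mathbb{P}^1$ is determined by its values at three points.
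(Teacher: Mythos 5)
Your proof is correct, and it shares its key observation with the paper's proof but finishes differently. The heart of both arguments is that each involution $\sigma_i \in G[P_i]$ must fix the tangency point $Q_1$ of the $(2,1,1)$-fiber $C \cap \ell$: the paper asserts this tersely in the form $Q_1 \in C \cap F[P_1] \cap F[P_2]$, while you justify it carefully via the multiplicativity $e_Q(\pi_{P_i}) = e_Q(g_{P_i})\, e_{g_{P_i}(Q)}(f_{P_i})$ and the ramification profile; note there is an even quicker route, since $\sigma_i$ is linear with $\sigma_i(C)=C$ and $\sigma_i(\ell)=\ell$, hence preserves $I_Q(C,\ell)$ and must fix the unique point of multiplicity two. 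Where you genuinely diverge is the endgame: the paper, having placed $Q_1$ in $C \cap F[P_1] \cap F[P_2]$, concludes in one line by Proposition \ref{two quasi-Galois}(2), which forbids a point of $C$ from lying in $F[P_1] \cap F[P_2]$ for distinct $P_1, P_2 \in \mathbb{P}^2 \setminus C$; you instead re-derive the contradiction by hand, showing $\sigma_1|_\ell = \sigma_2|_\ell$ by three-point rigidity in ${\rm PGL}(2, K)$ and then locating both $P_1$ and $P_2$ at the unique fixed point of this involution other than $Q_1$ (using $P_i \notin C$), forcing $P_1 = P_2$. Your version is longer but self-contained: it bypasses the group-theoretic machinery behind Proposition \ref{two quasi-Galois} (Fact \ref{total-cyclic} and the order-$n^2$ cyclic-group argument) in favor of an elementary fixed-point count on $\mathbb{P}^1$, whereas the paper's citation is shorter and reuses a proposition already established for the general theory.
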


\begin{proof}
Let $C \cap \ell=\{Q_1, Q_2, Q_3\}$, let $T_{Q_1}C=\ell$, and let $P_1, P_2 \in \ell$ be different quasi-Galois points. 
Then $Q_1 \in C \cap F[P_1] \cap F[P_2]$. 
This is a contradiction to Proposition \ref{two quasi-Galois}. 
\end{proof}

We consider the case where $C \cap \ell$ consists of two points.

\begin{proposition} \label{two points} 
Let $C \cap \ell=\{Q_1, Q_2\}$, where $Q_1 \ne Q_2$. 
\begin{itemize} 
\item[(1)] If $I_{Q_1}(C, \ell)=3$, then $\#\Delta'\cap\ell=0$. 
\item[(2)] If $T_{Q_1}C=T_{Q_2}C=\ell$, then $\#\Delta'\cap\ell=0, 1$ or $3$.
\item[(3)] If $\#\Delta' \cap \ell=3$, then there exists an automorphism $\sigma \in {\rm Aut}(C)$ of order three such that the fixed locus of $\sigma$ coincides with the set $\{Q_1, Q_2, R\}$, where $R$ is the point given by $R \in F[P]$ for any $P \in \Delta' \cap \ell$. 
\end{itemize}
\end{proposition}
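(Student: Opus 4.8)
The plan is to settle (1) at once and to obtain (2) and (3) from an analysis of the finite group of automorphisms preserving $\ell$, together with a single coordinate computation that exploits the double tangency. For (1), a point $P\in\Delta'\cap\ell$ would be an outer quasi-Galois point lying on a line $\ell\ni P$ with $I_{Q_1}(C,\ell)=3$, which is exactly excluded by Lemma~\ref{four cover}(2); hence $\#\Delta'\cap\ell=0$.

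For (2), I first pin down the involution $\sigma_P\in G[P]$ attached to each $P\in\Delta'\cap\ell$. As $\sigma_P$ preserves the fibre of $\pi_P$ over $\ell$, which is $C\cap\ell=\{Q_1,Q_2\}$, it either fixes or interchanges $Q_1,Q_2$; fixing both would put the two distinct points $Q_1,Q_2$ on the axis $F[P]\setminus\{P\}$, forcing that axis to equal $\ell$, which is impossible since $P\in\ell$ but $P\notin F[P]\setminus\{P\}$ by Corollary~\ref{fixed locus}. So every $\sigma_P$ swaps $Q_1\leftrightarrow Q_2$, and $\varphi(\sigma_P)$ is a nontrivial involution, where $\varphi\colon G\to{\rm Aut}(\ell)\cong{\rm PGL}(2,K)$ is the restriction homomorphism on $G=\{\sigma\in{\rm Aut}(C):\sigma(\ell)=\ell\}$. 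Every element of $G$ preserves $C\cap\ell=\{Q_1,Q_2\}$, so the finite group $\varphi(G)$ stabilizes a two-point set and is cyclic or dihedral by Fact~\ref{PGL}; containing a swap, it is a dihedral group $D_k$ whose reflections are precisely the involutions interchanging $Q_1,Q_2$. Moreover $G$ permutes $\Delta'\cap\ell$ (conjugates of quasi-Galois points that remain on $\ell$), the assignment $P\mapsto\varphi(\sigma_P)$ is $\varphi(G)$-equivariant, and it is injective: two distinct points of $\Delta'\cap\ell$ with the same reflection would each be fixed by the other's involution, hence would form a $G$-pair by Lemma~\ref{pair 1}, forcing $\#(C\cap\ell)=4$ by Proposition~\ref{pair 3} and contradicting $\#(C\cap\ell)=2$.

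The decisive step is to bound $k$, and I expect this to be the main obstacle. I would choose coordinates with $\ell=\{Z=0\}$, $Q_1=(1:0:0)$, $Q_2=(0:1:0)$, so that the bitangency forces $F=\alpha X^2Y^2+Z\,G(X,Y,Z)$ with $\alpha\neq0$ and, by smoothness at $Q_1$ and $Q_2$, with the coefficients $G(1,0,0)$ and $G(0,1,0)$ nonzero. A rotation of $\varphi(G)$ lifts to an automorphism fixing $Q_1$ and $Q_2$, hence is represented by ${\rm diag}(\zeta,1,\mu)$; imposing invariance of $F$ and comparing the monomials $X^2Y^2$, $X^3Z$ and $Y^3Z$ yields $\mu=\zeta^2$ and $\zeta^3=1$. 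Thus every rotation has order $1$ or $3$, so $k\in\{1,3\}$. Since the reflections of $D_3$ form a single conjugacy class and $D_1$ has a single reflection, the injective $\varphi(G)$-invariant image of $\Delta'\cap\ell$ has cardinality $0$, $1$ (when $k=1$) or $3$ (when $k=3$); this gives $\#\Delta'\cap\ell\in\{0,1,3\}$. The entire trichotomy rests on the observation that preserving the bitangent normal form $\alpha X^2Y^2+ZG$ forces the rotation eigenvalue to be a cube root of unity, and it is here that $\deg C=4$ and the double tangency are essential.

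For (3), suppose $\#\Delta'\cap\ell=3$, so $k=3$ and the order-three rotation is $\sigma={\rm diag}(\zeta,1,\zeta^2)$ with $\zeta^3=1$, $\zeta\neq1$. Having three distinct eigenvalues, $\sigma$ has fixed locus in $\mathbb{P}^2$ equal to $\{Q_1,Q_2,R\}$ with $R=(0:0:1)$. Finally, each reflection of this $D_3$ fixes $R$: indeed $\sigma(R)=R$, the three reflections are $\sigma$-conjugate, and one reflection interchanging $Q_1,Q_2$ fixes the remaining coordinate direction $R$. Since $R\notin\ell$ while the reflection's centre $P$ lies on $\ell$, the fixed point $R$ must lie on the axis $F[P]\setminus\{P\}$; hence $R\in F[P]$ for every $P\in\Delta'\cap\ell$, and $\sigma$ is the required order-three automorphism. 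The equivariance and $G$-pair bookkeeping is routine once the bound on $k$ is in hand.
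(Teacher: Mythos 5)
Your proposal is correct in substance and implements the core step genuinely differently from the paper. The paper also begins by showing two points of $\Delta'\cap\ell$ cannot form a $G$-pair (via Proposition~\ref{pair 3}) and then studies the product $\sigma_1\sigma_2$ of two involutions; but it pins down its order synthetically: the paper takes $R$ to be the intersection of the two axes $F[P_1]\setminus\{P_1\}$ and $F[P_2]\setminus\{P_2\}$ (so $\sigma_1\sigma_2(R)=R$ is immediate), shows $R\notin C$, and argues that the restrictions of $\sigma_1\sigma_2$ to the lines $\overline{Q_iR}$ are trivial or of order three by examining their action on $C\cap\overline{Q_iR}$, using in addition that $Q_1,Q_2$ are not inner Galois points to force $\mathrm{diag}(\zeta,\zeta^2,1)$. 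You instead exploit the bitangent normal form $F=\alpha X^2Y^2+ZG$ with the smoothness-forced nonvanishing coefficients of $X^3Z$ and $Y^3Z$, and a coefficient comparison gives $\mu=\zeta^2$, $\zeta^3=1$ at once; this is arguably cleaner, since it avoids both the case analysis on $C\cap\overline{Q_iR}$ and the appeal to non-Galoisness. For the upper bound $\#\Delta'\cap\ell\le 3$ the paper runs an explicit fourth-point contradiction ($\sigma_1\sigma_4$ must coincide with $\sigma_1\sigma_2$ or its square on $\mathbb{P}^2$, violating Corollary~\ref{two groups}), whereas you package the same information into the injective, conjugation-equivariant map $P\mapsto\varphi(\sigma_P)$ into the reflections of a dihedral group with rotation subgroup of order $1$ or $3$; both are sound, and your dihedral bookkeeping also explains why $\#\Delta'\cap\ell=2$ is impossible in one stroke.

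Two steps need patching, though neither threatens the architecture. First, ``a rotation lifts to an automorphism fixing $Q_1$ and $Q_2$, hence is represented by $\mathrm{diag}(\zeta,1,\mu)$'' is not literally true: such a lift is only block-triangular, $\left(\begin{smallmatrix} a & 0 & p\\ 0 & b & q\\ 0 & 0 & c\end{smallmatrix}\right)$; you must add that it has finite order, hence is diagonalizable, and that its third fixed point can be moved to $(0:0:1)$ by a transformation fixing $Q_1$, $Q_2$ and $\ell$, which preserves the normal form $\alpha X^2Y^2+ZG$. Second, and more seriously, your justification in (3) that some reflection fixes $R$ --- ``one reflection interchanging $Q_1,Q_2$ fixes the remaining coordinate direction $R$'' --- is false as stated: for any $p$, the involution $\left(\begin{smallmatrix}0&1&p\\1&0&p\\0&0&-1\end{smallmatrix}\right)$ swaps $Q_1,Q_2$ and preserves $\ell$ but fixes $(0:0:1)$ only when $p=0$. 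The claim you need is nevertheless a one-liner inside your own framework: take the lift of the rotation to be $\sigma:=\sigma_{P_1}\sigma_{P_2}$; since the $\sigma_{P_i}$ are involutions, $\sigma_{P_1}\sigma\sigma_{P_1}=\sigma_{P_2}\sigma_{P_1}=\sigma^{-1}$, so $\sigma_{P_1}$ permutes $\mathrm{Fix}(\sigma)=\{Q_1,Q_2,R\}$ and, swapping $Q_1,Q_2$, must fix $R$; your $\sigma$-conjugacy argument then carries $R\in F[P]$ to the remaining point of $\Delta'\cap\ell$. (This is exactly why the paper defines $R$ as $F[P_1]\cap F[P_2]$ from the outset, making its invariance tautological.) With these two repairs your proof is complete.
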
 

\begin{proof}
Assertion (1) is derived from Lemma \ref{four cover}(2). 
We consider assertion (2). 
Let $P_1, P_2 \in \ell$ be quasi-Galois points, and let $\sigma_i \in G[P_i]$ be the involution. 
By Proposition \ref{pair 3}, $(P_1, P_2)$ is not a $G$-pair. 
By Lemma \ref{pair 1}, $\sigma_1(P_2) \ne P_2$, and hence, $\#\Delta'\cap\ell\ge 3$. 
We consider $\sigma_1\sigma_2$. 
Then $\sigma_1\sigma_2(Q_1)=Q_1$ and $\sigma_1\sigma_2(Q_2)=Q_2$. 
Let $R$ be the intersection point of the lines $F[P_1]\setminus \{P_1\}$ and $F[P_2]\setminus \{P_2\}$. 
Then $\sigma_1\sigma_2(R)=\sigma_1(R)=R$.  
If $R \in C$, then $T_RC \ni P_1, P_2$. 
Therefore, $R \not\in C$. 
For a suitable system of coordinates, we can assume that $Q_1=(1:0:0), Q_2=(0:1:0)$ and $R=(0:0:1)$. 
Consider the action of $\sigma_1\sigma_2$ on the lines $\overline{Q_1R}$ and $\overline{Q_2R}$. 
Since $\sigma_1\sigma_2$ fixes $Q_1, Q_2$ and $R$, $\sigma_1\sigma_2|_{\overline{Q_iR}}$ is identity if $\sigma_1\sigma_2$ fixes some point of $C \cap \overline{Q_iR}$ other than $Q_i$.
Therefore, the restriction $\sigma_1\sigma_2|_{\overline{Q_iR}}$ is identity or of order three for $i=1, 2$. 
Then $\sigma_1\sigma_2$ is represented by the matrix 
$$ A_{\sigma_1\sigma_2}=\left(\begin{array}{ccc} \zeta & 0 & 0 \\ 0 & \eta & 0 \\ 0 & 0 &1 \end{array} \right), $$
where $\zeta$ and $\eta$ are cubic roots of $1$. 
Since $Q_1$ and $Q_2$ are not inner Galois (by Facts \ref{index} and \ref{Galois covering}(2)) and $R \not\in C$, we have $\eta \ne 1$, $\zeta \ne 1$ and $\zeta \ne \eta$. 
This implies that $\eta=\zeta^2$.

Let $P_3:=\sigma_1\sigma_2(P_2)$ ($=\sigma_1(P_2)$). 
Since $\sigma_1\sigma_2(F[P_2])=F[P_3]$, $R \in F[P_3]$. 
Assume by contradiction that $\#\Delta'\cap\ell \ge 4$. 
Let $P_4 \ne P_1, P_2, P_3$ be quasi-Galois, and let $\sigma_4 \in G[P_4]$ be the involution. 
Since $\sigma_1\sigma_4(Q_i)=Q_i$ for $i=1, 2$ and the order of $\sigma_1\sigma_4$ is three, we have $\sigma_1\sigma_4|_{\ell}=\sigma_1\sigma_2|_{\ell}$ or $(\sigma_1\sigma_2)^2|_{\ell}$. 
Then we find that $\sigma_1\sigma_4(R)=R$, and hence, $\sigma_1\sigma_4=\sigma_1\sigma_2$ or $(\sigma_1\sigma_2)^2$ on $\mathbb P^2$. 
We have $\sigma_4=\sigma_2 \in G[P_2]$ or $\sigma_4=\sigma_2\sigma_1\sigma_2 \in G[P_3]$. 
This is a contradiction. 

The condition as in assertion (3) is satisfied for the automorphism $\sigma_1\sigma_2$. 
\end{proof}

We consider the case where $C \cap \ell$ consists of a unique point.

\begin{proposition} \label{one point} 
If $\# C \cap \ell=1$, then $\#\Delta'\cap\ell=0$ or $1$. 
\end{proposition}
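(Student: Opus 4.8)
The plan is to reduce to Proposition \ref{two quasi-Galois}(2), in the same spirit as the proof of Proposition \ref{three points}. Since $\deg C = 4$ and $\# C\cap\ell = 1$, B\'ezout's theorem forces $I_Q(C,\ell)=4$ at the unique point $Q \in C\cap\ell$; in particular $\ell = T_QC$. I would show that every quasi-Galois point $P \in \Delta'\cap\ell$ forces $Q$ to lie on the fixed line $F[P]\setminus\{P\}$, so that the existence of two distinct such points would place $Q$ in $C\cap F[P_1]\cap F[P_2]$, which is impossible.

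The heart of the argument is a ramification count. Fix $P \in \Delta'\cap\ell$, so that $P \in \mathbb{P}^2\setminus C$ and $n := |G[P]| \in \{2,4\}$. Because $P$ lies on $\ell = \overline{PQ}$, Fact \ref{index}(2) gives $e_Q = I_Q(C,\overline{PQ}) = I_Q(C,\ell) = 4$, which equals $\deg \pi_P$; thus $\pi_P$ is totally ramified at $Q$. Factor $\pi_P$ as the composite of $\theta: C \rightarrow C/G[P]$, which is Galois of degree $n$, and $f: C/G[P] \rightarrow \mathbb{P}^1$, of degree $4/n$. By multiplicativity of ramification indices, $e_Q$ is the product of the ramification index of $\theta$ at $Q$ and that of $f$ at $\theta(Q)$; bounding these factors by $n$ and $4/n$ respectively yields $e_Q \le 4$, with equality forcing the ramification index of $\theta$ at $Q$ to equal $n$. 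By Fact \ref{Galois covering}(1) the stabilizer $G[P](Q)$ then has order $n = |G[P]|$, so $G[P]$ fixes $Q$; Corollary \ref{fixed locus} gives $Q \in F[P]$, and since $Q \in C$ while $P \notin C$ we conclude $Q \in F[P]\setminus\{P\}$.

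With this established the conclusion is immediate: if $P_1 \ne P_2$ both lay in $\Delta'\cap\ell$, then $Q \in C \cap F[P_1]\cap F[P_2]$ with $P_1, P_2 \in \mathbb{P}^2\setminus C$, contradicting Proposition \ref{two quasi-Galois}(2). Hence $\#\Delta'\cap\ell = 0$ or $1$. The only delicate point is the ramification bookkeeping in the middle paragraph, which must be carried out so as to cover both $n=2$ and $n=4$ uniformly; once the total fixing of $Q$ by $G[P]$ is in place, the remainder is formal.
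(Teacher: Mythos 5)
Your proof is correct and follows the same route as the paper: both arguments reduce the statement to showing $Q \in C \cap F[P_1] \cap F[P_2]$ for the unique point $Q \in C \cap \ell$, which contradicts Proposition \ref{two quasi-Galois}(2). The only difference is in the middle step: the paper treats $Q \in F[P_i]$ as immediate --- since $C \cap \ell = \{Q\}$ and $P_i \notin C$, the set-theoretic fiber of $\pi_{P_i}$ through $Q$ is the singleton $\{Q\}$, so every element of $G[P_i]$ fixes $Q$ outright --- whereas your ramification bookkeeping through $C \rightarrow C/G[P] \rightarrow \mathbb{P}^1$, forcing the stabilizer to have order $n$ via Fact \ref{Galois covering}(1), reaches the same conclusion by a longer but valid computation.
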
 

\begin{proof}
Let $C \cap \ell=\{Q\}$, and let $P_1, P_2$ be different quasi-Galois points. 
Then $Q \in F[P_1] \cap F[P_2]$. 
This is a contradiction to Proposition \ref{two quasi-Galois}. 
\end{proof}

Here, we assume that there does not exist a line $\ell$ such that $\#\Delta' \cap \ell=6$.
This condition is equivalent to the one that there does not exist a Galois point, under the assumption that $\delta'[\ge 2]\ge 2$, by Proposition \ref{four-six points 2} and Corollary \ref{Galois point exist}.   
We introduce the notion of ``$G$-triple'' and ``$G$-triangle'' here. 
We call a triple $(P, P', P'')$ a $G$-triple, if each two of points $P, P', P''$ form a $G$-pair. 
We call the set $\overline{P P'} \cup \overline{P' P''} \cup \overline{P'' P} \subset \mathbb{P}^2$ a $G$-triangle via the triple $(P, P', P'')$. 

\begin{lemma} \label{not in triangle}
Let $(P, P', P'')$ be a $G$-triple. 
Assume that $R$ is a quasi-Galois point not in the $G$-triangle $\overline{PP'}\cup \overline{P' P''} \cup \overline{P'' P}$. 
Then one of three lines $\overline{R P}$, $\overline{RP'}$ and $\overline{R P''}$ is not a multiple tangent line, that is, one of them contains at least three points of $C$.
\end{lemma}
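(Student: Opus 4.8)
We are given a $G$-triple $(P,P',P'')$, so each pair forms a $G$-pair. By Proposition~\ref{pair 3} we may place $P=(1:0:0)$, $P'=(0:1:0)$, $P''=(0:0:1)$, and then $C$ is defined by $X^4+Y^4+Z^4+aX^2Y^2+bY^2Z^2+cZ^2X^2=0$ for some $a,b,c\in K$. The $G$-triangle is the union of the three coordinate lines $\{X=0\}\cup\{Y=0\}\cup\{Z=0\}$. We are given a quasi-Galois point $R$ lying on none of these three lines, hence all three coordinates of $R$ are nonzero; write $R=(r_1:r_2:r_3)$ with $r_1r_2r_3\ne 0$. We must show that at least one of the three lines $\overline{RP}$, $\overline{RP'}$, $\overline{RP''}$ meets $C$ in at least three distinct points.

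**Strategy: contradiction via Lemma~\ref{four cover}(1).** The plan is to argue by contradiction: suppose all three lines $\overline{RP}$, $\overline{RP'}$, $\overline{RP''}$ are multiple tangent lines, meaning each meets $C$ in at most two points with the tangent line equal to the line at each such point. Since $R\in\Delta'$, Lemma~\ref{four cover}(1) says there are \emph{exactly four} lines through $R$ of this special ``double-cover-ramification'' type, and Lemma~\ref{four cover}(2) forbids any line through $R$ from meeting $C$ with intersection multiplicity $3$ at a single point. The key observation is that the three lines $\overline{RP}$, $\overline{RP'}$, $\overline{RP''}$ are exactly the lines joining $R$ to the three vertices of the $G$-triangle, and because $P,P',P''$ are themselves quasi-Galois points whose fixed lines are the opposite coordinate lines, each of these three join-lines is distinguished. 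The hard part will be converting the hypothesis ``$\overline{RP}$ is a multiple tangent line'' into explicit algebraic constraints on the coefficients $a,b,c$ and on $R$, and showing these constraints are jointly incompatible with $R$ being off the triangle.

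**Executing the computation.** Concretely, I would examine each join-line in the explicit coordinates. For instance, the line $\overline{RP''}$ passes through $P''=(0:0:1)$ and $R=(r_1:r_2:r_3)$; parametrizing it and substituting into $F$ gives a quartic whose structure (in particular whether it is a perfect square of a quadratic, which is precisely the condition that the line is a multiple tangent with $C\cap\ell$ of size $\le 2$ and tangent everywhere) forces relations among $a,b,c,r_1,r_2$. Repeating for $\overline{RP}$ and $\overline{RP'}$ yields a symmetric system. The main obstacle I anticipate is the bookkeeping: one must show that the simultaneous ``perfect-square quartic'' conditions for all three join-lines force at least one coordinate of $R$ to vanish (contradicting $R$ off the triangle) or force $a,b,c$ into a degenerate configuration making $C$ singular (contradicting smoothness, i.e. $a,b,c\ne\pm2$).

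**Alternative cleaner route.** A more structural alternative, which I would prefer if it succeeds, is to use the involutions directly. Let $\sigma\in G[P]$, $\sigma'\in G[P']$, $\sigma''\in G[P'']$ be the involutions; in the standard coordinates these are the three diagonal sign-change involutions, e.g. $\sigma=\mathrm{diag}(1,-1,-1)$ up to scalar, and similarly for the others (so that together with the identity they generate the Klein four-group $V$ acting on $C$). Let $\tau\in G[R]$ be the involution at $R$. If all three join-lines were multiple tangent lines, then $R$ together with the $V$-action would produce too many of the four special lines guaranteed by Lemma~\ref{four cover}(1): the four ramification lines through $R$ must be permuted by the stabilizer structure, and one checks that the three join-lines $\overline{RP},\overline{RP'},\overline{RP''}$ being simultaneously of ramification type would, under the forced symmetry, exceed the count four or collide with the coordinate lines. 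The main obstacle here is justifying that these three join-lines are genuinely distinct special lines and relating them correctly to the four lines of Lemma~\ref{four cover}(1); I expect the sign-involution action to pin this down, and this is the step I would develop most carefully.
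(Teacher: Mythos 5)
Your main route coincides with the paper's: normalize the $G$-triple to the coordinate triangle via Proposition~\ref{pair 3}, assume all three join-lines $\overline{RP}$, $\overline{RP'}$, $\overline{RP''}$ are multiple tangents, translate each hypothesis into the condition that the restricted binary quartic is a perfect square (you identify this correctly), and derive a contradiction with smoothness. However, your write-up stops exactly where the proof begins: you never derive the three square/discriminant conditions and never show they are jointly inconsistent, deferring this as ``bookkeeping.'' That elimination \emph{is} the lemma. The paper makes it tractable by a normalization you are missing: since $R$ lies on none of the three lines, $(P,P',P'',R)$ is a projective frame, so one may take $R=(1:1:1)$; the three conditions then become the symmetric system $(a+c)^2-4(b+2)=0$, $(a+b)^2-4(c+2)=0$, $(b+c)^2-4(a+2)=0$, whose pairwise differences factor, e.g.\ as $(b-c)(2a+b+c+4)=0$, and a short case analysis closes the argument using smoothness ($a,b,c\ne\pm 2$) \emph{and} the condition $F(1,1,1)=3+a+b+c\ne 0$, which encodes $R\notin C$ and which your sketch never invokes (it is needed, e.g., to dispose of the solution $a=b=c=-1$ in the symmetric case). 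With $R=(r_1:r_2:r_3)$ kept general, as in your plan, the system involves five unknowns, and you give no indication of how the elimination would terminate; so as it stands the proposal is a correct plan with the decisive computation absent.

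Your ``cleaner alternative'' would not work as sketched, and the obstacle you flag is fatal rather than technical. Lemma~\ref{four cover}(1) bounds the number of multiple tangent lines \emph{through the fixed point} $R$ by four; three join-lines do not exceed four, so there is no contradiction by counting. The Klein four-group $\{1,\sigma,\sigma',\sigma''\}$ cannot replenish the count: since $R$ lies off all three fixed lines, every nontrivial element of this group moves $R$, so the image of $\overline{RP}$ under, say, $\sigma'$ is a line through $\sigma'(R)\ne R$ and contributes nothing to the collection of special lines through $R$. Thus the fallback route needs a genuinely new idea, and the burden of proof falls back on the computational route, which you did not carry out.
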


\begin{proof}
For a suitable system, we can assume that $P=(1:0:0)$, $P'=(0:1:0)$, $P''=(0:0:1)$ and $R=(1:1:1)$. 
Then the defining equation of $C$ is of the form 
$$ F=X^4+Y^4+Z^4+a X^2Y^2+b Y^2Z^2+c Z^2X^2=0, $$
where $a, b, c \in K$ and $F(1, 1, 1)=3+a+b+c \ne 0$. 
Assume that the three lines are multiple tangent lines. 
By the condition that the line $\overline{RP}$ is a multiple tangent line, it follows that
$$ D_1(1, 1)=(a+c)^2-4(b+2)=0, $$
where $D_1(Y, Z)$ is the discriminant 
$$ (aY^2+cZ^2)^2-4(Y^4+bY^2Z^2+Z^4). $$
By the symmetry, we have the equations
$$ D_2=(a+b)^2-4(c+2)=0, \mbox{ and } D_3=(b+c)^2-4(a+2)=0. $$
By the relation $D_1-D_2=0$, 
$$ (b-c)(2a+b+c+4)=0. $$
Similarly, 
$$ (a-b)(a+b+2c+4)=0, \mbox{ and } (c-a)(a+2b+c+4)=0. $$

Assume that $a=b=c$. 
Then $3a+3 \ne 0$ and $(2a)^2-4(a+2)=0$. 
This implies that $a=2$. 
This is a contradiction to the smoothness. 

We can assume that $a \ne b$. 
Then $a+b+2c+4=0$. 
If $b=c$, then $a=-3c-4$ and $(-2c-4)^2-4(c+2)=0$. 
Then $c=-2$ or $a=b=c=-1$. 
The former is a contradiction to the smoothness, and the latter is a contradiction to $a \ne b$. 
Therefore, $b \ne c$. 
Then 
$$ 2a+b+c+4=a+b+2c+4=0,  $$
and $a=c$. 
Therefore, $b=-3c-4$ and $(-2c-4)^2-4(c+2)=0$. 
Then $c=-2$ or $c=-1$. 
The former is a contradiction to the smoothness, and the latter is a contradiction to $b \ne c$. 
\end{proof}

\begin{proposition} \label{triple}
Assume that there exists a $G$-triple $(P, P', P'')$, and $\delta'[4]=0$. 
Then 
$$ \delta'[2]=3, 5, 9 \mbox{ or } 21. $$
Furthermore, the following hold. 
\begin{itemize}
\item[(1)] $\delta'[2]=21$ if and only if $C$ is projectively equivalent to the curve defined by 
$$X^4+Y^4+Z^4+a(X^2Y^2+Y^2Z^2+Z^2X^2)=0, $$
where $a \in K$ satisfies $a^2+3a+18=0$. 
\item[(2)] $\delta'[2]=9$ if and only if $C$ is projectively equivalent to the curve defined by 
$$ X^4+Y^4+Z^4+a(X^2Y^2+Y^2Z^2+Z^2X^2)=0, $$
where $a \in K \setminus \{0, -1\}$ and $a^2+3a+18 \ne 0$. 
\item[(3)] $\delta'[2]=5$ if and only if $C$ is projectively equivalent to the curve defined by 
$$ X^4+Y^4+Z^4+aX^2Y^2+b(Y^2Z^2+Z^2X^2)=0, $$
where $a, b \in K$, $b \ne 0$ and $b \ne \pm a$.  
\end{itemize} 
\end{proposition}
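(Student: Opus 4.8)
The plan is to fix coordinates adapted to the given $G$-triple and then count quasi-Galois points side-by-side on the coordinate triangle and off it. By Proposition \ref{pair 3} applied to $(P,P',P'')$, I may assume $P=(1:0:0)$, $P'=(0:1:0)$, $P''=(0:0:1)$ and
$$ F = X^4+Y^4+Z^4+aX^2Y^2+bY^2Z^2+cZ^2X^2, $$
so that the three sides of the $G$-triangle are the coordinate lines and each meets $C$ in four points. The three vertices are automatically quasi-Galois, contributing $3$ points to $\Delta'$.

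First I would determine $\#\Delta'\cap\{XYZ=0\}$. Each side carries the $G$-pair coming from its two vertices, so by Proposition \ref{four points} and the standing hypothesis that no line carries six points, each side meets $\Delta'$ in either $2$ or $4$ points. By Proposition \ref{four-six points 1}(1) (and its cyclic analogues for $X=0$ and $Y=0$), a side carries four points exactly when the two coefficients of the monomials divisible by the square of the variable defining that side agree up to sign; explicitly $\{Z=0\}$ gives $b^2=c^2$, $\{X=0\}$ gives $c^2=a^2$, and $\{Y=0\}$ gives $a^2=b^2$. The key structural observation is that these three conditions are transitively closed: if two hold then so does the third. Hence the number $k$ of four-point sides lies in $\{0,1,3\}$, never $2$, and after normalising signs by the substitutions $X\mapsto iX$ etc. (using Proposition \ref{four-six points 1} and excluding the parameters $a\in\{0,-1\}$ that are ruled out by $\delta'[4]=0$ and the no-six-point-line hypothesis, as each carries a Galois point), the three cases are: $k=0$ with $a^2,b^2,c^2$ pairwise distinct, giving on-triangle count $3$; $k=1$ normalised to $b=c$ with $b\ne 0$ and $b\ne\pm a$, giving $5$, which is exactly the normal form of (3); and $k=3$ normalised to $a=b=c$ with $a\ne 0,-1$, giving $9$, the normal form of (1)--(2).

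Next I would show that off-triangle quasi-Galois points can occur only in the fully symmetric case. Suppose $R\notin\{XYZ=0\}$ is quasi-Galois. By Lemma \ref{not in triangle} one cevian, say $\overline{RP}$, meets $C$ in at least three points; it cannot meet $C$ in exactly three, for then $\overline{RP}$ would carry the two quasi-Galois points $P$ and $R$, contradicting Proposition \ref{three points}, so $\#C\cap\overline{RP}=4$. On this four-point line $P$ and $R$ are quasi-Galois but do not form a $G$-pair (a $G$-pair with $P$ would force $R$ into the fixed line $\{X=0\}$, hence onto the triangle), so by Proposition \ref{four points} the line must carry four quasi-Galois points and two $G$-pairs, and $R$ lies in one of them. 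By Proposition \ref{pair 3}, $R$ is therefore a vertex of a second $G$-triangle, i.e. $C$ admits a linear involution $\sigma_R$ fixing $R$ that is not a diagonal vertex-involution. Imposing $\sigma_R^*F=\lambda F$ for such an involution yields polynomial relations in $a,b,c$ and the coordinates of $R$; the plan is to solve these and show that a solution with $R$ off the triangle forces $a=b=c$. Combined with the previous paragraph, this already gives $\delta'[2]\in\{3,5,9\}$ whenever $a,b,c$ are not all equal, settling case (3) and the non-special part of (1)--(2).

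It remains to treat $a=b=c$ and count the off-triangle points precisely; this is the main obstacle. Here the group $H=\langle\sigma_P,\sigma_{P'}\rangle\cong(\mathbb{Z}/2\mathbb{Z})^{2}$ of vertex-involutions acts freely on the off-triangle locus (an orbit being $\{R,\sigma_PR,\sigma_{P'}R,\sigma_{P''}R\}$ of size $4$), so the off-triangle contribution is a multiple of $4$. Solving the involution equations $\sigma_R^*F=\lambda F$ of the previous step, specialised to $a=b=c$, I expect the existence of an off-triangle vertex to be governed by a single quadratic condition on $a$, which a direct computation should identify as $a^2+3a+18=0$: when it holds the off-triangle quasi-Galois points form three $H$-orbits, giving exactly $12$ further points and $\delta'[2]=9+12=21$ (the Klein-quartic parameter, with its $21$ involutions), and when it fails there are none, giving $\delta'[2]=9$. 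Finally I would verify the converse in each case by exhibiting the required involutions on the stated normal forms (as in Proposition \ref{four-six points 2}), so that each listed equation genuinely realises the asserted value of $\delta'[2]$. The hardest and most computational step is the explicit determination, for $a=b=c$, of all linear involutions fixing an off-triangle point, from which both the discriminant-type condition $a^2+3a+18=0$ and the exact count $12=3\times 4$ must be read off.
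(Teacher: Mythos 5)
Your framework is sound and in outline coincides with the paper's: the normalization via Proposition \ref{pair 3}, the observation that each edge of the $G$-triangle carries $2$ or $4$ points of $\Delta'$ with the coefficient conditions from Proposition \ref{four-six points 1} (sufficiency from Proposition \ref{four-six points 2}) forcing the number of four-point edges into $\{0,1,3\}$ and yielding the on-triangle counts $3$, $5$, $9$, and the fact that off-triangle points come in free $\langle\sigma_P,\sigma_{P'}\rangle$-orbits of size $4$. But there is a genuine gap at exactly the two decisive steps, both of which you explicitly defer (``the plan is to solve these\dots'', ``I expect\dots a direct computation should identify\dots''). First, you never prove that an off-triangle quasi-Galois point forces $a=b=c$: solving $\sigma_R^*F=\lambda F$ over all linear involutions fixing an unknown off-triangle point is a multi-parameter elimination that is not carried out, and your guess that it reduces to ``a single quadratic condition'' is unsubstantiated. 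The paper avoids this computation structurally: from the four-point cevian $\overline{RP''}$ it identifies the $G$-pair partner of $P''$ as $P_2=\overline{PP'}\cap\overline{RP''}$ (the partner must lie on $F[P'']\setminus\{P''\}=\overline{PP'}$), conjugates by the vertex involution $\sigma\in G[P]$ to produce a second four-point edge $\overline{\sigma(P_2)P''}$, and thus obtains a $G$-triple with \emph{two} four-point edges, to which Proposition \ref{four-six points 1} applies directly. Second, in the symmetric case you only conjecture both the condition $a^2+3a+18=0$ and the count of exactly three orbits. The paper pins the hypothetical extra involution $\tau$ down to a one-parameter matrix family by first deriving $\tau(P_2')=P$ and $\tau(P_2)=P_2$ from the $G$-pair combinatorics on the line $\overline{PP_2'}$, then extracts $\lambda^2=4a/(6-a)$ and $a^3+a^2+12a-36=(a-2)(a^2+3a+18)=0$ from two coefficient comparisons; and, crucially, it gets the exact count $21$ not by classifying involutions but by an independent combinatorial upper bound $\delta'[2]\le 1+4\cdot 2+4\cdot 3=21$: every line through a fixed $P\in\Delta'$ containing a second quasi-Galois point is either a multiple tangent (at most four such, by Lemma \ref{four cover}) or meets $F[P]\setminus\{P\}$ in one of at most four points of $\Delta'$ (by Proposition \ref{four points}). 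Your proposal has no substitute for this upper-bound argument, so even granting your deferred computation, the assertion ``exactly three $H$-orbits'' remains unproved.

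A smaller factual slip: $a=-1$ is excluded because the curve $X^4+Y^4+Z^4-(X^2Y^2+Y^2Z^2+Z^2X^2)=0$ is \emph{singular} (at $(1:1:1)$, among other points), not because it ``carries a Galois point''; only $a=0$, the Fermat case with $\delta'[4]=3$, is ruled out by the hypothesis $\delta'[4]=0$.
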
 

\begin{proof}
Assume that there exists a quasi-Galois point $R \not \in \overline{PP'} \cup \overline{P' P''} \cup \overline{P'' P}$. 
By Proposition \ref{not in triangle}, we can assume that $\overline{R P''}$ is not a multiple tangent line. 
By Propositions \ref{four points}, \ref{three points} and \ref{two points}, it follows that there exist four quasi-Galois points on the line $\overline{RP''}$, and that the point $P_2$ given $\overline{PP'} \cap \overline{RP''}$ is a quasi-Galois point.  
For the involution $\sigma \in G[P]$, $\sigma(P_2)$ is a quasi-Galois point and the line $\overline{\sigma(P_2) P''}$ contains four quasi-Galois points. 
It follows that the triple $(P_2, \sigma(P_2), P'')$ is a $G$-triple such that two edges of the $G$-triangle contain four quasi-Galois points. 
In this case, it follows form Proposition \ref{four-six points 1} that $C$ is projectively equivalent to the curve defined by 
$$ X^4+Y^4+Z^4+a(X^2Y^2+Y^2Z^2+Z^2X^2)=0, $$
and the triangle $\overline{P_2\sigma(P_2)} \cup \overline{\sigma(P_2)P''} \cup \overline{P'' P_2}$ contains $9$ quasi-Galois points. 
By taking a suitable system of coordinates, we can assume that $P=(1:0:0)$, $P'=(0:1:0)$, $P''=(0:0:1)$, $C$ is defined by 
$$X^4+Y^4+Z^4+a(X^2Y^2+Y^2Z^2+Z^2X^2)=0 $$
for some $a \in K \setminus \{0\}$, and $\#\Delta' \cap (\overline{PP'} \cup \overline{P' P''} \cup \overline{P'' P})=9$. 

Assume that $\delta'[2] \ge 10$. 
Let $R$ be a quasi-Galois point with $R \not \in \overline{PP'} \cup \overline{P' P''} \cup \overline{P'' P}$. 
By Proposition \ref{not in triangle}, we can assume that $\overline{R P}$ is not a multiple tangent line. 
It follows from Proposition \ref{four-six points 2} that $P_2:=(0:1:1)$ and $P_2':=(0:-1:1) \in F[P]\setminus \{P\}=\{X=0\}$ are quasi-Galois. 
Note that $(P_2, P_2')$ is a $G$-pair, since $(P', P'')$ is a $G$-pair. 
We can assume that $R \in \overline{P P_2'}$ with $R \ne P, P_2'$. 
Let $\tau \in G[R]$ be the involution. 
Note that $\tau(P_2')=P$, since $(P, P_2')$ is a $G$-pair. 
Since $(P, P_2)$ and $(P_2, P_2')$ are $G$-pairs, $F[P_2]\setminus \{P_2\}=\overline{P P_2'} \ni R$, and hence, $(P_2, R)$ is a $G$-pair. 
Therefore $\tau(P_2)=P_2$. 
Since $\tau((0:1:1))=(0:1:1)$, $\tau((0:-1:1))=(1:0:0)$, and $\tau((1:0:0))=(0:-1:1)$, $\tau$ is represented by the matrix 
$$ \left(\begin{array}{ccc}
0 & \frac{2}{\lambda} & -\frac{2}{\lambda} \\
\lambda & 1 & 1 \\
-\lambda & 1 & 1 
\end{array}\right), $$
where $\lambda \in K$. 
Then $(\tau^{-1})^*F$ and $F$ are the same up to a constant. 
Here 
\begin{eqnarray*} 
(\tau^{-1})^*F&=&\left(\frac{2}{\lambda}\right)^4(Y-Z)^4+(\lambda X+Y+Z)^4+(-\lambda X+Y+Z)^4\\ & &+a\left(\frac{2}{\lambda}\right)^2(Y-Z)^2(\lambda X+Y+Z)^2+a(\lambda X+Y+Z)^2(-\lambda X+Y+Z)^2 \\ & & +a\left(\frac{2}{\lambda}\right)^2(-\lambda X+Y+Z)^2(Y-Z)^2. 
\end{eqnarray*}
The coefficient of $X^2YZ$ is 
$$ 12\lambda^2+12\lambda^2-2a\left(\frac{2}{\lambda}\right)^2\lambda^2-4a\lambda^2-2a\left(\frac{2}{\lambda}\right)^2\lambda^2=0. $$
We have $\lambda^2=4a/(6-a)$. 
The coefficient of $Y^3Z$ is 
$$ -4\left(\frac{2}{\lambda}\right)^4+4+4+4a=0. $$
We have $a^3+a^2+12a-36=0$.  
Since $a \ne 2$, we have $a^2+3a+18=0$.

On the contrary, let $a^2+3a+18=0$, and let $C$ be the plane curve given by 
$$ F=X^4+Y^4+Z^4+a(X^2Y^2+Y^2Z^2+Z^2X^2)=0.  $$
By Proposition \ref{four-six points 2}, we have $9$ quasi-Galois points on the union of the lines $X=0$, $Y=0$ and $Z=0$. 
Let $\lambda$ be a solution of $\lambda^2=4a/(6-a)$, and let $\tau$ be the involution given by
$$ \left(\begin{array}{ccc}
0 & \frac{2}{\lambda} & -\frac{2}{\lambda} \\
\lambda & 1 & 1 \\
-\lambda & 1 & 1 
\end{array}\right).  
$$
Then $\tau$ acts on $C$. 
It is inferred that $\tau$ is an involution, $\tau(\{Y+Z=0\})=\{Y+Z=0\}$, and $\tau$ is not identity on this line. 
By the proof of \cite[Proposition 2.6]{fmt}, $\tau$ is the involution of some quasi-Galois point $R$ on the line $Y+Z=0$ other than $(1:0:0)$ or $(0:-1:1)$. 
By considering the actions associated with points $(1:0:0)$, $(0:1:0)$ and $(0:0:1)$, we have four quasi-Galois points not in $\{XYZ=0\}$. 
Note that $R$ is different from $(\pm1:\pm1:1)$. 
Using the linear transformation given by $(X:Y:Z) \mapsto (Z:X:Y)$, we have $4 \times 3$ additional quasi-Galois points. 
Therefore, we have $\delta'[2] \ge 9+12=21$. 

We prove that $\delta'[2] \le 21$. 
Let $P \in \Delta'$ and let $\ell \ni P$ be a line. 
If $\#\Delta' \cap \ell \ge 2$, then $\#\Delta' \cap \ell=2, 3$ or $4$, by Propositions \ref{four points}, \ref{three points}, \ref{two points} and \ref{one point}.  
If $\#\Delta' \cap \ell=3$, then $\ell$ is a multiple tangent line, and hence, it follows from Lemma \ref{four cover} that such lines $\ell$ are at most $4$. 
If $\#\Delta' \cap \ell=2$ or $4$, then there exists a point $P' \in \ell$ such that $P' \in F[P] \setminus \{P\}$, by Proposition \ref{four points}. 
Since $\#\Delta' \cap (F[P] \setminus \{P\}) \le 4$, such lines $\ell$ are at most $4$. 
Therefore, $\delta'[2] \le 1+4 \times 2+4 \times 3=21$.  

Assume that $\Delta' \subset \overline{PP'} \cup \overline{P' P''} \cup \overline{P'' P}$ and $\delta'[2] \ge 4$.
Let $R \in \Delta' \setminus \{P, P', P''\}$.  
We can assume that $R \in \overline{P P'}$.  
By Proposition \ref{four points}, there exist four quasi-Galois points on the line $\overline{PP'}$. 
For a suitable system of coordinates, we can assume that the line $\overline{P P'}$ is defined by $Z=0$ and $C$ is defined by 
$$ X^4+Y^4+Z^4+a X^2Y^2+b(Y^2Z^2+Z^2X^2)=0, $$
where $a, b \in K$. 
By Proposition \ref{four-six points 2}, it follows that $b \ne 0$. 
In this case, $\delta'[2] \ge 5$. 
If $\delta'[2] >5$, then the line $\overline{P' P''}$ or $\overline{P''P}$ contains four quasi-Galois points. 
It follows from Proposition \ref{four-six points 1} that $a=\pm b$. 
In this case, $C$ is projectively equivalent to the curve defined by 
$$ X^4+Y^4+Z^4+a(X^2Y^2+Y^2Z^2+Z^2X^2)=0 $$
where $a \ne 0$ and $a^2+3a+18 \ne 0$, and $\delta'[2]=9$. 
For the smoothness, we need the condition $a \ne -1$. 
If $\delta'[2]=5$, then $b \ne \pm a$, by Proposition \ref{four-six points 1}.  
\end{proof}

\begin{theorem} \label{quartic} 
Let $C \subset \mathbb P^2$ be a smooth curve of degree four. 
Then
$$ \delta'[2]=0, 1, 3, 5, 6, 9, 12 \mbox{ or } 21. $$
Furthermore, the following hold. 
\begin{itemize}
\item[(1)] $\delta'[2]=21$ if and only if $C$ is projectively equivalent to the curve defined by 
$$X^4+Y^4+Z^4+a(X^2Y^2+Y^2Z^2+Z^2X^2)=0, $$
where $a \in K$ satisfies $a^2+3a+18=0$. 
\item[(2)] $\delta'[2]=12$ if and only if $C$ is projectively equivalent to the Fermat curve. 
\item[(3)] $\delta'[2]=9$ if and only if $C$ is projectively equivalent to the curve defined by 
$$ X^4+Y^4+Z^4+a(X^2Y^2+Y^2Z^2+Z^2X^2)=0, $$
where $a \in K \setminus \{0, -1\}$ and $a^2+3a+18 \ne 0$. 
\item[(4)] $\delta'[2]=6$ if and only if $C$ is projectively equivalent to the curve defined by 
$$ X^4+Y^4+Z^4+aX^2Y^2=0, $$
where $a \in K \setminus\{0\}$ and $a \ne \pm 6$. 
\item[(5)] $\delta'[2]=5$ if and only if $C$ is projectively equivalent to the curve defined by 
$$ X^4+Y^4+Z^4+aX^2Y^2+b(Y^2Z^2+Z^2X^2)=0, $$
where $a, b \in K$, $b \ne 0$ and $b \ne \pm a$.  
\end{itemize} 
\end{theorem}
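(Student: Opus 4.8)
The plan is to assemble the preceding local computations into a global count through a case analysis organized by two dichotomies: whether $C$ carries a Galois point --- equivalently, by Proposition \ref{four-six points 2} and Corollary \ref{Galois point exist}, whether some line $\ell$ satisfies $\#\Delta'\cap\ell=6$ --- and whether $C$ admits a $G$-pair, which by Proposition \ref{pair 3} is the same as admitting a $G$-triple. The cases $\delta'[2]\le 1$ contribute the values $0$ and $1$, so I assume $\delta'[2]\ge 2$ from now on. A short preliminary remark explains the absence of $2$ from the list: any two quasi-Galois points of order $2$ span a line that, by Propositions \ref{four points}, \ref{three points}, \ref{two points} and \ref{one point}, is either a $4$-secant or a bitangent; in the first case the mechanism of Propositions \ref{four points} and \ref{pair 3} produces a $G$-pair and hence a third quasi-Galois point, while in the second Proposition \ref{two points}(2) already places a third such point on the line. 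Thus $\delta'[2]\ge 2$ forces $\delta'[2]\ge 3$.

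Suppose first that some line meets $\Delta'$ in six points. Then it carries three $G$-pairs by Proposition \ref{four points}, so after normalizing one $G$-pair via Proposition \ref{pair 3} and applying Proposition \ref{four-six points 1}(2) to kill the two remaining mixed coefficients, $C$ is projectively $X^4+Y^4+Z^4+aX^2Y^2=0$. Proposition \ref{four-six points 2}(3) then yields $\delta'[2]=6$ or $12$, with the value $12$ occurring exactly for the Fermat curve. This gives assertions (2) and (4), the restrictions $a\ne 0,\pm 6$ in (4) being the non-Fermat clauses of Proposition \ref{four-six points 2}(2); the converse directions hold because each listed curve visibly has a $6$-secant line and so re-enters this regime.

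Next assume that no line carries six points, equivalently that $C$ has no Galois point, so $\delta'[4]=0$. If $C$ admits a $G$-pair, then by Proposition \ref{pair 3} it admits a $G$-triple, and Proposition \ref{triple} applies directly to give $\delta'[2]\in\{3,5,9,21\}$ with the characterizations (1), (3) and (5) and their converses. What remains is the regime in which $\delta'[2]\ge 2$ yet $C$ has neither a Galois point nor a $G$-pair; I must show this forces $\delta'[2]=3$, with the three points collinear on a bitangent, thereby completing the list and excluding all intermediate values.

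This final regime is the \textbf{main obstacle}. Absence of a $G$-pair excludes, via Propositions \ref{four points}, \ref{three points}, \ref{two points}(1) and \ref{one point}, every secant line through two points of $\Delta'$ except bitangents of the multiple-tangent type, so by Proposition \ref{two points}(2) each pair of points of $\Delta'$ lies on a bitangent carrying exactly three of them. Fixing such a bitangent $\ell$ with points $P_1,P_2,P_3$, Proposition \ref{two points}(3) furnishes an order-three automorphism $\sigma$ whose fixed locus is $\{Q_1,Q_2,R\}$ with $R\in F[P_i]\setminus\{P_i\}$ for all $i$. I would then show $\Delta'\subset\ell$: a point $P_4\notin\ell$ cannot equal $R$, since $\sigma_i(R)=R$ with $P_4=R$ would make $(P_i,R)$ a $G$-pair by Lemma \ref{pair 1}; hence $\{P_4,\sigma(P_4),\sigma^2(P_4)\}$ is a genuine three-element orbit of points of $\Delta'$ off $\ell$, and repeating the bitangent-forcing along the lines $\overline{P_iP_4}$ breeds still more points, the contradiction being drawn from the incompatibility of the resulting involutions and order-three elements with Corollary \ref{two groups} and the finiteness of $\mathrm{Aut}(C)$. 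Once $\Delta'\subset\ell$ is secured, Proposition \ref{two points} bounds $\#\Delta'\cap\ell\le 3$, giving $\delta'[2]=3$. Together with the three preceding cases this exhausts the displayed list, and I expect the intricate configuration geometry of this last case, rather than any single coordinate computation, to demand the most care.
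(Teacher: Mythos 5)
Your global architecture matches the paper's: the same two dichotomies (existence of a six-point line, i.e.\ a Galois point, via Proposition \ref{four-six points 2} and Corollary \ref{Galois point exist}; and existence of a $G$-pair, upgraded to a $G$-triple via Proposition \ref{pair 3}), the same treatment of cases (2) and (4) through the normal form $X^4+Y^4+Z^4+aX^2Y^2=0$, the same appeal to Proposition \ref{triple} for cases (1), (3), (5), and the same preliminary observation that $\delta'[2]\ge 2$ forces $\delta'[2]\ge 3$. The genuine gap is precisely in the regime you flag as the main obstacle (no Galois point, no $G$-pair) and then do not close. Your proposed contradiction --- that the orbit $\{P_4,\sigma(P_4),\sigma^2(P_4)\}$ off $\ell$ ``breeds still more points,'' incompatible with Corollary \ref{two groups} and the finiteness of ${\rm Aut}(C)$ --- cannot work as stated: $\Delta'$ is finite in every case, and ${\rm Aut}(C)$ may be large (the Klein quartic has $|{\rm Aut}(C)|=168$ and realizes $\delta'[2]=21$), so merely producing more quasi-Galois points is not self-contradictory. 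Without a sharp upper bound and an elimination of each intermediate value, the growth argument terminates at nothing.

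What the paper actually does here is quantitative, in four distinct steps, none of which appears in your sketch. (i) If $\delta'[2]\ge 4$, a point $P_4\notin\ell$ forces each line $\overline{P_iP_4}$ ($i=1,2,3$) to be a bitangent carrying exactly three quasi-Galois points, whence $\delta'[2]\ge 7$. (ii) If $\delta'[2]=7$ or $8$, the order-three automorphism $\sigma$ of Proposition \ref{two points}(3) acts on the $4$ or $5$ points of $\Delta'$ off $\ell$ and must fix one; that fixed point is the point $R\in F[P_1]$, so Lemma \ref{pair 1} produces a $G$-pair, a contradiction. (iii) If $\delta'[2]=9$, an incidence count gives $\#I=9\times 4=36$ flags and hence exactly $12$ three-point lines; $\sigma$ permutes the $11$ lines other than $\ell$ and fixes some $\ell'$, so $\ell\cap\ell'$ is a fixed point of $\sigma$, which Proposition \ref{two points} forces onto $C$; then $\ell$ and $\ell'$ are both the tangent line at that point, contradicting smoothness. (iv) If $\delta'[2]\ge 10$, Lemma \ref{four cover}(1) (the genus-one quotient argument) bounds the multiple tangent lines through a fixed $P\in\Delta'$ by four, so without a four-point line one gets $\delta'[2]\le 1+4\times 2=9$; a four-point line yields a $G$-pair by Proposition \ref{four points}, again a contradiction. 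The value $9$ is the delicate one --- it is excluded by the incidence-plus-tangency argument (iii), not by any breeding or finiteness principle --- and until you supply (ii)--(iv), the exclusion of $\delta'[2]\in\{4,7,8,9\}$ and of $\delta'[2]\ge 10$ in the pairless case, and hence the completeness of the list $0,1,3,5,6,9,12,21$, remains unproven.
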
 

\begin{proof}
Assume that $\delta'[\ge 2]\ge 2$ and $\delta'[4] \ge 1$. 
By Proposition \ref{four-six points 2} and Corollary \ref{Galois point exist}, $C$ is projectively equivalent to the curve defined by 
$$ X^4+Y^4+Z^4+aX^2Y^2=0. $$
Furthermore, if $a=0$ or $\pm 6$, then $\delta'[2]=12$ and $C$ is the Fermat curve, otherwise, $\delta'[2]=6$. 
The assertion follows. 

Hereafter, we assume that $\delta'[4]=0$ and $\delta'[2] \ge 2$.  
By Proposition \ref{four-six points 2},  there does not exist a line $\ell$ such that $\#\ell \cap \Delta'=6$. 
If two Galois points form a $G$-pair, then, by Proposition \ref{pair 3}, there exists a $G$-triple and $\delta'[2] \ge 3$. 
Therefore, $\delta'[2] \ge 3$ in any case. 
If there exists a $G$-triple, then the assertion follows by Proposition \ref{triple}. 
We can assume that there does not exist a $G$-pair. 

Assume that $\delta'[2] \ge 4$. 
By Propositions \ref{four points}, \ref{three points}, \ref{two points} and \ref{one point}, there exists a line $\ell$ containing exactly three quasi-Galois points $P_1, P_2$ and $P_3$. 
Let $P_4$ be a quasi-Galois point with $P_4 \not\in \ell$. 
Then the line $\overline{P_iP_4}$ contains exactly three quasi-Galois points for each $i$. 
Therefore, $\delta'[2] \ge 7$. 
It follows from Proposition \ref{two points} that there exists an automorphism $\sigma$ of order three such that $\sigma(\ell)=\ell$ and the fixed point of $\sigma$ not in $\ell$ coincides with the point given by $F[P_1] \cap F[P_2]$. 
If $\delta'[2]=7$ or $8$, then $\sigma$ acts on $7-3=4$ or $8-3=5$ points.  
Therefore, $\sigma$ fixes some quasi-Galois point $P$. 
Then $P \in F[P_1]$ and hence, $(P, P_1)$ is a $G$-pair. 
This is a contradiction. 
It follows that $\delta'[2] \ge 9$.
 
Assume that $\delta'[2]=9$. 
Since there does not exist a $G$-pair, the line $\overline{P_1P_2}$ contains exactly three quasi-Galois points for each pair of different quasi-Galois points $P_1$ and $P_2$. 
We consider the set 
$$ I:=\{(P, \ell) \in \Delta' \times \check{\mathbb{P}}^{2} \ | \ P \in \ell, \ \#\Delta' \cap \ell =3\} $$
with projections $p_1: I \rightarrow \Delta'$ and $p_2: I \rightarrow \check{\mathbb{P}}^2$, where $\check{\mathbb{P}}^{2}$ is the dual projective plane. 
Since $\#I=9\times4=36$ and each fiber of $p_2$ contains exactly $3$ points, it follows that $\#p_2(I)=12$. 
Let $\ell \in p_2(I)$ and let $\sigma$ be an automorphism of order three on the line $\ell$ as in Proposition \ref{two points}. 
Since the automorphism $\sigma$ acts on the set $p_2(I) \setminus \{\ell\}$ and $\#p_2(I) \setminus \{\ell\}=11$, there exists a line $\ell' \in p_2(I) \setminus \{\ell\}$ such that $\sigma(\ell')=\ell'$.  
Then $\sigma(\ell \cap \ell')=\ell \cap \ell'$. 
By Proposition \ref{two points}, the point given by $\ell \cap \ell'$ is contained in $C$. 
Since the tangent line at the point given by $\ell \cap \ell'$ coincides with lines $\ell$ and $\ell'$. 
This is a contradiction to the smoothness. 
It follows that $\delta'[2] \ge 10$. 

Assume that $\delta'[2] \ge 10$. 
Let $P \in \Delta'$. 
Since lines containing $P$ and another two quasi-Galois points are at most $4$, there exists a line containing four quasi-Galois points.  
In this case, there exists a $G$-pair, by Proposition \ref{four points}. 
This is a contradiction. 
\end{proof}

\begin{remark} 
It is known that the curve defined by 
$$ X^4+Y^4+Z^4+a(X^2Y^2+Y^2Z^2+Z^2X^2)=0, $$
where $a \in K$ satisfies $a^2+3a+18=0$, is projectively equivalent to the Klein quartic $X^3Y+Y^3Z+Z^3X=0$ (\cite{klein}, \cite{rodriguez-gonzalez}). 
\end{remark}

\begin{remark}
For $d=5$ and $n=2$, the third author determined the number $\delta[2]$ (\cite{takahashi}). 
\end{remark}

\end{document}